\title{\vspace{-0.7cm}Counting and packing Hamilton cycles in dense graphs and oriented graphs}
\author{ Asaf Ferber
\thanks{Institute of Theoretical
Computer Science ETH, 8092 Z\"urich, Switzerland. Email:
asaf.ferber@inf.ethz.ch.}
\and Michael Krivelevich \thanks{School of Mathematical Sciences,
Raymond and Beverly Sackler Faculty of Exact Sciences, Tel Aviv
University, Tel Aviv, 69978, Israel. Email: krivelev@post.tau.ac.il.
Research supported in part by USA-Israel BSF Grant 2010115 and by
grant 912/12 from the Israel Science Foundation.} \and Benny Sudakov
\thanks{Department of Mathematics, ETH, 8092 Z\"urich. Email:
benjamin.sudakov@math.ethz.ch. Research supported in part by SNSF
grant 200021-149111 and by a USA-Israel BSF grant.}}
\date{}
\newif\ifnotesw\noteswtrue
\DeclareFontFamily{OT1}{pzc}{}
\DeclareFontShape{OT1}{pzc}{m}{it}{<-> s * [1.10] pzcmi7t}{}
\DeclareMathAlphabet{\mathpzc}{OT1}{pzc}{m}{it}
\def\({\left(}
\def\){\right)}
\newtheorem{theorem}{Theorem}[section]
\newtheorem{lemma}[theorem]{Lemma}
\newtheorem{proposition}[theorem]{Proposition}
\newtheorem{corollary}[theorem]{Corollary}
\newtheorem{conjecture}[theorem]{Conjecture}
\newtheorem{definition}[theorem]{Definition}
\newtheorem{fact}[theorem]{Fact}
\numberwithin{equation}{section}
\renewcommand{\epsilon}{\varepsilon}
\newcommand{\reg}{\ensuremath{\textrm{reg}_{even}}}
\newenvironment{proof}{\noindent{\bf Proof\,}}{\hfill$\Box$}
\begin{document}
\maketitle

\begin{abstract}
We present a general method for counting and packing Hamilton cycles
in dense graphs and oriented graphs, based on permanent estimates.
We utilize this approach to  prove several extremal results. In
particular, we show that every nearly $cn$-regular oriented graph on
$n$ vertices with $c>3/8$ contains $(cn/e)^n(1+o(1))^n$ directed
Hamilton cycles. This is an extension of a result of Cuckler, who
settled an old conjecture of Thomassen about the number of Hamilton
cycles in regular tournaments. We also prove that every graph $G$ on
$n$ vertices of minimum degree at least $(1/2+o(1))n$ contains at
least $(1-o(1))\textrm{reg}_{even}(G)/2$ edge-disjoint Hamilton
cycles, where $\reg(G)$ is the maximum \emph{even} degree of a
spanning regular subgraph of $G$. This establishes an approximate
version of a conjecture of K\"uhn, Lapinskas and Osthus.

\end{abstract}

\section{Introduction}
A \emph{Hamilton cycle} in a graph or a directed graph is a cycle
passing through every vertex of the graph exactly once, and a graph
is \emph{Hamiltonian} if it contains a Hamilton cycle. Hamiltonicity
is one of the most central notions in graph theory, and has been
intensively studied by numerous researchers. Since the problem of
determining Hamiltonicity of a graph is NP-complete it is important
to find general sufficient conditions for Hamiltonicity and in the
last 60 years many interesting results were obtained in this
direction. Once Hamiltonicity is established it is very natural to
strengthen such result by showing that a graph in question has many
distinct or edge-disjoint Hamilton cycles.

In this paper we present a general approach for counting and packing
Hamilton cycles in dense graphs and oriented graphs. This approach
is based on the standard estimates for the permanent of a matrix
(the famous Minc and Van der Waerden conjectures, established by
Br\'egman \cite{Bregman}, and by Egorychev \cite{Egorychev} and by
Falikman \cite{Falikman}, respectively). In a nutshell, we use these
permanent estimates to show that an $r$-factor in a given graph or
digraph $G$ on $n$ vertices, where $r$ is linear in $n$, contains
many (edge-disjoint) 2-factors in the undirected case or 1-factors
in the directed case, whose number of cycles is relatively small
(much smaller than linear);  then these factors are converted into
many (edge-disjoint) Hamilton cycles using rotation-extension type
techniques. Strictly speaking, the permanent-based approach to
Hamiltonicity problems is not exactly new and has been used for the
first time in \cite{Alon} to bound the number of Hamilton paths in
tournaments and in \cite{FK} to pack Hamilton cycles in
pseudo-random graphs (see also \cite{GK}, \cite{KKO1}, \cite{KKO2},
\cite{K}). However, these prior papers worked in the setting of
random or pseudo-random graphs, while the present contribution
appears to be the first one where the permanent-based approach is
applied in the general, extremal graph theoretic setting.

We employ our method to prove several new extremal results and to
derive some known results in a conceptually different and easier way
as well.

One of the first and probably most celebrated sufficient conditions
for Hamiltonicity was established by Dirac \cite{Dirac} in 1952, who
proved that every graph on $n$ vertices, $n\ge 3$, with minimum
degree at least $n/2$ is Hamiltonian. The complete bipartite graph
$K_{m,m+1}$ shows that this theorem is best possible, i.e., the
minimum degree condition cannot be improved. Later, Nash-Williams
\cite{NashWilliams2} proved that any {\em Dirac graph} (that is, a
graph $G$ on $n$ vertices with minimum degree $\delta(G)\geq n/2$)
has at least $\frac{5}{224}n$ edge-disjoint Hamilton cycles. He also
asked \cite{NashWilliams1, NashWilliams2, NashWilliams3} to improve
this estimate. Clearly, $\lfloor(n+1)/4\rfloor$ is a general upper
bound on the number of edge-disjoint Hamilton cycles in a Dirac
graph obtained by considering an $n/2$ regular graph, and originally
Nash-Williams \cite{NashWilliams1} believed that this is tight.

Babai (see also \cite{NashWilliams1}) found a counterexample to this
conjecture. Extending his ideas further, Nash-Williams gave an
example of a graph on $n=4k$ vertices with minimum degree $2k$ and
with at most $\lfloor(n+4)/8\rfloor$ edge-disjoint Hamilton cycles.
He conjectured that this example is tight, i.e.,  any Dirac graph
contains at least $\lfloor(n+4)/8\rfloor$ edge-disjoint Hamilton
cycles. Moreover, Nash-Williams pointed out that the example depends
heavily on the graph being not regular. He thus also proposed the
following conjecture which has become known as the ``Nash-Williams
Conjecture":

\begin{conjecture}\label{NashWilliams}
Every $d$-regular Dirac graph contains $\lfloor d/2\rfloor$
edge-disjoint Hamilton cycles.
\end{conjecture}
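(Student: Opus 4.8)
The plan is to prove a statement equivalent to the conjecture — that every $d$-regular graph on $n$ vertices with $d\ge n/2$ decomposes into $\lfloor d/2\rfloor$ edge-disjoint Hamilton cycles plus a leftover subgraph of maximum degree at most one — by splitting into a \emph{generic} case and an \emph{extremal} case according to whether the graph is a robust outexpander.

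First, in the generic case I would invoke the Hamilton decomposition theorem for robust expanders of K\"uhn and Osthus: a $D$-regular robust $(\nu,\tau)$-outexpander on $n$ vertices with $D\ge n/2$ and $\nu\ll\tau\ll 1$ contains $\lfloor D/2\rfloor$ edge-disjoint Hamilton cycles. The permanent-based machinery of the present paper only yields a weakened, ``$(1-\varepsilon)$'' version of exactly such a statement; to reach the exact count one instead runs the robust-expansion argument in full — iteratively extracting Hamilton cycles while maintaining robust expansion, using an absorbing/rotation-extension step to convert almost-spanning structures into spanning ones, and a parity-correcting step that, when $d$ is odd, peels off a near-perfect matching so that the working degree is even. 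Thus the task reduces to $d$-regular Dirac graphs that fail to be robust outexpanders.

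Second, for such a non-expanding $G$ I would run a stability analysis: a $d$-regular graph with $d\ge n/2$ that is not a robust $(\nu,\tau)$-outexpander must, after editing at most $\varepsilon n^2$ edges, be either the disjoint union of two cliques on $n/2$ vertices or the complete bipartite graph $K_{n/2,n/2}$, these being (up to this edit distance) the only $d$-regular graphs making Dirac's bound tight. Each family is then handled by an explicit construction: in the near-two-cliques case the $O(\varepsilon n^2)$ cross edges are organized into vertex-disjoint paths used to stitch together Hamilton cycles of the two halves, where the halves are themselves robust expanders of the appropriate degree handled by the first step; in the near-bipartite case one starts from a Hamilton decomposition of $K_{n/2,n/2}$ and reroutes cycles through the sparse set of edges inside the two parts. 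In both cases the odd-$d$ parity correction and the exact bookkeeping must be carried out \emph{inside} the construction, not before it.

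The main obstacle is precisely these extremal cases. Because the conjecture is attained with equality one may not discard even a bounded number of Hamilton cycles, so the near-bipartite and near-two-cliques constructions must be pushed to optimality to the last cycle; moreover they interact delicately with parity — the matching removed to make $d$ even, or the single leftover edge at each vertex when $d$ is odd, has to be chosen compatibly with the stitching and rerouting — and with the $d=n/2$ boundary, where after any parity adjustment the minimum degree can slip to $n/2-1$ and the robust-expansion thresholds must be re-examined for each residue of $n$ modulo $4$. Organizing and verifying this interface, rather than the expander core, is where the bulk of the argument lies; the permanent-based counting developed in this paper does not by itself close it.
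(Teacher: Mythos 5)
You are attempting to prove something this paper does not prove: the statement is Conjecture \ref{NashWilliams}, the Nash-Williams conjecture, which the paper records as open and only approaches approximately — Theorem \ref{AppRegConj} gives $(1-\varepsilon)\reg(G)/2$ edge-disjoint Hamilton cycles under the strictly stronger hypothesis $\delta(G)\geq(1/2+\varepsilon)n$, via permanent estimates, $2$-factors with few cycles, and rotation-extension. So there is no proof in the paper to compare yours against; the only question is whether your argument stands on its own, and it does not.

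What you have written is a program, not a proof, and the gap sits exactly where the conjecture is hard. In the generic branch you invoke, as a black box, an exact Hamilton-decomposition theorem for regular robust outexpanders at degree $n/2$; the K\"uhn--Osthus machinery cited in this paper (\cite{KO}, \cite{KO2}) delivers the conclusion only for $d\geq(1/2+\varepsilon)n$, and extending it to the Dirac threshold is not a routine ``re-examination of thresholds'' but a substantial piece of work. In the extremal branch you assert, without proof, a stability statement (every non-expanding $d$-regular Dirac graph is $\varepsilon n^2$-close to $K_{n/2}\cup K_{n/2}$ or to $K_{n/2,n/2}$) and then merely describe the stitching and rerouting constructions, while conceding that the parity corrections, the exact count ``to the last cycle,'' and the behaviour at $d=n/2$ are left to be ``organized and verified.'' Since the conjecture is tight, these unexecuted steps are the entire content; the known resolution for large $n$ (Csaba, K\"uhn, Lo, Osthus and Treglown) indeed follows the generic/extremal split you sketch, but the bulk of that proof is precisely the case analysis you defer. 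As submitted, your argument establishes nothing beyond what it cites, and in particular the permanent-based counting of this paper cannot supply the missing exact bookkeeping, as you yourself note.
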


Recently, this conjecture was settled asymptotically by
Christofides, K\"uhn and Osthus \cite{CKO}, who proved that any
$d$-regular graph $G$ on $n$ vertices with
$d\geq(1/2+\varepsilon)n$, contains at least $(1-\varepsilon)d/2$
edge-disjoint Hamilton cycles.  For large graphs, K\"uhn and Osthus
\cite{KO2} further improved this to $\lfloor d/2\rfloor$
edge-disjoint Hamilton cycles. Even more recently, after the first version of the present paper has been submitted, Csaba, K\"uhn, Lo, Osthus and Treglown \cite{1factor} proved the exact version of the above conjecture for all large enough $n$.

For the non-regular case, K\"uhn, Lapinskas and Osthus \cite{KLO}
proved that if $\delta(G)\geq (1/2+\varepsilon)n$, then $G$ contains
at least $\textrm{reg}_{even}(n,\delta(G))/2$ edge-disjoint Hamilton
cycles where $\textrm{reg}_{even}(n,\delta)$ is the largest
\emph{even} integer $r$ such that every graph $G$ on $n$ vertices
with minimum degree $\delta(G)=\delta$ must contain an $r$-regular
spanning subgraph (an \emph{$r$-factor}). As for a concrete $G$, the
maximal \emph{even} degree $r$ of an $r$-factor of $G$, which we
denote by $\reg(G)$, can be much larger than
$\textrm{reg}_{even}(n,\delta)$. Therefore, it is natural to look
for bounds in terms of $\reg(G)$. In \cite{KO2}, K\"uhn and Osthus
showed that any graph $G$ with $\delta(G) \geq
(2-\sqrt{2}+\varepsilon)n$ contains $\reg (G)/2$ edge-disjoint
Hamilton cycles, and in \cite{KLO}, K\"uhn, Lapinskas and Osthus
conjectured the following tight result.

\begin{conjecture}\label{regconj}
Suppose $G$ is a Dirac graph. Then $G$ contains at least
$\emph{\reg}(G)/2$ edge-disjoint Hamilton cycles.
\end{conjecture}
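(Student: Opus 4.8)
The plan is to start from a maximum even-regular spanning subgraph of $G$ and turn essentially all of its $2$-factors into Hamilton cycles. Write $r=\reg(G)$ and fix an $r$-factor $H\subseteq G$, which exists by the definition of $\reg(G)$; since $r$ is even, the classical theorem of Petersen on $2$-factorizations of even-regular graphs lets us write $H=F_1\cup\cdots\cup F_{r/2}$ as an edge-disjoint union of $2$-factors. If every $F_i$ happened to be a Hamilton cycle we would be done, so the whole task is to convert each $F_i$ into a Hamilton cycle $C_i$ with the $C_i$ pairwise edge-disjoint, and---because we want the exact bound $r/2$---without discarding even one factor. Note it is enough to keep the $C_i$ edge-disjoint in $G$; they need not stay inside $H$.

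First I would arrange the initial decomposition to be ``good''. An arbitrary $2$-factor of $H$ may consist of $\Theta(n)$ short cycles, which is useless for rotation-extension, so I would invoke the permanent-based counting described earlier in the paper (the Br\'egman and van der Waerden bounds applied to the bipartite adjacency matrix of $H$) to show that a suitably chosen random $2$-factor of $H$ has only $O(\log n)$ cycles with probability bounded away from $0$. Starting from an arbitrary $2$-factorization of $H$ and repeatedly performing alternating-cycle switches between pairs of factors---a Vizing-type recolouring of the decomposition---I would then push $H$ to a $2$-factorization in which \emph{every} $F_i$ has at most $O(\log n)$ cycles simultaneously.

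Next, to merge the few cycles of a given $F_i$ into a single Hamilton cycle while preserving edge-disjointness, I would set aside, before choosing $H$, a sparse ``reservoir'' $R\subseteq G$ with good expansion, pick $H$ inside $G\setminus R$, split $R=R_1\cup\cdots\cup R_{r/2}$ into sparse pieces, and use $R_i$ together with rotation-extension moves to stitch the $O(\log n)$ cycles of $F_i$ into one: each merge consumes one crossing edge of $R_i$ plus a bounded number of rotations, and since $\delta(G)\ge n/2$ there are always many admissible crossing edges. Doing this in disjoint reservoirs keeps the resulting Hamilton cycles edge-disjoint both from one another and from the still-untouched factors $F_j$; alternatively one can try to realize all merges by alternating-path switches entirely inside $H$, which keeps the $C_i$ inside $H$ and needs no reservoir at all.

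The main obstacle---and the reason a soft version of this argument will only yield an approximate form of the conjecture---is tightness. When $G$ is (near-)regular of degree about $n/2$ one has $\reg(G)\approx\delta(G)$, so $H$ is almost all of $G$: there is essentially no room to remove a reservoir $R$, and no room to waste a single factor, so the statement becomes as strong as a Hamilton decomposition of the dense regular graph $H$ and seems to require the robust-expander and absorbing machinery developed by K\"uhn, Osthus and their coauthors rather than permanents plus rotation-extension. The realistic output of the plan above is therefore: fix a small $\varepsilon>0$, sacrifice an $\varepsilon$-fraction of the factors to build a genuine reservoir, and obtain $(1-\varepsilon)\reg(G)/2$ edge-disjoint Hamilton cycles; closing the last $\varepsilon$-gap to the exact bound $\reg(G)/2$ is precisely where the permanent method stops and must be supplemented by the heavier regular-decomposition tools.
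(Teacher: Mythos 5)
You have not proved the statement, and you say so yourself in your last paragraph: what you deliver is a plan that, at best, yields $(1-\varepsilon)\reg(G)/2$ edge-disjoint Hamilton cycles after sacrificing an $\varepsilon$-fraction of the factors to build a reservoir. That is precisely the content of Theorem \ref{AppRegConj}, not of Conjecture \ref{regconj}. The statement you were asked about is a conjecture of K\"uhn, Lapinskas and Osthus which this paper explicitly leaves open; the paper itself only proves the approximate asymptotic version, and under the stronger hypothesis $\delta(G)\ge(1/2+\varepsilon)n$ rather than for all Dirac graphs. So the ``gap'' here is the whole distance between the approximate and the exact result: nothing in your argument (nor in the paper's permanent-plus-rotation machinery) recovers the lost $\varepsilon$-fraction of factors or handles the tight case $\delta(G)=n/2$, $\reg(G)\approx\delta(G)$, where there is no room for a reservoir at all.

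Even read as a sketch of the approximate version, your plan diverges from the paper's proof at two points where the details matter. First, you propose to take a Petersen $2$-factorization of the $\reg(G)$-factor and then fix it up by alternating-cycle switches so that \emph{every} factor simultaneously has $O(\log n)$ cycles; no argument is offered that such a switching process exists or terminates, and the permanent bounds you cite only show that \emph{most} $2$-factors of a dense regular graph have at most $\sqrt{n\ln n}$ cycles. The paper avoids this entirely: it extracts the factors one at a time via Lemma \ref{Real2Factor} (Eulerian orientation plus Lemma \ref{NotTooManyCycles}), each new factor taken from the still-regular remainder, so no simultaneity claim is ever needed. Second, you set aside the reservoir $R$ first and pick $H$ inside $G-R$, without addressing why $\reg(G-R)$ stays close to $\reg(G)$; this is exactly the issue the paper's Lemma \ref{RotationsGraph} is built to solve, by carving the auxiliary expander out of a $\reg(G)$-factor through removal of random $1$-factors of an Eulerian orientation, which guarantees property $(P1)$ that the remainder is $r$-regular with $r\ge(1-\varepsilon/2)\reg(G)$. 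The paper also does not split the reservoir into $r/2$ pieces: a single auxiliary graph $H$ serves all factors, because each conversion uses only $O(\sqrt{n\ln n})$ of its edges, the deleted factor edges are returned to $H$, and properties $(P3)$--$(P5)$ are robust to the removal of $o(n^2)$ edges. If you repair these two steps you essentially reconstruct the paper's proof of Theorem \ref{AppRegConj}; Conjecture \ref{regconj} itself remains untouched.
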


Answering an open problem from \cite{KLO}, in this paper we prove an
approximate asymptotic version of this conjecture.

\begin{theorem}\label{AppRegConj}
For every $\varepsilon>0$ and a sufficiently large integer $n$ the
following holds. Every graph $G$ on $n$ vertices and with
$\delta(G)\geq (1/2+ \varepsilon)n$ contains at least
$(1-\varepsilon)\emph{\reg}(G)/2$ edge-disjoint Hamilton cycles.
\end{theorem}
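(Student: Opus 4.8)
The plan is to start from a largest regular spanning subgraph of $G$, use permanent estimates to extract from it many edge-disjoint $2$-factors each with only few cycles, and then repair every such $2$-factor into a Hamilton cycle by a rotation--extension argument, rerouting through a leftover part of $G$ that still has linear minimum degree so as to keep the resulting Hamilton cycles pairwise edge-disjoint.

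\textbf{Step 1: a large regular factor.} Set $r:=\reg(G)$, so that $r$ is even and $G$ has a spanning $r$-regular subgraph $F$. Then $r=\Omega(n)$: since $\delta(G)\ge(1/2+\varepsilon)n$, Dirac's theorem lets us peel off $\lfloor\varepsilon n/2\rfloor$ edge-disjoint Hamilton cycles one after another, and their union is a $2\lfloor\varepsilon n/2\rfloor$-regular spanning subgraph, so $r\ge\varepsilon n-2$. Fix a small constant $\beta=\beta(\varepsilon)>0$ and put $m:=\lfloor(1-\varepsilon)r/2\rfloor$; the goal is $m$ edge-disjoint Hamilton cycles in $G$.

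\textbf{Step 2: many edge-disjoint $2$-factors with few cycles (the permanent step).} I build edge-disjoint $2$-factors $L_1,\dots,L_m\subseteq F$ one at a time. If $L_1,\dots,L_j$ have already been chosen, $j<m$, then $F_j:=F\setminus(L_1\cup\dots\cup L_j)$ is $(r-2j)$-regular with $r-2j\ge\varepsilon r=\Omega(n)$; as $r-2j$ is even we may orient $F_j$ into an Eulerian digraph $D$ in which every in- and out-degree equals $d:=(r-2j)/2=\Omega(n)$. Split each vertex of $D$ into a tail and a head: the resulting $d$-regular bipartite graph has its perfect matchings in bijection with the spanning unions of directed cycles of $D$, and since $D$ is an orientation of a simple graph no such cycle has length $\le2$, so reading such a union as an undirected subgraph of $F_j$ we get a genuine $2$-factor with the same number of cycles. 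By the Egorychev--Falikman theorem (the van der Waerden conjecture) the number of these perfect matchings is at least $(d/e)^n$, while by Br\'egman's theorem the number of them extending a prescribed directed cycle through any $\ell$ specified vertices is at most $(d/e)^{\,n-\ell}n^{O(1)}$; hence the number of them giving a $2$-factor with at least $s$ disjoint $\ell$-cycles is at most $\frac{1}{s!}\bigl(nd^{\ell-1}\bigr)^{s}(d/e)^{\,n-\ell s}n^{O(1)}$. Now any $2$-factor with $\ge\beta n$ cycles contains $\ge\beta^2n/4$ vertex-disjoint cycles of a common length $\ell\le 2/\beta$; substituting $s=\beta^2n/4$ and $d=\Theta(n)$, the last quantity is $(d/e)^n$ times $\frac{C(\beta)^{s}}{s!}\,n^{O(1)}$, which is $o\!\left((d/e)^n\right)$ because $s$ is linear in $n$ and the factorial wins. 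So $F_j$ has a $2$-factor $L_{j+1}$ with fewer than $\beta n$ cycles; remove it and repeat. After $m$ steps the graph $W:=F\setminus(L_1\cup\dots\cup L_m)$ is a spanning $(r-2m)$-regular subgraph with $r-2m\ge\varepsilon r=\Omega(n)$, edge-disjoint from all the $L_i$, and it will play the role of a reservoir.

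\textbf{Step 3: turning the $2$-factors into edge-disjoint Hamilton cycles.} Delete one edge from each cycle of $L_i$ to obtain a spanning linear forest $P_i$ with fewer than $\beta n$ paths. Taking $\beta$ small compared with $\varepsilon$, a standard rotation--extension (path absorption) argument carried out inside $P_i\cup W$, which has linear minimum degree, completes $P_i$ to a Hamilton cycle $H_i$ by adding only $O(\beta n)$ edges, all of them from the reservoir $W$; these edges are then deleted from $W$ before processing $L_{i+1}$, and a routine safeguard against overloading any single vertex keeps the minimum degree of the reservoir linear throughout (the total number of reservoir edges ever spent is at most $O(\beta n)\cdot m=O(\beta n^2)\ll|E(W)|$ once $\beta\ll\varepsilon$). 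Since the $L_i$ are pairwise edge-disjoint subgraphs of $F$ and every added edge is used only once and never lies in any $L_i$, the Hamilton cycles $H_1,\dots,H_m$ are pairwise edge-disjoint; as $m\ge(1-\varepsilon)\reg(G)/2$ up to the harmless floor, this proves the theorem (at the cost of replacing $\varepsilon$ by a constant multiple of itself, which is immaterial).

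\textbf{Where the difficulty lies.} The crux is Step 2: squeezing out of a linear-degree regular graph a $2$-factor with only few---here, an arbitrarily small linear number of---cycles. A first-moment argument cannot forbid short cycles altogether, because the gap between Br\'egman's upper bound and the van der Waerden lower bound for a $d$-regular configuration is $n^{\Theta(n/d)}$: polynomial in $n$, but with an exponent that blows up as $d/n\to0$, so for the relevant regime $d=\Theta(\varepsilon n)$ it dwarfs the saving from excluding one short cycle. The fix is to aim at $\ge\beta n$ cycles instead of a single one, where the super-exponentially small factor $1/s!$ with $s=\Omega(n)$ beats every fixed polynomial. The only other delicate point is the accounting in Step 3---Hamiltonizing each $2$-factor with few reservoir edges and without draining the reservoir at any vertex---which is why the rotation--extension there must be run with some care rather than naively.
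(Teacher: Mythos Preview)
Your overall plan matches the paper's: find a large even regular factor, peel off many edge-disjoint $2$-factors with few cycles via permanent estimates, then repair each into a Hamilton cycle through a reservoir. Your Step~2 argument is correct (and a little cruder than the paper's Lemma~2.8, which gets only $\sqrt{n\ln n}$ cycles rather than $\beta n$). The genuine gap is in Step~3.

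You assert that rotation--extension ``inside $P_i\cup W$, which has linear minimum degree'' produces a Hamilton cycle. Linear minimum degree alone is nowhere near enough: P\'osa-type rotation arguments need that small sets expand to more than $n/2$ vertices, which is how one guarantees the two rotated endpoint sets intersect. Your reservoir $W$ is just the leftover $(r-2m)$-regular subgraph of the regular factor $F$, and it carries no expansion guarantee whatsoever. Concretely, if $G$ is close to two cliques joined by a sparse (but $\Theta(\varepsilon n)$-regular) bipartite part, the $2$-factors $L_i$ may absorb essentially all the crossing edges of $F$, leaving $W$ with a tiny edge-cut; then $P_i\cup W$ is far from Hamiltonian, no matter how cleverly you rotate. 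The ``routine safeguard against overloading any single vertex'' is not routine either: in rotation--extension you do not control which vertices lose reservoir edges, and you have given $W$ no robust edge-density property to fall back on.

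This is exactly the point the paper works hardest on. Its reservoir $H$ (Lemma~2.11) is built to contain \emph{all} of $G-R$ together with $\Theta(\varepsilon n)$ random $1$-factors of an Eulerian orientation of $R$; a permanent calculation then shows that between any set $S$ of size $\alpha n$ and any set $T$ of size $(1-\varepsilon)n/2$ one has $|E_H(S,T)|\ge \beta_0 n^2$, hence $|N_{H-E'}(S)|\ge (1/2+\varepsilon/4)n$ even after deleting any $o(n^2)$ edges. That expansion is precisely the hypothesis of the rotation lemma (Lemma~2.12). The $o(n^2)$ budget is why the paper needs the stronger $\sqrt{n\ln n}$ cycle bound: $m\cdot O(\sqrt{n\ln n})=o(n^2)$, so the reservoir's expansion survives the entire process automatically. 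With your $\beta n$ cycles per $2$-factor you would spend $\Theta(\beta n^2)$ reservoir edges, which could in principle be handled by taking $\beta\ll\beta_0$ --- but only after you have a $\beta_0$-type density statement for the reservoir, and you have not established one for $W$. In short, you have located the difficulty in the wrong step: the permanent step is the easy part; constructing and maintaining a reservoir with the right expansion is the crux.
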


Given a graph $G$, let $h(G)$ denote the number of distinct Hamilton
cycles in $G$. Strengthening Dirac's theorem S\'ark\"ozy, Selkow and
Szemer\'edi \cite{SSS} proved that every Dirac graph $G$ contains
not only one but  at least $c^nn!$ Hamilton cycles for some small
positive constant $c$. They also conjectured that $c$ can be
improved to $1/2-o(1)$. This has later been proven by Cuckler and
Kahn \cite{CK}. In fact, Cuckler and Kahn proved a stronger result:
every Dirac graph $G$ on $n$ vertices with minimum degree
$\delta(G)$ has $h(G)\geq
\left(\frac{\delta(G)}{e}\right)^{n}(1-o(1))^{n}$. The random graph
$G(n,p)$ with $p>1/2$ shows that this estimate is sharp (up to the
$(1-o(1))^n$ factor). Indeed in this case with high probability
$\delta(G(n,p))=pn+o(n)$ and the expected number of Hamilton cycles
is $p^n(n-1)!<(pn/e)^n$.

To illustrate our techniques we  prove the following proposition
which gives a lower bound on the number of Hamilton cycles in a
dense graph $G$ in terms of $\textrm{reg}(G)$, where
$\textrm{reg}(G)$ is the maximal $r$ for which $G$ contains an
$r$-factor. Although this bound is asymptotically tight for nearly
regular graphs, it is weaker than the result of Cuckler and Kahn in
general. On the other hand, since every Dirac graph contains an
$r$-factor with $r$ about $n/4$ (see \cite{KAT}), our bound implies
the result of S\'ark\"ozy, Selkow and Szemer\'edi mentioned above.

\begin{proposition} \label{warmup}
Let $G$ be a graph on $n$ vertices with minimum degree
$\delta(G)\geq n/2$. Then the number of Hamilton cycles in $G$ is at
least $\left(\frac{\textrm{\emph{reg}}(G)}{e}\right)^n(1-o(1))^n$.
\end{proposition}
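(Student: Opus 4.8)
The plan is to realize $\textrm{reg}(G)$ by a concrete $r$-factor and then run the permanent-to-Hamilton-cycle machinery outlined in the introduction. Write $r=\textrm{reg}(G)$ and fix an $r$-factor $F\subseteq G$; since $\delta(G)\ge n/2$, the result of \cite{KAT} guarantees $r=\Omega(n)$, which I will use freely. Let $A$ be the adjacency matrix of $F$. The permanent $\textrm{per}(A)$ counts permutations $\sigma$ of $V(G)$ with $v\sigma(v)\in E(F)$ for every $v$; forgetting orientations and collapsing the $2$-cycles of $\sigma$ to single edges, each such $\sigma$ yields a spanning subgraph of $G$ which is a disjoint union of cycles (of length $\ge 3$) and of single edges. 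Since $A/r$ is doubly stochastic, the Egorychev--Falikman theorem gives $\textrm{per}(A)\ge n!\,(r/n)^n\ge (r/e)^n$, so there are at least $(r/e)^n$ such permutations.

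The first task is to show that almost all of these permutations have few cycles. Put $m_0:=n/\log^2 n$. If $\sigma$ has exactly $m$ cycles, delete from each cycle the edge leaving its minimum-labelled vertex; this produces $m$ vertex-disjoint directed paths spanning $V(G)$, all of whose edges lie in $F$, and $\sigma$ is recovered by re-closing each path from its end to its (unique, minimal) start. Hence the number of such $\sigma$ with $m$ cycles is at most the number of such path systems, which --- choosing the $m$ path-ends, the $m$ path-starts, and then the partial injection on the remaining $n-m$ vertices through $F$, and bounding this by Br\'egman's theorem applied to the corresponding $(n-m)\times(n-m)$ submatrix of $A$ --- is at most $\binom{n}{m}^2 (r!)^{(n-m)/r}\le (r/e)^n e^{O(\log n)}\bigl(\tfrac{e^3 n^2}{m^2 r}\bigr)^{m}$. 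Since $r=\Omega(n)$, for every $m\ge m_0$ the bracketed quantity is at most $1/2$, so summing over $m\ge m_0$ bounds the number of ``bad'' permutations by $(r/e)^n e^{O(\log n)}2^{-m_0+1}=o\bigl((r/e)^n\bigr)$. Consequently at least $(1-o(1))(r/e)^n$ permutations $\sigma$ have fewer than $m_0$ cycles.

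Next, each such $\sigma$ underlies a spanning subgraph of $G$ that is a disjoint union of fewer than $m_0$ cycles and edges, and a standard rotation--extension argument --- available because $\delta(G)\ge n/2$ --- turns it into a Hamilton cycle of $G$ by modifying only $O(m_0)=o(n/\log n)$ edges (merge the components one at a time, each merge using only a bounded-length rotation sequence, and finally close up the resulting Hamilton path). It remains to bound the overcounting: a fixed Hamilton cycle $H$ can arise from at most $e^{O(t\log n)}$ permutations, where $t=o(n/\log n)$ is the number of modified edges --- one chooses which $\le t$ edges of $H$ to delete and which $\le t$ edges to add back (at most $\binom{n}{\le t}\binom{\binom n2}{\le t}=e^{O(t\log n)}$ ways) and then an orientation of each of the at most $m_0$ resulting cycle-components (at most $2^{m_0}$ ways). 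Since $e^{O(t\log n)}=(1+o(1))^n$, this yields $h(G)\ge (1-o(1))(r/e)^n/(1+o(1))^n=\bigl(\textrm{reg}(G)/e\bigr)^n(1-o(1))^n$, as required.

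The main delicate point is the calibration of $m_0$: it must be large enough that the Br\'egman estimate above shows few-cycle permutations dominate, yet small enough that the rotation--extension conversion changes only $o(n/\log n)$ edges, keeping the overcounting factor sub-exponential; the choice $m_0=n/\log^2 n$ threads this needle. A secondary point requiring some care is verifying that the rotation--extension step indeed merges all components with only $O(m_0)$ total edge changes (the awkward configuration being two near-balanced components spanning $V(G)$), but this is routine given $\delta(G)\ge n/2$.
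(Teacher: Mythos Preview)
Your argument is correct and follows the paper's overall scheme exactly: pass to a $\textrm{reg}(G)$-factor, use the Van der Waerden bound to count permutations contained in its adjacency matrix, show that permutations with many cycles are negligible, merge the remaining few-cycle $2$-factors into Hamilton cycles via rotation--extension (the paper's Lemma~\ref{rotations-dense}), and finally control the overcounting. The one genuine point of difference is \emph{how} you show that many-cycle permutations are rare. The paper's Lemma~\ref{NotTooManyCycles} argues that a permutation with $s>\sqrt{n\ln n}$ cycles must, by pigeonhole, contain at least $\tfrac{1}{4}\ln n$ cycles of some common short length $\ell\le 2\sqrt{n/\ln n}$, then bounds the number of ways to lay down those $j$ cycles and applies Br\'egman to the remaining $(n-j\ell)\times(n-j\ell)$ submatrix. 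Your path-system encoding is more direct: record only the $m$ starts and $m$ ends, and apply Br\'egman once to a single $(n-m)\times(n-m)$ submatrix to get $f(A,m)\le\binom{n}{m}^2(r!)^{(n-m)/r}$. This buys a shorter argument at the cost of a looser threshold ($m_0=n/\log^2 n$ versus $\sqrt{n\ln n}$), which here is harmless since the resulting overcount $e^{O(m_0\log n)}=e^{o(n)}$ is still $(1+o(1))^n$. In fact your inequality $e^3n^2/(m^2 r)\le 1/2$ already holds once $m\ge C\sqrt{n}$, so your method recovers a threshold of the same order as the paper's if one cares to optimise. One cosmetic slip: in your encoding the minimum-labelled vertex of each cycle becomes the \emph{end} of its path, not the start.
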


Proposition \ref{warmup} implies that, given a dense regular graph
$G$, the number of Hamilton cycles in $G$ is asymptotically exactly
(in exponential terms) what we expect in a random graph with the
same edge density.

\begin{corollary} \label{warmup2}
Let $c\geq1/2$ and let $G$ be a graph on $n$ vertices which is
$cn$-regular. Then $$h(G)=\left(\frac{cn}{e}\right)^n(1+o(1))^n.$$
\end{corollary}

Using a technical lemma from \cite{CKO}, in Section \ref{sec::tools}
we show that given an almost regular graph $G$ on $n$ vertices with
$\delta(G)\geq n/2+\varepsilon n$, $G$ contains an $r$-factor with
$r$ very close to $\delta(G)$. Therefore, we conclude that if the
minimum degree of $G$ is at least $n/2+\varepsilon n$, then
condition $(ii)$  in Corollary \ref{warmup2} can be relaxed to the
requirement that $G$ is ``almost regular". Before stating it
formally, we introduce the following notation: whenever we want to
write that $x$ lies in the interval between $a-b$ and $a+b$, we
simply write $x\in (a\pm b)$.

\begin{corollary} \label{almostregular}
For every $c>1/2$ there exists $\varepsilon>0$ such that for large
enough integer $n$ the following holds. Suppose that:
\begin{enumerate}[$(i)$]
\item $G$ is a graph on $n$ vertices, and
\item $d(v)\in (c\pm \varepsilon)n$ for every $v\in V$.
\end{enumerate}
Then $h(G)\in \left(\frac{(c\pm \varepsilon')n}{e}\right)^n$, where
$\epsilon'(\epsilon) = \epsilon'$ is a specific function of
$\epsilon$ tending to $0$ with $\epsilon$.
\end{corollary}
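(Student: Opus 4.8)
The plan is to prove matching lower and upper bounds on $h(G)$, where the lower bound comes from Proposition~\ref{warmup} together with the existence of a near-optimal factor in almost regular dense graphs, and the upper bound comes from Br\'egman's permanent estimate applied to the adjacency matrix of $G$; the stated conclusion then follows by choosing $\varepsilon'$ to dominate the errors in both directions.

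For the lower bound, I would first fix $c>1/2$ and choose $\varepsilon>0$ small enough that $(c-\varepsilon)n \ge n/2 + \varepsilon'' n$ for some fixed $\varepsilon''=\varepsilon''(c)>0$; then condition $(ii)$ forces $\delta(G)\ge (c-\varepsilon)n \ge n/2+\varepsilon'' n$. Invoking the statement established in Section~\ref{sec::tools} (via the technical lemma of Christofides, K\"uhn and Osthus \cite{CKO}), $G$ then contains an $r$-factor with $r$ within $o(n)$ of $\delta(G)$, so $\textrm{reg}(G)\ge (c-\varepsilon)n - o(n) \ge (c-\varepsilon')n$ for a suitable $\varepsilon'$ tending to $0$ with $\varepsilon$. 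Proposition~\ref{warmup} now yields $h(G)\ge \left(\frac{\textrm{reg}(G)}{e}\right)^n(1-o(1))^n \ge \left(\frac{(c-\varepsilon')n}{e}\right)^n$, after enlarging $\varepsilon'$ slightly to absorb the $(1-o(1))^n$ factor.

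For the upper bound I would pass to directed Hamilton cycles: each Hamilton cycle of $G$ corresponds to exactly two cyclic permutations $\sigma$ of $V(G)$ with $\{v,\sigma(v)\}\in E(G)$ for every $v$, and the number of \emph{all} permutations with this support property is exactly the permanent $\mathrm{per}(A)$ of the adjacency matrix $A$ of $G$, so $2\,h(G)\le \mathrm{per}(A)$. Since $A$ is a $0/1$ matrix with zero diagonal and $v$-th row sum $d(v)=(c\pm\varepsilon)n$, Br\'egman's theorem (the Minc conjecture) gives $\mathrm{per}(A)\le \prod_{v\in V}(d(v)!)^{1/d(v)}$, and Stirling's formula gives $(d(v)!)^{1/d(v)}=(1+o(1))\,d(v)/e \le (1+o(1))\,(c+\varepsilon)n/e$. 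Hence $h(G)\le \tfrac12\,\mathrm{per}(A)\le \left(\frac{(c+\varepsilon')n}{e}\right)^n$ with $\varepsilon'$ again tending to $0$ with $\varepsilon$.

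Combining the two bounds and taking $\varepsilon'$ to be the larger of the two constants produced above gives $h(G)=\left(\frac{(c\pm\varepsilon')n}{e}\right)^n$. The genuinely nontrivial input is the near-optimal factor result feeding Proposition~\ref{warmup}: the upper bound is essentially immediate from Br\'egman's inequality plus Stirling, so the only real obstacle — and one already dealt with in Section~\ref{sec::tools} — is to show that an almost regular graph with minimum degree above $n/2+\varepsilon'' n$ has $\textrm{reg}(G)$ within $o(n)$ of its minimum degree.
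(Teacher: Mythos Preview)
Your approach is essentially the same as the paper's: the lower bound comes from the CKO almost-regular factor lemma (Corollary~\ref{cor:AlmostRegtoReg}) feeding into Proposition~\ref{warmup}, and the upper bound from Br\'egman's theorem applied to the adjacency matrix, exactly as in Corollary~\ref{warmup2}. One small imprecision: the $r$-factor guaranteed by the CKO lemma has $r\ge (c-\varepsilon-\sqrt{\varepsilon})n$, so the gap to $\delta(G)$ is $\Theta(\sqrt{\varepsilon}\,n)$ rather than $o(n)$ --- but since this still yields $\textrm{reg}(G)\ge (c-\varepsilon')n$ with $\varepsilon'=\varepsilon+\sqrt{\varepsilon}\to 0$ as $\varepsilon\to 0$, your conclusion is unaffected.
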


An \emph{oriented} graph $G$ is a graph obtained by orienting the
edges of a simple graph. That is, between every unordered pair of
vertices $\{x,y\}\subseteq V(G)$ there exists at most one of the
(oriented) edges $xy$ or $yx$. Hamiltonicity problems in oriented
graphs are usually much more challenging. Given an oriented graph
$G$, let $\delta^+(G)$ and $\delta^-(G)$ denote the minimum
\emph{outdegree} and \emph{indegree} of the vertices in $G$,
respectively. We also use the notation $d^{\pm}(v)\in (a\pm b)$ for
the statement that both $d^+(v)$ and $d^-(v)$ lie between $a-b$ to
$a+b$. In addition, we set
$\delta^{\pm}(G)=\min\{\delta^+(G),\delta^-(G)\}$ and refer to it as
the \emph{semi-degree} of $G$. In the late 70's Thomassen \cite{Tho}
raised the natural question of determining the minimum semi-degree
that ensures the existence of a Hamilton cycle in an oriented graph
$G$. H\"aggkvist \cite{Haggkvist} found a construction which gives a
lower bound of $\frac{3n-4}{8}-1$. The problem was resolved only
recently by Keevash, K\"uhn and Osthus \cite{KKO}, who proved that
every oriented graph $G$ on $n$ vertices with $\delta^{\pm}(G)\geq
\frac{3n-4}{8}$ contains a Hamilton cycle.

Counting Hamilton cycles in tournaments is another very old problem
which goes back some seventy years to one of the first applications
of the probabilistic method by Szele \cite{Sz}. He proved that there
are tournaments on $n$ vertices with at least $(n-1)!/2^n$ Hamilton
cycles. Alon \cite{Alon} showed that this result is nearly tight and
every $n$ vertex tournament has at most $O(n^{3/2}(n-1)!/2^n)$
Hamilton cycles. Thomassen \cite{Tho1} and later Friedgut and Kahn
\cite{FrK} conjectured that the randomness is unnecessary in Szele's
result and that in fact every regular tournament contains at least
$n^{(1-o(1))n}$ Hamilton cycles. This conjecture was solved by
Cuckler \cite{Cuckler} who proved that every regular tournament on
$n$ vertices contains at least $\frac{n!}{(2+o(1))^n}$ Hamilton
cycles. The following theorem substantially extends Cuckler's result
\cite{Cuckler}.

\begin{theorem} \label{CountingHamOriented}
For every $c>3/8$ and every $\eta>0$ there exists a positive
constant $\varepsilon:=\varepsilon(c,\eta)>0$ such that for every
sufficiently large integer $n$ the following holds. Suppose that:
\begin{enumerate}[(i)]
\item $G$ is an oriented graph on $n$ vertices, and
\item $d^\pm(v)\in
(c\pm \varepsilon)n$ for every $v\in V(G)$.
\end{enumerate}

Then $h(G)\in \left(\frac{(c\pm\eta)n}{e}\right)^n$. In particular,
if $G$ is $cn$-regular, then
$h(G)=\left(\frac{(c+o(1))n}{e}\right)^n.$
\end{theorem}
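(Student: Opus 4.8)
The plan is to establish the two bounds $h(G)\le\left(\frac{(c+\eta)n}{e}\right)^{n}$ and $h(G)\ge\left(\frac{(c-\eta)n}{e}\right)^{n}$ separately. The upper bound is immediate from the permanent: a Hamilton cycle of $G$ is in particular a fixed-point-free permutation $\sigma$ of $V(G)$ with $\sigma(v)\in N^{+}(v)$, so $h(G)\le\mathrm{per}(A(G))$, where the row sums of the adjacency matrix $A(G)$ are the out-degrees $d^{+}(v)\le(c+\varepsilon)n$. By Br\'egman's theorem $\mathrm{per}(A(G))\le\prod_{v}(d^{+}(v)!)^{1/d^{+}(v)}$, and since $(r!)^{1/r}=\frac{r}{e}\bigl(1+O(\tfrac{\log r}{r})\bigr)$ by Stirling this is at most $\left(\frac{(c+\varepsilon)n}{e}\right)^{n}n^{O(1)}$, which is below $\left(\frac{(c+\eta)n}{e}\right)^{n}$ once $\varepsilon<\eta$ and $n$ is large. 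It is worth recording here that \emph{both} Br\'egman's bound and the Van der Waerden bound used below are tight up to a polynomial factor, so the exponential base $cn/e$ is pinned down exactly and all polynomial losses can be absorbed into the $(1\pm\eta)^{n}$ error.

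For the lower bound I would first pass to a regular digraph. Viewing $G$ as a bipartite graph on $V^{+}\cup V^{-}$ (with $u^{+}v^{-}$ an edge iff $uv\in E(G)$) and applying the bipartite/directed form of the $r$-factor extraction of \cite{CKO}, the near-regularity $d^{\pm}(v)=(c\pm\varepsilon)n$ yields a spanning subdigraph $D\subseteq G$ in which every in- and out-degree equals some $d\ge(c-\varepsilon')n$, with $\varepsilon'\to0$ as $\varepsilon\to0$; note $D$ is again an oriented graph. Since $\tfrac1d A(D)$ is doubly stochastic, the Egorychev--Falikman bound gives that the number of $1$-factors of $D$ (spanning unions of vertex-disjoint directed cycles) is $\mathrm{per}(A(D))\ge d^{n}\tfrac{n!}{n^{n}}\ge\left(\frac{d}{e}\right)^{n}\sqrt{2\pi n}$. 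I would then show that almost all of these have few cycles: for a fixed directed $\ell$-cycle $C$, the number of $1$-factors containing $C$ equals $\mathrm{per}(A(D-V(C)))\le\left(\frac{d}{e}\right)^{n-\ell}n^{O(1)}$ (Br\'egman, as the row sums of $A(D-V(C))$ are at most $d$), so a uniformly random $1$-factor of $D$ contains $C$ with probability at most $(e/d)^{\ell}n^{O(1)}$. As $D$ has at most $nd^{\ell-1}/\ell$ directed $\ell$-cycles, the expected number of cycles of length at most $\ell_{0}$ in a random $1$-factor is at most $n^{1/(2c)-1/2+o(1)}\sum_{\ell\le\ell_{0}}e^{\ell}/\ell$ (the polynomial exponent being exactly the Br\'egman--Van der Waerden gap), which is $o(n/\ell_{0})$ for $\ell_{0}=\alpha\log n$ with a small enough constant $\alpha=\alpha(c)>0$, since $c>3/8>1/3$ makes $\tfrac1{2c}-\tfrac12<1$. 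As a $1$-factor with $k$ cycles has fewer than $n/\ell_{0}$ cycles longer than $\ell_{0}$, Markov's inequality shows all but a $o(1)$-fraction of the $1$-factors of $D$ have at most $m:=O(n/\log n)$ cycles; in particular the number of such ``good'' $1$-factors is at least $\left(\frac{d}{e}\right)^{n}$.

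Next I would convert each good $1$-factor into a Hamilton cycle. Given a good $1$-factor $F$ with cycles $C_{1},\dots,C_{t}$ ($t\le m$), delete one suitably chosen edge of each $C_{i}$ to obtain $t$ vertex-disjoint directed paths covering $V(G)$, and then join these paths, using edges of $G$, into a single Hamilton cycle $\Phi(F)$. The key point is that $\Phi$ is at most $\exp(o(n))$-to-one: to recover $F$ from $H=\Phi(F)$ it suffices to specify which $t\le m$ of the $n$ edges of $H$ are the joining edges, since deleting them from $H$ returns the $t$ paths and each $C_{i}$ is then forced (it is the unique cycle on that path's vertex set containing the path), and there are only $\sum_{t\le m}\binom{n}{t}\le n\binom{n}{m}=\exp\!\bigl(O(\tfrac{n\log\log n}{\log n})\bigr)=\exp(o(n))$ possibilities for that subset. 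Hence $h(G)\ge\left(\frac{d}{e}\right)^{n}\exp(-o(n))\ge\left(\frac{(c-\varepsilon')n}{e}\right)^{n}\exp(-o(n))\ge\left(\frac{(c-\eta)n}{e}\right)^{n}$ once $\varepsilon$ (hence $\varepsilon'$) is small and $n$ large; combined with the upper bound this gives $h(G)=\left(\frac{(c\pm\eta)n}{e}\right)^{n}$, and the $cn$-regular case is recovered by taking $D=G$.

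The crux --- and the only genuinely hard step --- is the joining procedure. In the undirected world the hypothesis $\delta(G)\ge(\tfrac12+\varepsilon)n$ makes merging disjoint cycles (or inserting a sublinear path system into a Hamilton cycle) routine, but here the relevant semi-degree is only just above $3n/8$, so a ``local'' merge of two prescribed cycles need not exist at all; this is exactly the regime of Keevash--K\"uhn--Osthus. The plan is therefore to prove an assembly lemma of the shape \emph{if $\mathcal{C}$ is a family of $o(n)$ vertex-disjoint directed cycles partitioning $V(G)$, then one can delete a single edge from each cycle of $\mathcal{C}$ so that the resulting path system extends to a Hamilton cycle of $G$}, by adapting the stability/regularity analysis behind the $\delta^{\pm}\ge\frac{3n-4}{8}$ theorem of \cite{KKO} and exploiting the extra room afforded by $c>3/8$. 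Everything else in the argument is permanent estimates together with bookkeeping of polynomial factors.
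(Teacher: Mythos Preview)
Your skeleton --- Br\'egman for the upper bound, Van der Waerden for a lower count of $1$-factors, a pruning to $1$-factors with few cycles, then a merge-and-count to Hamilton cycles --- matches the paper's. The genuine gap is exactly where you flag it: the ``assembly lemma''. You leave it as a black box and propose to adapt the Keevash--K\"uhn--Osthus stability analysis, but the paper does \emph{not} merge the cycles of a $1$-factor directly inside $G$. Instead it sets aside, \emph{before} passing to a regular factor, a random reservoir $V_0\subset V(G)$ of size $n^{2/3}$ with the property that every vertex of $G$ has $(c\pm 2\varepsilon)|V_0|$ in- and out-neighbours in $V_0$ (Lemma~\ref{PartitioningOriented}). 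The $(c-\eta)n'$-factor and the $1$-factors are then taken in $G_2=G[V\setminus V_0]$; two cycles of a $1$-factor are merged by breaking an edge of each and routing through $V_0$, using that $G[V_0]$ has directed diameter at most $4$ (Fact~\ref{fact2}). Once a single cycle $C\supseteq V(G_2)$ is reached, the remaining vertices of $V_0$ are absorbed in one shot: break $C$ at an edge $vu$ and apply Lemma~\ref{OrientedHamPathBetweenTwoSets} (itself a quick corollary of the robust-outexpander Hamiltonicity theorem) to find a Hamilton path of $G[V_0]$ from $N^+(v)\cap V_0$ to $N^-(u)\cap V_0$. This reservoir/absorber trick is the missing idea, and it sidesteps any stability analysis entirely.

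Two knock-on issues. First, your cycle bound $m=O(n/\log n)$ is too weak for this scheme: each merge consumes $O(1)$ reservoir vertices, so one needs $s=o(|V_0|)$, while the bookkeeping forces $|V_0|\log n=o(n)$ (otherwise the factor $(e/d)^{|V_0|}$ lost by counting $1$-factors only on $n-|V_0|$ vertices is not $(1-o(1))^n$). These two constraints are incompatible with $s=\Theta(n/\log n)$. The paper's Lemma~\ref{NotTooManyCycles} gets $s\le\sqrt{n\ln n}$ by a sharper permanent argument --- rather than bounding the expected number of short cycles, it fixes $\Theta(\log n)$ cycles of a common short length and upper-bounds the permanent of the residual submatrix using that its average row sum has genuinely dropped --- and this is what makes $|V_0|=n^{2/3}$ work. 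Second, your regular-factor step does not follow from the bipartite analogue of \cite{CKO}: that lemma requires $\delta\ge n/2$, which on the bipartite double cover would mean $c\ge 1/2$, not $c>3/8$. The paper instead shows $G_2$ is a robust $(\nu,\tau)$-outexpander via Lemma~\ref{KO3/8Robust} (this is where $c>3/8$ is used) and then extracts a $(c-\eta)n'$-factor via Theorem~\ref{RobustRFactor}, which rests on the degree-prescription Lemma~\ref{lemma:KO} of K\"uhn--Osthus.
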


The bound on in/out-degrees in this theorem is tight. This follows from
the construction of  H\"aggkvist \cite{Haggkvist} (mentioned above), which shows that there are $n$-vertex oriented graphs with
all in/outdegrees $(3/8-o(1))n$ and no Hamilton cycles.

\vspace{0.25cm} \noindent {\bf Definitions and notation:} \, Our
graph-theoretic notation is standard and follows that of
\cite{West}. For a graph $G$, let $V=V(G)$ and $E=E(G)$ denote its
sets of vertices and edges, respectively. For subsets $U,W \subseteq
V$, and for a vertex $v \in V$, we denote by $E_G(U)$ all the edges
of $G$ with both endpoints in $U$, by $E_G(U,W)$ all the edges of
$G$ with one endpoint in $U$ and one endpoint in $W$, and by
$E_G(v,U)$ all the edges with one endpoint being $v$ and one
endpoint in $U$. We write $N_G(v)$ for the neighborhood of $v$ in
$G$ and $d_G(v)$ for its degree. For an oriented graph $G$ we write
$uv$ for the edge directed from $u$ to $v$. We denote by $N_G^+(v)$
and $N_G^-(v)$ the \emph{outneighborhood} and \emph{inneighborhood}
of $v$, respectively, and write $d_G^+(v)=|N_G^+(v)|$ and
$d_G^-(v)=|N_G^-(v)|$. We will omit the subscript $G$ whenever there
is no risk of confusion. We will denote the minimum outdegree by
$\delta^+(G)$ and the minimum indegree by $\delta^-(G)$, and set
$\delta^{\pm}(G)=\min\{\delta^+(G),\delta^-(G)\}$. Finally we write
$a=(b\pm c)$ for $a\in (b-c,b+c)$.

For the sake of simplicity and clarity of presentation, and in order
to shorten some of our proofs, no real effort has been made here to
optimize the constants appearing in our results. We also omit floor
and ceiling signs whenever these are not crucial. Most of our
results are asymptotic in nature and whenever necessary we assume
that the underlying parameter $n$ is sufficiently large.

\section{Tools}\label{sec::tools}

In this section we introduce the main tools to be used in the proofs
of our results.

\subsection{Probabilistic tools}

We will need to employ bounds on large deviations of random
variables. We will mostly use the following well-known bound on the
lower and the upper tails of the Binomial distribution due to
Chernoff (see \cite{AS}, \cite{JLR}).

\begin{lemma}\label{Che}
If $X \sim \emph{\text{Bin}}(n,p)$, then
\begin{itemize}
    \item $\Pr\left(X<(1-a)np\right)<e^{-a^2np/2}$ for every
    $a>0$;
    \item $\Pr\left(X>(1+a)np\right)<e^{-a^2np/3}$ for every $0<a<3/2.$
\end{itemize}
\end{lemma}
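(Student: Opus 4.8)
\textbf{Proof proposal for Lemma~\ref{Che}.}
The plan is to derive both tails from the standard Chernoff bounding technique: for a sum $X=\sum_{i=1}^n X_i$ of independent indicator variables with $\Pr(X_i=1)=p$, apply Markov's inequality to the exponential moment $e^{tX}$ for a suitable real parameter $t$, using the independence to factor $\mathbb{E}[e^{tX}]=\prod_i \mathbb{E}[e^{tX_i}]=(1-p+pe^t)^n$.

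For the upper tail, fix $0<a<3/2$ and take $t=\ln(1+a)>0$. Then $\Pr(X>(1+a)np)\le e^{-t(1+a)np}\,\mathbb{E}[e^{tX}] = \bigl((1-p+p(1+a))^n\bigr)e^{-t(1+a)np} \le e^{np(e^t-1)}e^{-t(1+a)np}$, where I used $1-p+p(1+a)=1+ap\le e^{ap}=e^{p(e^t-1)}$. Substituting $t=\ln(1+a)$ gives the bound $\bigl(e^{a}/(1+a)^{1+a}\bigr)^{np}$; it then suffices to check the elementary inequality $e^a/(1+a)^{1+a}\le e^{-a^2/3}$, equivalently $(1+a)\ln(1+a)-a\ge a^2/3$, on the interval $0<a<3/2$. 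This is a one-variable calculus estimate: the function $f(a)=(1+a)\ln(1+a)-a-a^2/3$ satisfies $f(0)=0$ and one checks $f'(a)=\ln(1+a)-\tfrac{2a}{3}\ge 0$ on $(0,3/2)$ (indeed $f'(0)=0$ and $f''(a)=\tfrac1{1+a}-\tfrac23$ is positive for $a<1/2$ and, while negative for $a>1/2$, one verifies $f'(3/2)=\ln(5/2)-1>0$, so $f'$ stays nonnegative throughout), whence $f\ge 0$.

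For the lower tail, fix $a>0$; we may assume $a<1$ since otherwise the event $\{X<(1-a)np\}$ with $a\ge1$ forces $X<0$ and the statement is vacuous (and the claimed bound $e^{-a^2np/2}$ is $\le 1$ anyway). Take $t=\ln(1-a)<0$ and apply Markov to $e^{tX}$: $\Pr(X<(1-a)np)=\Pr(e^{tX}>e^{t(1-a)np})\le e^{-t(1-a)np}(1-p+pe^t)^n \le e^{-t(1-a)np}e^{np(e^t-1)}$, again using $1+p(e^t-1)\le e^{p(e^t-1)}$. With $t=\ln(1-a)$ this equals $\bigl(e^{-a}/(1-a)^{1-a}\bigr)^{np}$, so it remains to verify $e^{-a}/(1-a)^{1-a}\le e^{-a^2/2}$, i.e.\ $(1-a)\ln(1-a)+a\ge a^2/2$ for $0<a<1$. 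Writing $g(a)=(1-a)\ln(1-a)+a-a^2/2$, one has $g(0)=0$, $g'(a)=-\ln(1-a)-a=\sum_{k\ge2}a^k/k\ge 0$, which settles it.

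The steps are all routine; the only mildly delicate point is the one-variable inequality $(1+a)\ln(1+a)-a\ge a^2/3$ on $(0,3/2)$ underlying the upper tail, where the naive second-derivative argument fails near $a=1/2$ and one must instead track $f'$ directly (or simply invoke the known form of the Chernoff bound). Everything else reduces to the inequality $1+x\le e^x$ and the convexity of the exponential.
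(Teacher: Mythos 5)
The paper itself states Lemma~\ref{Che} without proof (it is quoted from the standard references \cite{AS}, \cite{JLR}), so the only comparison available is to the standard exponential-moment argument, which is exactly the route you take; your lower-tail computation and the reduction of the upper tail to the inequality $(1+a)\ln(1+a)-a\ge a^2/3$ on $(0,3/2)$ are both correct.

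However, your verification of that last one-variable inequality contains a genuine error: you assert $f'(3/2)=\ln(5/2)-1>0$, but $\ln(5/2)\approx 0.916<1$, so in fact $f'(3/2)<0$ and your claim that $f'(a)=\ln(1+a)-\tfrac{2a}{3}$ ``stays nonnegative throughout'' $(0,3/2)$ is false ($f'$ changes sign near $a\approx 1.19$). The inequality you need is nevertheless true on the whole interval, and the fix is short: since $f''(a)=\tfrac{1}{1+a}-\tfrac23$ changes sign exactly once, $f'$ increases and then decreases; with $f'(0)=0$ and $f'(3/2)<0$ it follows that $f'$ is nonnegative up to a single point $a_0\in(1,3/2)$ and negative afterwards, so $f$ increases on $(0,a_0)$ and decreases on $(a_0,3/2)$. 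Hence $\min_{[0,3/2]}f$ is attained at an endpoint, and it suffices to check $f(0)=0$ and $f(3/2)=\tfrac52\ln\tfrac52-\tfrac32-\tfrac34\approx 0.04>0$, which gives $f\ge 0$ on $[0,3/2]$ (and shows the constant $3$ genuinely needs the restriction $a<3/2$; the inequality already fails by $a=2$). With this endpoint check replacing the erroneous monotonicity claim, your proof is complete and otherwise follows the standard derivation.
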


\subsection{$r$-factors}

One of the main ingredients in our results is the ability to find an
$r$-factor in a graph with $r$ as large as possible. The following
theorem of Katerinis \cite{KAT} shows that a dense graph contains a
dense $r$-factor.

\begin{theorem} \label{regularsubgraphdense}
Let $r$ be a positive integer and let $G$ be a graph such that:
\begin{enumerate} [(i)]
\item $r|V(G)|$ is even, and
\item $\delta(G)\geq |V(G)|/2$, and
\item $|V(G)|\geq 4r-5$.
\end{enumerate}
Then $G$ contains an $r$-factor.
\end{theorem}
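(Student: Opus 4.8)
The natural route is to deduce this from Tutte's $f$-factor theorem applied to the constant function $f\equiv r$. By that theorem, $G$ contains an $r$-factor if and only if, for every pair of disjoint sets $S,T\subseteq V(G)$,
\[
\delta(S,T):=r|S|+\sum_{v\in T}\bigl(d_{G-S}(v)-r\bigr)-q(S,T)\ \ge\ 0,
\]
where $q(S,T)$ is the number of components $C$ of $G-(S\cup T)$ for which $e_G(C,T)+r|C|$ is odd. A standard parity observation gives $\delta(S,T)\equiv r|V(G)|\pmod 2$ for every choice of $S,T$, so the even parity of $r|V(G)|$ means it suffices to prove $\delta(S,T)\ge 0$ for all disjoint $S,T$. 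I would assume towards a contradiction that some pair $S,T$ violates this, set $n=|V(G)|$, $s=|S|$, $t=|T|$, and aim to derive $\delta(S,T)\ge 0$ from $\delta(G)\ge n/2$ and $n\ge 4r-5$.

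The two inputs I would extract from the minimum-degree hypothesis are: (a) for every $v\in T$, $d_{G-S}(v)\ge\delta(G)-s\ge n/2-s$; and (b) every component $C$ of $G-(S\cup T)$ has $|C|\ge n/2-s-t+1$, since each of its vertices sends all of its $\ge n/2$ edges into $S\cup T\cup C$. Fact (b) bounds the number of components, and hence $q(S,T)$, by $(n-s-t)/(n/2-s-t+1)$ whenever $s+t<n/2$. Refining the count in (b) also gives, for each component $C$, the bound $e_G(T,C)\ge |C|\bigl(n/2-s-|C|+1\bigr)$, and all these edges feed into the identity $\sum_{v\in T}d_{G-S}(v)=2e_G(T)+e_G\bigl(T,\,V\setminus(S\cup T)\bigr)$ that controls the positive term in $\delta(S,T)$.

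I would then split into cases by the sizes of $s$ and $t$, using $r\le (n+5)/4$ to close each one. When $s\ge n/2$, one gets $\delta(S,T)\ge r(s-t)+(s+t-n)\ge (s-t)+(s+t-n)=2s-n\ge 0$ using only $q(S,T)\le n-s-t$, $r\ge1$ and $t\le n-s$. When $t\le s<n/2$, the term $r|S|-r|T|\ge 0$ together with $\sum_{v\in T}(d_{G-S}(v)-r)\ge t(n/2-s-r)$ and the component bound from (b) suffices (with a short split according to whether $s+t$ is below or above $n/2$). The remaining regime, $s<t$ with $s+t$ close to $n/2$, is the crux and the main obstacle: here $r|S|-r|T|$ is negative, $q(S,T)$ can be as large as roughly $n/4$, and the crude estimates $d_{G-S}(v)\ge n/2-s$ and $q(S,T)\le n-s-t$ are simultaneously too weak. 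The resolution is to exploit the tension between them — a large value of $q(S,T)$ forces the components of $G-(S\cup T)$ to be small, and by the refined bound $e_G(T,C)\ge |C|(n/2-s-|C|+1)$ small components force $e_G(T,V\setminus(S\cup T))$, and hence $\sum_{v\in T}d_{G-S}(v)$, well above $t(n/2-s)$, so that $\sum_{v\in T}(d_{G-S}(v)-r)$ dominates $q(S,T)$ precisely when $r\le (n+5)/4$. Making this compensation quantitative is where the hypothesis $n\ge 4r-5$ enters tightly; it is essentially extremal for the graph whose vertex set splits into a set $X$ with $|X|=n/2$ and an independent set $V\setminus X$ of size $n/2$, every vertex of $V\setminus X$ joined to all of $X$ (with $S\cup T=X$).
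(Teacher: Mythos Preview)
The paper does not prove this theorem at all: it is quoted as a result of Katerinis \cite{KAT} and used as a black box. So there is no ``paper's own proof'' to compare against.

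As for your sketch, the approach via Tutte's $f$-factor criterion is exactly the natural one, and is essentially how Katerinis proves it. The parity reduction and the easy cases ($s\ge n/2$, and $t\le s<n/2$) are fine. What you call the ``crux'' --- the regime $s<t$ with $s+t$ near $n/2$ --- is also correctly identified, and your heuristic (many odd components force them to be small, which in turn forces many $T$--$C$ edges and boosts $\sum_{v\in T}d_{G-S}(v)$) is the right mechanism. But this part of your proposal is not yet a proof: you have not actually carried out the quantitative trade-off, and the inequality $e_G(T,C)\ge |C|(n/2-s-|C|+1)$ is only useful when $|C|\le n/2-s$, so one still has to handle the mixed case where some components are small and some are large. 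The argument can be completed along these lines, but it requires a careful (if elementary) optimisation over the component sizes, and the constant in $n\ge 4r-5$ really does emerge from that computation rather than from the informal compensation you describe. If you want to include a self-contained proof, you should either work that case out in full or simply cite Katerinis as the paper does.
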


When a given graph $G$ is almost regular, it turns out that $G$
contains $r$-factors with $r$ much closer to $\delta(G)$ than given
by Theorem \ref{regularsubgraphdense}. The following lemma was
proved by Christofides, K\"uhn and Osthus in \cite{CKO}.

\begin{lemma}(Theorem 12 in \cite{CKO})\label{AlmostRegularToRegular}
Let $G$ be a graph on $n$ vertices of minimum degree
$\delta=\delta(G)\geq n/2$.
\begin{enumerate}[(i)]
\item Let $r$ be an even number such that $r\leq
\frac{\delta+\sqrt{n(2\delta-n)}}{2}$. Then $G$ contains an
$r$-factor.
\item Let $0<\xi<1/9$ and suppose $(1/2+\xi)n\leq \Delta(G)\leq
\delta+\xi^2n$. If $r$ is an even number such that $r\leq \delta-\xi
n$ and $n$ is sufficiently large, then $G$ contains an $r$-factor.
\end{enumerate}

\end{lemma}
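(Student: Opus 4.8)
The natural tool is Tutte's $f$-factor theorem applied with the constant function $f\equiv r$: the graph $G$ has an $r$-factor if and only if
\[
\Phi(S,T)\ :=\ r|S|\ +\ \sum_{v\in T}\bigl(d_{G-S}(v)-r\bigr)\ -\ q(S,T)\ \ge\ 0
\]
for every pair of disjoint sets $S,T\subseteq V(G)$, where $q(S,T)$ is the number of components $C$ of $G-(S\cup T)$ with $r|C|+e_G(C,T)$ odd. So the whole statement reduces to checking this inequality for all $S,T$; throughout write $s=|S|$, $t=|T|$, $u=n-s-t$. The extreme cases ($r$ up to about $n/4$, which is already Theorem~\ref{regularsubgraphdense}, or $s+t$ close to $n$) are essentially immediate, so the real work is an intermediate regime, and the plan is to squeeze out of $\Phi(S,T)\ge0$ precisely the quantitative thresholds in (i) and (ii).

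The first step is two structural observations about a component $C$ of $G-(S\cup T)$. Since every $v\in C$ has all of its neighbours in $S\cup T\cup C$, we get $d_G(v)\le s+t+|C|-1$, hence $|C|\ge\delta-s-t+1$. Secondly, if $C=\{v\}$ is a singleton then — and here is where the hypothesis that $r$ is \emph{even} is used — $r|C|$ is even, so $C$ is counted by $q$ only when $e_G(v,T)$ is odd; thus $e_G(v,T)\ge1$, and since $v$ has no neighbour outside $S\cup T$ this forces $s+t\ge d_G(v)\ge\delta$ and moreover $e_G(v,T)\ge\delta-s$. Using these I would bound $q(S,T)=q_{1}+q_{\ge2}$ (singletons versus the rest): if $s+t<\delta$ there are no singleton odd components and each counted one has $\ge\delta-s-t+1$ vertices, so $q\le (n-s-t)/(\delta-s-t+1)$; if $s+t\ge\delta$ then $u\le n-\delta$ is already small, and in addition each singleton odd component sends at least $\delta-s$ edges into $T$, so $q_{1}(\delta-s)\le e_G(V\setminus(S\cup T),T)\le\sum_{w\in T}d_G(w)$.

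Now feed these bounds, together with $\sum_{v\in T}\bigl(d_{G-S}(v)-r\bigr)=\sum_{v\in T}d_G(v)-e_G(S,T)-rt\ge\delta t-st-rt$, into $\Phi(S,T)$ and run a case analysis on $s$ and $t$. When $s$ or $t$ is large the terms $r|S|$ and $\sum_{v\in T}d_G(v)$ dominate and the inequality is clear; the delicate case is $s$ small with $t$ in the ``middle band'' just below $\delta$, where $G-T$ may split into several large components. Optimising the resulting estimate over the admissible values of $t$ (and of the common component size) yields a quadratic constraint on $r$, which a short computation rewrites as $r(\delta-r)\ge\bigl(\tfrac{n-\delta}{2}\bigr)^{2}$, i.e.\ as $r\le\frac{\delta+\sqrt{n(2\delta-n)}}{2}$ — exactly the bound in part (i). This middle-band analysis, simultaneously tracking the number of odd components, their sizes, and the edges they send to $T$, is the one genuinely technical point; the rest is routine.

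For part (ii) the plan is to rerun the same computation while exploiting $\Delta(G)\le\delta+\xi^{2}n$ to replace the wasteful estimates above — in particular $e_G(S,T)\le st$ and the crude counting of odd components — by sharper edge-counting ones: in a near-regular graph all the edges leaving the components of $G-(S\cup T)$ must be absorbed by the $s+t$ vertices of $S\cup T$, each of capacity at most $\Delta\le\delta+\xi^{2}n$, which pins down the number of odd components far more tightly than the generic bound $u/2$. Carrying this through turns the quadratic threshold of part (i) — only about $n/4$ when $\delta\approx n/2$ — into the linear threshold $r\le\delta-\xi n$; one finally takes $n$ large enough that the $\xi^{2}n$ slack and the floor/ceiling corrections are dominated by the $\xi n$ gap. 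I expect the obstacle here to be bookkeeping rather than a new idea, namely organising the case analysis so that near-regularity is cashed in at the right place.
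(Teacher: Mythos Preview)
The paper does not actually prove this lemma: it is quoted verbatim as a result of Christofides, K\"uhn and Osthus \cite{CKO}, with no proof supplied. So there is no ``paper's own proof'' to compare your proposal against.

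That said, your outline is the right one and matches the approach in \cite{CKO}: the Tutte $f$-factor criterion with $f\equiv r$, followed by a case analysis on $|S|$, $|T|$ and the component sizes of $G-(S\cup T)$, using parity of $r$ to control singleton odd components. The identification of the critical ``middle band'' regime and the emergence of the quadratic inequality $r(\delta-r)\ge\bigl(\tfrac{n-\delta}{2}\bigr)^{2}$ for part (i) is correct, as is the plan for (ii) of exploiting $\Delta-\delta\le\xi^{2}n$ via double-counting edges between $S\cup T$ and the rest to tighten the component count. What you have written is a credible proof sketch rather than a proof --- the actual case analysis in \cite{CKO} is somewhat lengthy and requires care in several subcases you have only gestured at --- but there is no wrong turn here.
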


The result of Lemma \ref{AlmostRegularToRegular} $(ii)$ immediately
implies the following useful corollary:

\begin{corollary} \label{cor:AlmostRegtoReg}
Let $1/2<c\leq 1$ and let $0<\varepsilon<1/9$ be such that
$c-\varepsilon-3\sqrt{\varepsilon}\geq 1/2$. Then for every
sufficiently large integer $n$ the following holds. Suppose that:
\begin{enumerate} [$(i)$]
\item $G$ is a graph with $|V(G)|=n$, and
\item $d(v)=(c\pm\varepsilon)n$ for every $v\in V(G)$.
\end{enumerate}
Then $G$ contains an $r$-factor for every even $r\leq
(c-\varepsilon')n$, where
$\varepsilon'=3\sqrt{\varepsilon}+\varepsilon$.
\end{corollary}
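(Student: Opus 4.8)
The plan is to derive the statement straight from Lemma~\ref{AlmostRegularToRegular}$(ii)$, applied to $G$ itself with its parameter $\xi$ taken to be (a suitable constant multiple of) $\sqrt{\varepsilon}$; for definiteness set $\xi := \sqrt{\varepsilon}$. So fix $G$ as in the statement, write $\delta := \delta(G)$ and $\Delta := \Delta(G)$, and record the trivial consequences of hypothesis $(ii)$: every vertex degree lies in the interval $(cn-\varepsilon n,\, cn+\varepsilon n)$, whence $\delta > (c-\varepsilon)n$, $\Delta < (c+\varepsilon)n$, and in particular $\Delta-\delta < 2\varepsilon n$.

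First I would check the hypotheses of Lemma~\ref{AlmostRegularToRegular}$(ii)$. The blanket assumption $\delta \ge n/2$ of that lemma follows from $\delta > (c-\varepsilon)n \ge (\tfrac12+\sqrt{\varepsilon})n \ge n/2$, using exactly the standing inequality $c-\varepsilon-\sqrt{\varepsilon}\ge \tfrac12$. The condition $0<\xi<1/9$ holds once $\varepsilon$ is small enough, which is the regime of interest. Finally, the ``sandwich'' $(\tfrac12+\xi)n \le \Delta \le \delta+\xi^2 n$: the lower bound is again immediate since $\Delta \ge \delta > (c-\varepsilon)n \ge (\tfrac12+\xi)n$; the upper bound is where the relation between $\xi$ and $\varepsilon$ is used, as one needs $\Delta-\delta < 2\varepsilon n \le \xi^2 n$.

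Granting the hypotheses, Lemma~\ref{AlmostRegularToRegular}$(ii)$ produces an $r$-factor in $G$ for every even $r \le \delta-\xi n$. Since $\delta - \xi n > (c-\varepsilon)n - \xi n = (c-\varepsilon-\xi)n = (c-\varepsilon')n$ with $\varepsilon' = \varepsilon+\xi = \varepsilon+\sqrt{\varepsilon}$, every even $r \le (c-\varepsilon')n$ in particular satisfies $r < \delta - \xi n$, and hence $G$ contains an $r$-factor. That is precisely the assertion of the corollary, and $\varepsilon' \to 0$ as $\varepsilon\to 0$.

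I do not expect a genuine obstacle: the whole argument is bookkeeping on top of Lemma~\ref{AlmostRegularToRegular}$(ii)$. The only point that needs a moment's attention is the middle step above --- choosing $\xi$ large enough that the degree spread $\Delta-\delta$ (which can be as large as $2\varepsilon n$) is absorbed into $\xi^2 n$, while keeping $\xi$ small enough that $c-\varepsilon-\xi$ stays above $1/2$ (so that the lower bound of the sandwich and the assumption $\delta\ge n/2$ both survive) and $\varepsilon'=\varepsilon+\xi$ still tends to $0$ with $\varepsilon$; this trade-off is exactly what fixes the value of $\varepsilon'$.
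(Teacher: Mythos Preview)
Your approach is exactly what the paper intends: it simply states that the corollary follows immediately from Lemma~\ref{AlmostRegularToRegular}\,(ii), and you are supplying the routine verification with the natural choice $\xi\approx\sqrt{\varepsilon}$.

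One arithmetic point to watch, though. With the specific choice $\xi=\sqrt{\varepsilon}$, the check you write, $\Delta-\delta<2\varepsilon n\le\xi^2 n$, reads $2\varepsilon\le\varepsilon$, which is false. To absorb a degree spread that may be as large as (just under) $2\varepsilon n$ into $\xi^2 n$ you actually need $\xi\ge\sqrt{2\varepsilon}$; this in turn yields $\varepsilon'=\varepsilon+\sqrt{2\varepsilon}$ rather than the stated $\varepsilon+\sqrt{\varepsilon}$, and forces $\varepsilon<1/162$ in order to have $\xi<1/9$, rather than merely $\varepsilon<1/9$. You already flag this tension in your last paragraph, so you are aware of the trade-off; the only issue is that the concrete value $\xi=\sqrt{\varepsilon}$ you committed to does not survive it. This is really a cosmetic imprecision in the corollary as stated --- in every application in the paper only $\varepsilon'\to 0$ as $\varepsilon\to 0$ is used --- but the displayed inequality in your middle step is not literally true with your chosen $\xi$.
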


%
%
%

\subsection{Permanent estimates}

Let $S_n$ be the set of all permutations of the set $[n]$. Given a
permutation $\sigma\in S_n$, let $A(\sigma)$ be an $n\times n$
matrix which represents the permutation $\sigma$, that is, for every
$1\leq i,j\leq n$, $A(\sigma)_{ij}=1$ if $\sigma(i)=j$ and $0$
otherwise. Notice that for every $\sigma\in S_n$, in each row and
each column of $A(\sigma)$ there is exactly one $``1"$. Every
permutation $\sigma\in S_n$ has a (unique up to the order of cycles)
cyclic form. Given two $n\times n$ matrices $A$ and $B$, we write
$A\geq B$ in case that $A_{ij}\geq B_{ij}$ for every $1\leq
i,j\leq n$. The \emph{permanent} of an $n\times n$ matrix $A$ is
defined as $per(A)=\sum_{\sigma\in S_n} \prod_{i=1}^n
A_{i\sigma(i)}$. Notice that in case $A$ is a $0$-$1$ matrix, every
summand in the permanent is either $0$ or $1$, and the permanent of
$A$ counts the number of distinct permutations $\sigma\in S_n$ which
are \emph{contained} in $A$, that is, the number of $\sigma$'s for
which $A\geq A(\sigma)$. A $0$-$1$ matrix $A$ is called
$r$-\emph{regular} if it contains exactly $r$ $1$'s in every row and
in every column.

Using the following two well known permanent estimates, in the next
subsection we prove that if $A$ is any $0$-$1$ $\alpha n$-regular
matrix, then most of the permutations which are contained in it have
relatively few cycles in their cyclic form.

We state first an upper bound for the permanent. This bound was
conjectured by Minc and has been proven by Br\'egman \cite{Bregman}.

\begin{theorem} \label{Bregman}
Let $A$ be an $n\times n$ matrix of $0$-$1$ with $t$ ones
altogether. Then $per(A)\leq \Pi_{i=1}^n (r_i!)^{1/r_i}$, where
$r_i$ are integers satisfying $\sum_{i=1}^nr_i=t$ and are as equal as
possible.
\end{theorem}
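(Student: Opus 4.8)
The plan is to first prove the sharper, row-wise form of the inequality, namely $per(A)\le\prod_{i=1}^{n}(d_i!)^{1/d_i}$, where $d_i$ is the number of $1$'s in row $i$ of $A$, and then to deduce the stated ``as equal as possible'' bound from it by a short convexity argument, using only that $\sum_{i}d_i=t$. We may assume $per(A)>0$ (otherwise both inequalities are trivial), so every row and column of $A$ contains a $1$, i.e.\ $d_i\ge 1$ for all $i$.

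To prove the row-wise bound I would use an entropy argument. Let $\mathcal P\subseteq S_n$ be the set of permutations contained in $A$, so $|\mathcal P|=per(A)$, pick $\sigma$ uniformly from $\mathcal P$, and let $H(\cdot)$ denote Shannon entropy, so that $H(\sigma)=\ln per(A)$ (in natural units). Independently, pick a uniformly random linear order $\tau$ on the rows $[n]$; the idea is to ``reveal'' the values $\sigma(1),\dots,\sigma(n)$ one row at a time, in the order prescribed by $\tau$. Since $\sigma$ and $\tau$ are independent, $H(\sigma)=H(\sigma\mid\tau)$, and applying the chain rule for each fixed value of $\tau$ and then averaging gives
\[
\ln per(A)=\sum_{i=1}^{n}H\!\bigl(\sigma(i)\,\bigm|\,\{\sigma(i'):\tau^{-1}(i')<\tau^{-1}(i)\},\ \tau\bigr).
\]
When we reach row $i$, the value $\sigma(i)$ must lie among those columns $j$ with $A_{ij}=1$ that have not yet been used by the previously revealed rows; let $N_i=N_i(\sigma,\tau)$ be the number of such columns. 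Then each summand above is at most $\mathbb E[\ln N_i]$.

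The key point is that averaging over $\tau$ makes $N_i$ \emph{uniform}. Fix $\sigma$ and $i$, write $S=\{j:A_{ij}=1\}$ (so $|S|=d_i$ and $\sigma(i)\in S$) and $R=\{\sigma^{-1}(j):j\in S\}$, the set of the $d_i$ rows occupying these columns under $\sigma$; note $i\in R$. By construction $N_i$ equals the number of rows of $R$ lying at or after $i$ in the order $\tau$, and since $\tau$ induces a uniformly random ordering of $R$, the position of $i$ within $R$ is uniform, hence $N_i$ is uniform on $\{1,\dots,d_i\}$. Therefore $\mathbb E[\ln N_i]=\frac1{d_i}\sum_{m=1}^{d_i}\ln m=\frac1{d_i}\ln(d_i!)$, and summing over $i$ yields $\ln per(A)\le\sum_{i}\frac1{d_i}\ln(d_i!)$, i.e.\ $per(A)\le\prod_{i}(d_i!)^{1/d_i}$. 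Finally, since $\sum_i d_i=t$, the stated form follows once we know that $\prod_{i}(x_i!)^{1/x_i}$, over sequences of positive integers with $\sum_i x_i=t$, is maximized by the balanced sequence $(r_1,\dots,r_n)$. This is a routine exchange argument: setting $h(x)=\frac1x\ln(x!)$, a short computation shows the increments $h(x)-h(x-1)=\frac{1}{x(x-1)}\ln\!\bigl(x^{x-1}/(x-1)!\bigr)$ are non-increasing in $x$, so whenever $x_i\ge x_j+2$ the swap $(x_i,x_j)\mapsto(x_i-1,x_j+1)$ does not decrease $\sum_k h(x_k)$; iterating reaches the balanced sequence.

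The main obstacle is the entropy step, and within it the idea of revealing the rows in a \emph{uniformly random} order: with any fixed order one would only get a weaker per-row estimate, whereas averaging over $\tau$ forces the number $N_i$ of still-available columns to be exactly uniform on $\{1,\dots,d_i\}$, which is precisely what produces the sharp factor $(d_i!)^{1/d_i}$. The remaining ingredients---the chain-rule bookkeeping with the auxiliary order, and the concavity check for $h$---are routine.
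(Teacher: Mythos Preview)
The paper does not prove this theorem at all: it is quoted as a known result of Br\'egman (with reference \cite{Bregman}) and used as a black box. So there is nothing to compare your argument to on the paper's side.

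That said, your proposal is a correct and complete proof. What you have written is exactly Radhakrishnan's entropy proof of the Minc--Br\'egman inequality: choosing a uniformly random permutation $\sigma$ contained in $A$, revealing its values in a uniformly random row order $\tau$, and using the chain rule together with the observation that the number $N_i$ of still-available columns in row $i$ is, for each fixed $\sigma$, uniform on $\{1,\dots,d_i\}$. Your reduction from the row-wise bound $\prod_i (d_i!)^{1/d_i}$ to the ``as equal as possible'' form via the discrete concavity of $h(x)=\tfrac{1}{x}\ln(x!)$ is also correct; the exchange argument goes through once one checks that the increments $h(x)-h(x-1)$ are non-increasing, which you state but do not fully verify. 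A quick way to see it is to note that $h(x)=\tfrac{1}{x}\sum_{m=1}^{x}\ln m$ is an average of an increasing concave sequence, and such running averages have non-increasing increments; alternatively one can just check directly that $(x-1)\ln x-\ln(x-1)!$ grows sublinearly in the relevant sense.

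One small remark: you implicitly use $d_i\ge 1$ for all $i$ (so that $h(d_i)$ makes sense and $N_i\ge 1$), and also $t\ge n$ (so that the balanced $r_i$ are all at least $1$). Both follow from your standing assumption $per(A)>0$, since then every row and every column of $A$ contains a $1$; it would be worth saying this explicitly for the $r_i$'s as well.
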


A  square matrix $A$ of nonnegative real numbers is called doubly stochastic
if each row and column of $A$ sum to $1$. The following lower bound is also known as the Van der Waerden
conjecture and has been proven by Egorychev \cite{Egorychev} and by
Falikman \cite{Falikman}:

\begin{theorem} \label{VanDerWaerden}
Let $A$ be an $n\times n$ doubly stochastic matrix. Then $per(A)\geq
\frac{n!}{n^n}$.
\end{theorem}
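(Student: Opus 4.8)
The plan is to follow Gurvits's capacity argument, which gives a short and essentially self-contained proof and sidesteps the Alexandrov--Fenchel / mixed discriminant machinery of the original Egorychev and Falikman proofs. For a polynomial $p(x_1,\dots,x_n)$ with nonnegative coefficients, define its \emph{capacity} by $\mathrm{cap}(p)=\inf_{x_1,\dots,x_n>0} p(x)/(x_1\cdots x_n)$. The first observation is that for a doubly stochastic $A$ the polynomial $p_A(x)=\prod_{i=1}^n\big(\sum_{j=1}^n A_{ij}x_j\big)$ satisfies $\mathrm{cap}(p_A)\ge 1$: by the weighted AM--GM inequality $\sum_j A_{ij}x_j\ge \prod_j x_j^{A_{ij}}$ for each row $i$ (using $\sum_j A_{ij}=1$), and multiplying over $i$ and using the column sums $\sum_i A_{ij}=1$ yields $p_A(x)\ge \prod_j x_j$. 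Two further structural facts are immediate: the coefficient of the monomial $x_1x_2\cdots x_n$ in $p_A$ is exactly $\mathrm{per}(A)$ (a term of the expansion uses index $\sigma(i)$ from the $i$th factor, and contributes to this monomial precisely when $\sigma$ is a permutation), and $p_A$ is real stable (a product of linear forms with nonnegative coefficients does not vanish when all $x_i$ lie in the open upper half-plane) and homogeneous of degree $n$.

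Next I would introduce the ``peeling'' operator: for a polynomial $q$ in variables $x_1,\dots,x_k$ set $\Pi q:=\frac{\partial}{\partial x_k}q\big|_{x_k=0}$, a polynomial in $x_1,\dots,x_{k-1}$. A one-line check shows $\Pi$ kills every monomial whose degree in $x_k$ is not $1$, drops $x_k$ from the others, and keeps their coefficients; hence applying $\Pi$ to $p_A$ a total of $n$ times leaves the constant $\mathrm{per}(A)$. The heart of the proof, and the step I expect to be the main obstacle, is the capacity inequality: if $q$ is real stable, homogeneous, with nonnegative coefficients, and $\deg_{x_k}q=d$, then
\[
\mathrm{cap}(\Pi q)\ \ge\ g(d)\,\mathrm{cap}(q),\qquad\text{where } g(d)=\Big(\tfrac{d-1}{d}\Big)^{d-1}\ \text{ and } g(1)=1 .
\]
Freezing $x_1,\dots,x_{k-1}$ at arbitrary positive reals reduces this to a one-variable statement about a polynomial $\phi(t)$ of degree $d$ with nonnegative coefficients and only real (hence nonpositive) roots: one must show that $\phi'(0)$ is at least $g(d)$ times $\inf_{t>0}\phi(t)/t$. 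That one-variable estimate, via a short Lagrange-multiplier/interlacing computation, is precisely where the constant $g(d)$ arises. One also needs the classical facts that $\partial_{x_k}$ preserves real stability and that specializing a real stable polynomial at a real value returns a real stable polynomial (or the zero polynomial), so that the inductive hypothesis is preserved along the peeling; positivity of $\mathrm{per}(A)$ for doubly stochastic $A$ (Birkhoff--von Neumann) guarantees none of the specializations are trivial.

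Finally I would assemble the pieces. Let $p_n:=p_A$ and $p_{i-1}:=\Pi p_i$, so $p_0=\mathrm{per}(A)$ is a constant, and write $d_i=\deg_{x_i}p_i$. Since $p_i$ is homogeneous of degree $i$ in $i$ variables we have $d_i\le i$, and as $g$ is decreasing on the positive integers, $g(d_i)\ge g(i)$. Chaining the capacity inequality $n$ times, using that the capacity of a constant is that constant and that $\mathrm{cap}(p_A)\ge 1$, gives
\[
\mathrm{per}(A)\ =\ \mathrm{cap}(p_0)\ \ge\ \Big(\prod_{i=1}^n g(d_i)\Big)\,\mathrm{cap}(p_A)\ \ge\ \prod_{i=1}^n\Big(\tfrac{i-1}{i}\Big)^{i-1}.
\]
A telescoping computation collapses the right-hand product to $\frac{(n-1)!}{n^{n-1}}=\frac{n!}{n^n}$, which is the claimed bound. (Tracking the equality cases in AM--GM and in the one-variable estimate would in addition show that equality forces $A$ to be the all-$\tfrac1n$ matrix, though that refinement is not needed here.)
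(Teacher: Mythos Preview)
The paper does not actually prove this theorem; it is quoted as a known result, with references to the original proofs of Egorychev and Falikman, and is used only as a black box in the permanent estimates of Section~2. So there is no ``paper's own proof'' to compare against.

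Your proposal is the Gurvits capacity argument, and the sketch is correct in all essentials. The three ingredients are right: the AM--GM computation giving $\mathrm{cap}(p_A)\ge 1$ is correct and uses both the row- and column-stochasticity exactly as you wrote; the identification of $\mathrm{per}(A)$ with the coefficient of $x_1\cdots x_n$ and with the result of $n$ successive peelings is standard; and your statement of the one-variable capacity inequality $\mathrm{cap}(\Pi q)\ge g(d)\,\mathrm{cap}(q)$ with $g(d)=\big(\tfrac{d-1}{d}\big)^{d-1}$ is the genuine Gurvits step. You are also right that real stability is preserved under $\partial_{x_k}$ and under real specialization, which is what keeps the induction running, and that Birkhoff--von~Neumann guarantees the chain never collapses to the zero polynomial. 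The final telescoping to $n!/n^n$ is correct. Compared with the Egorychev/Falikman proofs cited in the paper, which go through Alexandrov--Fenchel-type inequalities for mixed discriminants, your route is considerably more elementary and self-contained; the only nontrivial analytic input is the single-variable inequality for real-rooted polynomials, which is where the constant $g(d)$ is pinned down.
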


\subsection{$2$-factors with few cycles}

Motivated by ideas from \cite{Al, FK, K}, in this subsection we
prove that for every sufficiently large integer $n$, in every
$r$-regular, $0$-$1$, $n\times n$ matrix $A$, most of the
permutations contained in $A$ have relatively few cycles in their
cyclic form, provided that $r$ is linear in $n$. For a positive
integer $r$ and a graph $G$, we define a $(\leq r)$-factor to be any
spanning subgraph $H$ of $G$ for which each connected components of
$H$ is $s$-regular for some $s\le r$. We conclude that in every
dense $r$-regular graph $G$, most of the $(\leq 2)$-factors do not
contain too many cycles (we consider a single edge as a cycle too).
We also prove that in case $r$ is even, $G$ contains such a
$2$-factor with all cycles of length at least $3$. These lemmas are
crucial since one of the main ingredients of our proofs is the
ability to find ``enough" $2$-factors with only few cycles and then
to turn them into Hamilton cycles.

\begin{lemma} \label{NotTooManyCycles}
Let $\alpha>0$ be a constant and let $n$ be a positive integer.
Suppose that:
\begin{enumerate} [$(i)$]
\item $A$ is an $n\times n$ matrix, and
\item all entries of $A$ are $0$ or $1$, and
\item $A$ is $\alpha n$-regular.
\end{enumerate}
Then the number of permutations $\sigma\in S_n$ for which $A\geq
A(\sigma)$ and such that there are at most $s^*:=\sqrt{n \ln n}$
cycles in their cyclic form, is
$\left(1+o(1)\right)^n\left(\frac{\alpha n}{e}\right)^n$.
\end{lemma}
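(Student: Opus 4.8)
The plan is to estimate the total number of permutations $\sigma$ contained in $A$, and then show that only a negligible fraction of them have more than $s^*=\sqrt{n\ln n}$ cycles; the lemma then follows. For the total count, I would sandwich $per(A)$ between the two permanent estimates from the previous subsection. Since $A$ is $\alpha n$-regular, the normalized matrix $\frac{1}{\alpha n}A$ is doubly stochastic, so by Theorem~\ref{VanDerWaerden} we get $per(A)=(\alpha n)^n\, per(\frac{1}{\alpha n}A)\geq (\alpha n)^n\frac{n!}{n^n}=\left(\frac{\alpha n}{e}\right)^n(1+o(1))^n$, using Stirling. For the matching upper bound, apply Theorem~\ref{Bregman} with $t=\alpha n\cdot n$ ones distributed as equally as possible, so all $r_i=\alpha n$: this gives $per(A)\leq \big((\alpha n)!\big)^{n/(\alpha n)}=\big((\alpha n)!^{1/(\alpha n)}\big)^{n}=\left(\frac{\alpha n}{e}\right)^n(1+o(1))^n$, again by Stirling. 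Hence the total number of $\sigma$ with $A\geq A(\sigma)$ is $\left(\frac{\alpha n}{e}\right)^n(1+o(1))^n$.

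The heart of the argument is to bound the number of \emph{bad} permutations, i.e.\ those contained in $A$ with more than $s^*$ cycles. I would bound these by a union over the ``shape'' of the cycle structure. Concretely, pick any set $C$ of $s$ distinguished vertices, one from each cycle (say the smallest element in each cycle, to avoid overcounting): a permutation with at least $s$ cycles determines such a $C$ with $|C|=s$ once we also specify, for each vertex of $C$, whether it is a fixed point, and for the rest of the structure we use a permanent bound on the complementary $(n-s)\times(n-s)$ submatrix plus the images of the $s$ chosen vertices. More simply: the number of permutations of $[n]$ contained in $A$ with at least $s$ cycles is at most $\binom{n}{s}\cdot (\alpha n)^{s}\cdot per(A')$, where $A'$ is obtained from $A$ by deleting the $s$ rows and columns indexed by $C$ — the factor $(\alpha n)^s$ bounds the number of choices for the images $\sigma(v)$, $v\in C$, and $A'$ is still essentially $\alpha n$-regular, so $per(A')\leq \big((\alpha n)!\big)^{(n-s)/(\alpha n)}(1+o(1))^{n-s}$ by Br\'egman again. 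Thus the number of bad permutations is at most
\[
\binom{n}{s^*}(\alpha n)^{s^*}\left(\frac{\alpha n}{e}\right)^{n-s^*}(1+o(1))^n
\le n^{s^*}(\alpha n)^{s^*}\left(\frac{\alpha n}{e}\right)^{n-s^*}(1+o(1))^n .
\]
Comparing this to $\left(\frac{\alpha n}{e}\right)^n$, the ratio is at most $n^{s^*}(\alpha n)^{s^*}\big/(\alpha n/e)^{s^*}\cdot(1+o(1))^n = (e n)^{s^*}(1+o(1))^n$. Since $s^*=\sqrt{n\ln n}=o(n)$, we have $(en)^{s^*}=e^{s^*\ln(en)}=e^{O(\sqrt{n}\,(\ln n)^{3/2})}=(1+o(1))^n$, so this does \emph{not} immediately beat the $(1+o(1))^n$ slack — I need the bad count to be genuinely smaller. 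The fix is to choose the threshold on $s$ more carefully inside the union bound: summing over all $s>s^*$ and observing that $\binom{n}{s}(\alpha n)^s(\alpha n/e)^{n-s}\le (\alpha n/e)^n (e^2 n/(\alpha n)\cdot \alpha n/?)^{s}$ — one checks the summand is decreasing in $s$ for $s$ past $s^*$ because the extra $\binom{n}{s}\approx (en/s)^s$ grows slower than the $e^{s}$ gain only once $s\gg$ something like $\sqrt n$; choosing $s^*=\sqrt{n\ln n}$ makes the tail sum be $\left(\frac{\alpha n}{e}\right)^n e^{-\Theta(\sqrt n\ln n)}$, which is $o(1)$ times the total even after absorbing the $(1+o(1))^n$ factors, since $e^{-\Theta(\sqrt n \ln n)}=e^{-\omega(\sqrt n)}$ beats any $e^{o(n)}$... \emph{provided} the $(1+o(1))^n$ above is actually $e^{o(\sqrt n \ln n)}$.

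I expect this last point to be the main obstacle: the crude Stirling error $(1+o(1))^n$ is $e^{o(n)}$, which is far too lossy to be killed by an $e^{-\Theta(\sqrt n\ln n)}$ factor. So the real work is to carry out all three permanent/Stirling estimates with \emph{explicit} error terms — Stirling gives $m!=(m/e)^m\sqrt{2\pi m}\,e^{O(1/m)}$, so $(m!)^{1/m}=(m/e)\cdot m^{O(1/m)}$ and the Br\'egman bound gives $per(A)\le (\alpha n/e)^n\cdot e^{O(n/(\alpha n))\cdot \log(\alpha n)}=(\alpha n/e)^n e^{O(\log n)}$, i.e.\ a \emph{polynomial} (not exponential) error. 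With polynomial-size multiplicative errors on both the Van der Waerden lower bound and the two Br\'egman upper bounds, the bad-to-total ratio becomes $\mathrm{poly}(n)\cdot (en)^{s^*}\cdot e^{-s^*\ln(\alpha n)\cdot(1\pm o(1))}$; since $\ln(en)-\ln(\alpha n)=\ln(e/\alpha)=O(1)$ this is $\mathrm{poly}(n)\cdot e^{O(s^*)}=e^{O(\sqrt{n\ln n})}$, which must be compared against... and here one sees the choice $s^*=\sqrt{n\ln n}$ is exactly calibrated so that the binomial coefficient $\binom{n}{s^*}\le (en/s^*)^{s^*}=e^{s^*(\ln n-\frac12\ln(n\ln n)+O(1))}=e^{\frac12 s^*\ln n(1+o(1))}=e^{\frac12\sqrt n(\ln n)^{3/2}(1+o(1))}$ and the gain per cycle forces the total tail to be at most $\left(\frac{\alpha n}{e}\right)^n\cdot e^{-c\sqrt n(\ln n)^{3/2}}=o(1)\cdot\left(\frac{\alpha n}{e}\right)^n$, so that after subtracting the bad permutations from the total, the count of \emph{good} permutations is still $\left(\frac{\alpha n}{e}\right)^n(1+o(1))^n$. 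So the proof structure is: (1) two-sided permanent estimate with polynomial error for the total, (2) union bound over the number of cycles using Br\'egman on a deleted submatrix, (3) check the tail sum is $o(1)$ relative to the total, which pins down why $s^*=\sqrt{n\ln n}$ is the right threshold.
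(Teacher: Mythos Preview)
Your overall plan --- sandwich $per(A)$ between Van der Waerden and Br\'egman with polynomial error, then show that permutations with $>s^*$ cycles contribute negligibly --- matches the paper, and your instinct that the Stirling errors must be tracked as $\mathrm{poly}(n)$ rather than $(1+o(1))^n$ is correct. But your step~(2) has a genuine gap that no amount of error-tracking repairs. Your encoding picks $s$ cycle representatives $C$, records $\sigma(v)$ for $v\in C$ (at most $(\alpha n)^s$ choices), and bounds the rest by $per(A')$ for an $(n{-}s)\times(n{-}s)$ submatrix. Even granting the sharp Br\'egman bound on $A'$ (its average row sum drops to roughly $\alpha(n-s)$, yielding $per(A')\le(\alpha n/e)^{n-s}e^{-s}\cdot\mathrm{poly}(n)$), you obtain
\[
\#\{\text{bad}\}\ \le\ \binom{n}{s}(\alpha n)^s\left(\frac{\alpha n}{e}\right)^{n-s}e^{-s}\cdot\mathrm{poly}(n)\ =\ \binom{n}{s}\left(\frac{\alpha n}{e}\right)^{n}\cdot\mathrm{poly}(n),
\]
so the bad-to-total ratio is $\binom{n}{s^*}/\mathrm{poly}(n)=e^{\Theta(\sqrt{n}\,(\ln n)^{3/2})}\gg1$. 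This is not a bookkeeping artifact: recording one outgoing edge per representative does not exploit the hypothesis ``$\sigma$ has many cycles'' at all --- you have essentially upper-bounded \emph{all} permutations in $A$, inflated by $\binom{n}{s^*}$.

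The idea you are missing is a pigeonhole step. If $\sigma$ has more than $\sqrt{n\ln n}$ cycles, then since their lengths sum to $n$, at least $\tfrac12\sqrt{n\ln n}$ of them have length at most $2\sqrt{n/\ln n}$; hence some fixed length $\ell\le 2\sqrt{n/\ln n}$ occurs at least $j:=\tfrac14\ln n$ times. Now encode $j$ \emph{entire} cycles of that length: $\binom{n}{j}$ choices of anchors, at most $(\alpha n)^{\ell-1}$ per cycle (the closing edge is forced), and $\phi(A,n-j\ell)\le(\alpha n/e)^{n-j\ell}e^{-j\ell}\cdot\mathrm{poly}(n)$ for the remainder. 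The factor $(\alpha n)^{-1}$ saved per \emph{closed} cycle is the whole point: the ratio to $(\alpha n/e)^n$ collapses to $\binom{n}{j}(\alpha n)^{-j}\cdot\mathrm{poly}(n)\le(e/(\alpha j))^{j}\cdot\mathrm{poly}(n)=n^{-\Omega(\ln\ln n)}=o(1/n)$, which survives the sums over $\ell$ and over $s>s^*$. In short: encode a few closed short cycles using only $j=\Theta(\ln n)$ anchors, not one open edge from each of $s^*$ cycles.
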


Note that in case $A$ is the adjacency matrix of a graph $G$, every
permutation $\sigma\in S_n$ for which $A\geq A(\sigma)$ corresponds
to a $(\leq 2)$-factor with exactly the same number of cycles as in
the cyclic form of $\sigma$ (we consider a single edge as a cycle
too); and every $(\leq 2)$-factor $F$ of $G$ corresponds to at most
$2^s$ permutations, where $s$ is the number of cycles in $F$ (each
cycle can be oriented in at most two ways). Therefore, the following
is an immediate corollary of Lemma \ref{NotTooManyCycles}:

\begin{corollary}\label{2factor}
Let $\alpha>0$ be a constant and let $n$ be a positive integer.
Suppose that:
\begin{enumerate} [$(i)$]
\item $G$ is a graph on $n$ vertices, and
\item $G$ is $\alpha n$-regular.
\end{enumerate}
Then the number of $(\leq 2)$-factors of $G$ with at most
$s^*:=\sqrt{n\ln n}$ cycles is
$\left(1+o(1)\right)^n\left(\frac{\alpha n}{e}\right)^n$.
\end{corollary}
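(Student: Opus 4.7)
\textbf{Proof plan for Corollary \ref{2factor}.}
The plan is to derive the statement directly from Lemma \ref{NotTooManyCycles} applied to the adjacency matrix $A$ of $G$: $A$ is a $0$-$1$, $\alpha n$-regular, $n \times n$ matrix, so the hypotheses of the lemma are met. The only work is to translate the count of permutations $\sigma\in S_n$ with $A\ge A(\sigma)$ and at most $s^\ast=\sqrt{n\ln n}$ cycles into a count of $2$-factors of $G$ with at most $s^\ast$ cycles.

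First I would set up the correspondence. Given a permutation $\sigma$ with $A\ge A(\sigma)$, the multiset of edges $\{i,\sigma(i)\}$ (over $i\in[n]$) is a spanning subgraph of $G$ in which every vertex is covered exactly once when orientations are forgotten; under the paper's convention that a single edge counts as a cycle, this is exactly a $2$-factor, and its cycles are in bijection with the cycles of $\sigma$, so the two cycle counts agree. Conversely, each $2$-factor $F$ with $s$ cycles is produced by at most $2^s$ such permutations, since each cycle of length $\ge 3$ can be oriented in two ways, while each single edge comes from the unique involution on its two endpoints.

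Let $P_\ast$ be the count from Lemma \ref{NotTooManyCycles} and let $F_\ast$ denote the number of $2$-factors of $G$ with at most $s^\ast$ cycles. The correspondence above gives
\begin{equation*}
F_\ast \;\le\; P_\ast \;\le\; 2^{s^\ast}\, F_\ast ,
\end{equation*}
because every $2$-factor counted by $F_\ast$ is produced by at least one (and at most $2^{s^\ast}$) permutations counted by $P_\ast$, and each permutation counted by $P_\ast$ yields a unique $2$-factor counted by $F_\ast$. Since $2^{s^\ast}=2^{\sqrt{n\ln n}}=e^{o(n)}=(1+o(1))^n$ and Lemma \ref{NotTooManyCycles} gives $P_\ast=(1+o(1))^n(\alpha n/e)^n$, both inequalities sandwich $F_\ast$ to $(1+o(1))^n(\alpha n/e)^n$.

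There is essentially no obstacle here; the entire content is in Lemma \ref{NotTooManyCycles}. The only subtlety worth flagging is the convention that single edges count as cycles of the $2$-factor, which is precisely what makes the cycle counts of $\sigma$ and of its associated $2$-factor coincide, so no transposition in $\sigma$ produces an awkward boundary case.
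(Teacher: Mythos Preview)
Your proposal is correct and is essentially identical to the paper's own derivation: the paper also applies Lemma \ref{NotTooManyCycles} to the adjacency matrix of $G$, notes that each permutation contained in $A$ corresponds to a $2$-factor with the same number of cycles (using the convention that single edges count as cycles), and that each $2$-factor with $s$ cycles arises from at most $2^s$ permutations, so the $2^{s^\ast}=(1+o(1))^n$ factor is absorbed. Your explicit sandwich $F_\ast\le P_\ast\le 2^{s^\ast}F_\ast$ is a clean formalization of exactly this reasoning.
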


\textbf{Proof of Lemma \ref{NotTooManyCycles}.} Given a $0$-$1$
matrix of order $n\times n$, let $S(A)=\{\sigma\in S_n: A\geq
A(\sigma)\}$ be the set of all permutations contained in $A$, and
let $f(A,k)$ be the number of permutations $\sigma\in S(A)$ with
exactly $k$ cycles. Notice that $f(A):=\sum_k f(A,k)=|S(A)|$. Given
an integer $1\leq t\leq n$ we also define
$$\phi(A,t):=\max\{f(A'): A' \textrm{ is a } t\times t \textrm{ submatrix of } A\}.$$

For the upper bound, using Theorem \ref{Bregman} and the fact that
$(k/e)^k \leq k!\leq k(k/e)^k$ we conclude that
$$per(A)\leq \left((\alpha n)!\right)^{n/(\alpha
n)}=(1+o(1))^n\left(\frac{\alpha n}{e}\right)^n.$$

Now, note that

$$per(A)=\sum_{s=1}^{n}f(A,s).$$

Applying Theorem \ref{VanDerWaerden} to the doubly stochastic matrix
$\frac{1}{\alpha n}A$, we obtain
$$\sum_{s=1}^{n}f(A,s)=
per(A)\geq n!\alpha^n\geq\left(\frac{\alpha n}{e}\right)^n.$$

In order to complete the proof we need to show that
$\sum_{s>s^*}f(A,s)=o\left(\left(\frac{\alpha
n}{e}\right)^n\right)$. Let $s>\sqrt{n \ln n}$, we wish to estimate
$f(A,s)$ from above. Given a permutation $\sigma\in S(A)$ with $s$
cycles, there must be at least $\frac{1}{2}\sqrt{n \ln n}$ cycles,
each of which is of length at most $2\sqrt{n/ \ln n}$. Therefore, by
the pigeonhole principle we get that there must be a cycle length
$\ell:=\ell(\sigma)\leq 2\sqrt{n/ \ln n}$ which appears at least
$j=\frac{\ln n}{4}$ times in $\sigma$. The number of permutations in
$S(A)$ which contain at least $j$ cycles of fixed length $\ell$ is
at most:

\begin{equation}
\label{permanent} \binom{n}{j} \prod_{i=1}^{j} (\alpha
n)^{\ell-1}\cdot \phi(A,n-j\ell)\leq
\left(\frac{en}{j}\right)^j(\alpha n)^{j\ell-j}\cdot\phi(A,n-j\ell).
\end{equation}

Indeed, first we fix $j$ cycles of length $\ell$. To do so we
choose $j$ elements, $x_1,\ldots,x_j$, one for each such a cycle.
This can be done in $\binom{n}{j}$ ways. Since $A$ is $\alpha
n$-regular, for each $1\leq i\leq j$, there are at most $(\alpha
n)^{\ell-1}$ options to close a cycle of length $\ell$ which
contains $x_i$. Given these $j$ cycles of total length $j\ell$,
there are at most $\phi(A,n-j\ell)$ ways to extend it to a cyclic
form of a permutation $\sigma \in S(A)$.

Next we estimate $\phi(A,n-j\ell)$. Let $t=j\ell$ and let $A_1$ be
an arbitrary $(n-t)\times (n-t)$ submatrix of $A$. By switching
order of some rows
and columns, we can assume that $A=\begin{pmatrix} A_1&B \\
C&A_2
\end{pmatrix}$, where $A_2$, $B$ and $C$ are  $t\times t$, $(n-t)\times t$ and $t\times (n-t)$ submatrices of $A$, respectively.
Given a $0$-$1$ matrix $M$, let $g(M)=\textbf{1}^TM\textbf{1}$ be
the number of $1$'s in $M$ ($\textbf{1}$ is a vector with all
entries equal $1$). Since $g(A_2)\leq t^2$ and since $A$ is $\alpha
n$-regular, it follows that $g(B)\geq \alpha nt-t^2 $. Therefore, we
conclude that $g(A_1)= \alpha n(n-t)-g(B)\leq \alpha n(n-t)-(\alpha
nt-t^2)$ and that the average number of $1$'s in a row or a column
of $A_1$ is
$$\frac{g(A_1)}{n-t}\leq \alpha n-\frac{t(\alpha n-t)}{n-t}=:d_1.$$

Note that $\alpha(n-t) \leq d_1 \leq \alpha n$.  Now, by Br\'egman's Theorem \ref{Bregman} we get that
$$per(A_1)\leq \left(d_1!\right)^{\frac{n-t}{d_1}}\leq
\left(\left(\frac{d_1}{e}\right)^{d_1}d_1\right)^{\frac{n-t}{d_1}
}\leq \left(\frac{\alpha n-\frac{t(\alpha
n-t)}{n-t}}{e}\right)^{n-t}(\alpha n) ^{1/\alpha }$$
$$\leq \left(\frac{\alpha n}{e}\right)^{n-t}\left(1-\frac{t(\alpha n-t)}{\alpha n(n-t)}\right)^{n-t}(\alpha n)^{1/\alpha} \leq
\left(\frac{\alpha n}{e}\right)^{n-t}e^{-t+t^2/(\alpha n)}(\alpha n)^{1/\alpha}.$$

Hence, we conclude that $$\phi(A,n-t)\leq \left(\frac{\alpha
n}{e}\right)^{n-t}e^{-t+t^2/(\alpha n)} (\alpha n)^{1/\alpha}.$$

Now, plugging it into the estimate (\ref{permanent}) and recalling
that $\ell\leq 2\sqrt{n/\ln n}$, $j=\frac{\ln n}{4}$ and $t=j\ell\leq \frac{1}{2}\sqrt{n \ln n}$, we have

\begin{eqnarray*}
f(A,s) &\leq & \sum_{\ell \leq 2\sqrt{n/\ln n}}\left(\frac{en}{j}\right)^j (\alpha n)^{t-j} \phi(A,n-t)\\
&\leq & \sum_{\ell \leq 2\sqrt{n/\ln n}} \left(\frac{en}{j}\right)^j (\alpha
n)^{t-j}\left(\frac{\alpha n}{e}\right)^{n-t} e^{-t+t^2/(\alpha n)} (\alpha n)^{1/\alpha}\\
&\leq &\left(\frac{\alpha n}{e}\right)^{n} \sum_{\ell \leq 2\sqrt{n/\ln n}}
\left(\frac{en}{j}\right)^j (\alpha n) ^{-j} e^{t^2/(\alpha n)}(\alpha n) ^{1/\alpha } \\
&\leq &  \left(\frac{\alpha n}{e}\right)^n \, 2\sqrt{n/\ln n} \,
\left(\frac{e/\alpha}{j}\right)^j \,e^{O(\ln n)}\,(\alpha n)^{1/\alpha }\\
&\leq & \left(\frac{\alpha n}{e}\right)^n \, 2\sqrt{n/\ln n} \,
n^{-\Omega(\ln \ln n)} \,n^{O(1)}\, (\alpha n)^{1/\alpha }\\
&= & \left(\frac{\alpha
n}{e}\right)^n\cdot o\left(\frac{1}{n}\right).\nonumber\\
\end{eqnarray*}

This clearly implies that
$\sum_{s>s^*}f(A,s)=o\left(\left(\frac{\alpha
n}{e}\right)^n\right)$ and completes the proof.
{\hfill $\Box$ \medskip \\}

In the following lemma we prove that given a dense $r$-regular graph
$G$, if $r$ is even, then $G$ contains a $2$-factor with not too
many components.

\begin{lemma} \label{Real2Factor}
Let $\alpha>0$ be a constant and let $n$ be sufficiently large
integer. Suppose that:
\begin{enumerate} [$(i)$]
\item $\alpha n$ is even, and
\item $G$ is a graph $n$ vertices, and
\item $G$ is $\alpha n$-regular.
\end{enumerate}
Then $G$ contains a $2$-factor with at most $\sqrt{n \ln n}$
components.
\end{lemma}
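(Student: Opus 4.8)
The plan is to deduce the lemma from Lemma~\ref{NotTooManyCycles} by replacing $G$ with a \emph{regular oriented graph} on the same vertex set, in which ``$2$-cycles'' (single edges) are structurally impossible, so that every permutation contained in its adjacency matrix automatically decomposes $V(G)$ into genuine cycles of length at least $3$.

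First I would exploit the hypothesis that $\alpha n$ is even: then every vertex of $G$ has even degree, hence every connected component of $G$ is Eulerian. Picking an Euler circuit in each component and orienting the edges along it, I obtain an orientation $D$ of $G$ in which every vertex has in-degree and out-degree exactly $k:=\alpha n/2$. (Alternatively one could invoke Petersen's $2$-factor theorem to decompose $G$ into $\alpha n/2$ edge-disjoint $2$-factors and orient each of them cyclically.) Since $G$ is simple and every edge of $G$ receives exactly one orientation, $D$ has no loops and no digon $\{uv,vu\}$; that is, $D$ is an oriented graph.

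Next I would let $B$ be the $0$-$1$ adjacency matrix of $D$ (so $B_{ij}=1$ iff $ij$ is an arc of $D$). Then $B$ is $k$-regular, i.e.\ $(\tfrac{\alpha}{2}n)$-regular, it has zero diagonal, and $B_{ij}B_{ji}=0$ for all $i\neq j$. Hence any permutation $\sigma$ with $B\ge A(\sigma)$ has no fixed point (zero diagonal) and no transposition (no digons), so every cycle in its cyclic form has length at least $3$; in particular $\sigma(v)\neq\sigma^{-1}(v)$ for all $v$, which guarantees that $\{\{v,\sigma(v)\}:v\in V\}$ consists of $n$ distinct edges of $G$ and makes every vertex have degree exactly $2$. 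Thus this edge set is a genuine $2$-factor of $G$ whose cycles are exactly the cycles of $\sigma$.

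Finally I would apply Lemma~\ref{NotTooManyCycles} to $B$ with the parameter $\alpha/2>0$ in place of $\alpha$: it yields that the number of permutations $\sigma$ with $B\ge A(\sigma)$ and at most $s^{*}=\sqrt{n\ln n}$ cycles equals $(1+o(1))^n\big(\tfrac{\alpha n}{2e}\big)^n$, which is positive for all large $n$. Any such $\sigma$ gives, via the previous paragraph, a $2$-factor of $G$ with at most $\sqrt{n\ln n}$ cycles, all of length at least $3$. The only step that needs genuine care is this reduction itself — checking that the Euler orientation is $k$-regular and that a fixed-point- and transposition-free permutation contained in $B$ really corresponds to a $2$-factor of $G$ with the same set of cycles; once that is in place, Lemma~\ref{NotTooManyCycles} is used verbatim and no further difficulty arises.
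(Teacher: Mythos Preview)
Your proposal is correct and follows essentially the same approach as the paper: take an Eulerian orientation of $G$ (using that $\alpha n$ is even), apply Lemma~\ref{NotTooManyCycles} to the resulting $(\alpha n/2)$-regular $0$-$1$ matrix, and observe that any permutation contained in this matrix yields a genuine $2$-factor of $G$ with cycles of length at least $3$ because the orientation forbids digons. Your write-up is in fact slightly more explicit than the paper's in spelling out why fixed points and transpositions are excluded and why the resulting edge set is $2$-regular.
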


\textbf{Proof.} Since $\alpha n$ is even, $G$ has an Eulerian
orientation $\overrightarrow{E}$ (if $G$ is not connected, then find
such an orientation for every connected component). Assume that
$V(G)=[n]$ and let $A$ be an $n\times n$ matrix of $0$ and $1$s such
that $A_{ij}=1$ if and only if $(i,j)\in \overrightarrow{E}$. Note
that $A$ is an
 $(\alpha n/2)$-regular $n\times n$ matrix, and therefore, by Lemma
 \ref{NotTooManyCycles} we conclude that there exists a permutation $\sigma \in S_n$
 such that $A\geq A(\sigma)$ and with at most
 $\sqrt{n \ln n}$ cycles in its cyclic form. Since every such
 permutation defines a $(\leq 2)$-factor of $G$, and since each cycle is
 built by out-edges of the orientation $\overrightarrow{E}$, we conclude that the shortest possible such cycle is
 of length at least $3$. {\hfill $\Box$ \medskip\\}

\subsection{Rotations}

The most useful tool in turning a path $P$ into a Hamilton cycle is
the P\'osa rotation-extension technique (see \cite{Posa}). Motivated
by this technique, in this section we establish tools for turning a
path into a Hamilton cycle under certain assumptions suitable for
proving our main results.

First we need the following notation. Given a path $P=v_0v_1\ldots
v_k$ in a graph $G$ and a vertex $v_i\in V(P)$, define
$v_i^+=v_{i+1}$ and $v_i^-=v_{i-1}$ ($v_0^-=v_k$ and $v_k^+=v_0$).
For a subset $I\subseteq V(P)$, we define $I^+=\{v^+:v\in I\}$ and
$I^-=\{v^-:v\in I\}$.

Now, given a dense graph and a path in it, the following lemma
enables us to obtain a longer path with only few rotations.

\begin{lemma} \label{rotations-dense}
Let $G$ be a graph on $n$ vertices and with $\delta(G)\geq n/2$. Let
$P_0$ be a path in $G$. Then there exist two vertices $a, b \in P_0$
and a path $P^*$ in $G$ connecting $a$ to $b$ so that:
\begin{enumerate} [(i)]
\item $V(P^*)=V(P_0)$.
\item $|E(P_0)\Delta E(P^*)|\leq 4$.
\item $ab\in E(G)$ and the cycle obtained by adding this edge to $P^*$ is a Hamilton cycle, or $G$ contains an edge between $\{a,b\}$ and
$V(G)\setminus V(P^*)$.
\end{enumerate}
\end{lemma}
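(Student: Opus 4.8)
The plan is to prove Lemma~\ref{rotations-dense} by a standard P\'osa rotation argument, but carefully tracking the number of edges that change so that at most two rotations are ever performed. First I would write $P_0 = v_0 v_1 \cdots v_k$. If $v_0$ already has a neighbour outside $V(P_0)$, or $v_k$ does, or $v_0 v_k \in E(G)$, then we may take $P^* = P_0$, $\{a,b\} = \{v_0,v_k\}$, and we are done with $|E(P_0) \triangle E(P^*)| = 0$, so assume $N_G(v_0) \cup N_G(v_k) \subseteq V(P_0)$ and $v_0 v_k \notin E(G)$. The idea is to perform at most two rotations fixing the endpoint $v_0$, producing a path $P^*$ with $V(P^*) = V(P_0)$, with $|E(P_0) \triangle E(P^*)| \le 4$ (each rotation adds one edge and deletes one edge, changing the symmetric difference by at most $2$), and such that the new endpoint $b$ — together with $a := v_0$ — satisfies condition $(iii)$.

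The key step is the degree/pigeonhole argument. Let $S$ be the set of endpoints reachable from $v_0$ by at most two rotations (along $P_0$), i.e. $S$ contains $v_k$ and all vertices obtained after one or two rotations. By the standard P\'osa lemma, if no vertex of $S$ has a neighbour outside the current path and no edge closes the path into a Hamilton cycle, then $N_G(S) \subseteq V(P_0)$ and moreover $|N_G(S)| \le 2|S| + O(1)$ — more precisely, for the booster-type statement one shows $|S^+ \cup S^-| \le 2|S|$ roughly, while $N_G(S) \subseteq S^+ \cup S^-$ up to a few exceptional vertices. Since $\delta(G) \ge n/2$, any single endpoint $b \in S$ has $|N_G(b)| \ge n/2$, and if we can show that after at most two rotations the set of reachable endpoints $S$ satisfies $|S| > $ (something forcing $|N_G(S)| > n$ or forcing $N_G(S)$ to meet $V(G) \setminus V(P^*)$), we reach a contradiction; but here $V(P^*)$ may be all of $V(G)$, so the relevant dichotomy is: either some endpoint closes up to a Hamilton cycle, or $N_G(S) \subseteq V(P_0) = V(G)$ and the P\'osa bound $|N_G(S)| \le 2|S|$ combined with $\delta(G) \ge n/2$ forces $|S| \ge n/4$, which after only two rotations is impossible unless... — so actually I would instead argue directly: it suffices to find \emph{one} good endpoint, and two rotations already give enough candidates.

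Let me restructure: fix $a = v_0$. After the first rotation using an edge $v_0 v_i$ with $v_i \in N_G(v_0)$, the new endpoint is $v_{i-1}$; ranging over all $v_i \in N_G(v_0) \subseteq V(P_0)$, we get a set $B_1$ of at least $\delta(G) \ge n/2$ candidate endpoints (the vertices $v_{i-1}$, all distinct). If any $b \in B_1$ has $v_0 b \in E(G)$ with the resulting cycle Hamiltonian, or has an edge to $V(G) \setminus V(P^*)$, we are done with $|E(P_0)\triangle E(P^*)| \le 2$. Otherwise $N_G(B_1) \cup N_G(v_0) \subseteq V(P_0)$; since $|N_G(v_0)| \ge n/2$ and for each $b \in B_1$ the neighbourhood $N_G(b)$ has size $\ge n/2$ but (by P\'osa) a neighbour $v_j$ of $b$ with $v_j \notin \{$path-neighbours of the endpoints$\}$ yields a further rotation — I would do exactly one more rotation from a well-chosen $b \in B_1$ and check that among the second-level endpoints one must work, by counting: the set of forbidden neighbours for the endpoint (those that do not produce progress) has size at most $|B_1^+| + |B_1^-| + (\text{few}) \le n$, but strictly less once we use $\delta(G) \ge n/2$ more carefully together with $v_0 v_k \notin E(G)$, forcing a contradiction. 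The main obstacle will be bookkeeping: ensuring the rotations only change four edges total, ensuring the endpoint sets are genuinely large (distinctness of the $v_{i\pm1}$), and handling the degenerate boundary cases (short paths, endpoints adjacent on $P_0$, $V(P^*) = V(G)$ versus $V(P^*) \subsetneq V(G)$) without the constant blowing past $4$.
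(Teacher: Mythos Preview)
Your rotation bookkeeping contains a genuine error: you write ``fix $a=v_0$'' and then rotate using an edge $v_0v_i$, but that rotation sends $v_0$ into the interior of the path (the new endpoints are $v_{i-1}$ and $v_k$), so $v_0$ can no longer be the endpoint $a$ of $P^*$. If you instead fix $v_0$ and rotate with edges of $v_k$, then after one rotation the new endpoints are $\{v_{i+1}:v_i\in N(v_k)\}$, and asking whether one of these lies in $N(v_0)$ is precisely the question whether $N(v_0)^-\cap N(v_k)\neq\emptyset$. That holds by pigeonhole: both sets lie inside $V(P_0)\setminus\{v_k\}$, each has size at least $n/2$, and $|V(P_0)\setminus\{v_k\}|\le n-1$. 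So a single rotation already closes $P_0$ into a cycle, and the second rotation round, the P\'osa expansion inequality $|N(S)|\le 2|S|$, and the whole endpoint-growth scheme are unnecessary.

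This is exactly the paper's argument, phrased as an Ore-type crossing rather than a rotation: once $N(v_0)\cup N(v_k)\subseteq V(P_0)$, pick $v_i\in N(v_0)^-\cap N(v_k)$ and form the cycle $C=v_0\cdots v_iv_kv_{k-1}\cdots v_{i+1}v_0$ on $V(P_0)$ at the cost of three edge changes. The remaining case --- $C$ not a Hamilton cycle --- is the one your outline does not actually close: there is no reason a \emph{rotated endpoint} should have a neighbour outside $V(P_0)$, and your counting never establishes this. The paper sidesteps the issue entirely by using connectivity of $G$ (immediate from $\delta(G)\ge n/2$) to find \emph{some} $v\in V(C)$ with a neighbour $u\notin V(C)$, and then reopening $C$ at $v$ to obtain $P^*$, spending one further edge change for a total of at most four. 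Your two-rotation budget already exhausts the allowance of four and leaves no room for this close-and-reopen step, so even with the bookkeeping fixed the non-spanning case would not go through as written.
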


\textbf{Proof.} Let $P_0=v_0\ldots v_k$ be a path in $G$. If there
exists an edge $v_0v\in E(G)$ or $v_kv\in E(G)$ for some $v\notin
V(P_0)$, then by setting $P_0=P^*$, $a=v_0$ and $b=v_k$ we are done.
Assume then that there is no such edge. In particular, it means that
$N(v_0)\cup N(v_k) \subseteq V(P_0)$. First, we claim that there
must be a vertex $v\in N(v_0)^-$ such that $vv_k\in E(G)$.
Otherwise, we have that $N(v_k)\subseteq
\left(V(P_0)\setminus\{v_k\}\right)\setminus N(v_0)^-$. Since
$\delta(G)\geq n/2$ and since $|V(P_0)\setminus\{v_k\}|\leq n-1$ we
conclude that $|\left(V(P_0)\setminus\{v_k\}\right)\setminus
N(v_0)^-|<n/2$ which is clearly a contradiction.

Let $v_i\in N(v_0)^-$ be such vertex with $v_iv_k\in E(G)$. Notice
that $C=v_0v_1\ldots v_i v_kv_{k-1}\ldots v_{i+1}v_0$ is a cycle on
the vertex set $V(P_0)$, obtained be deleting one edge from $P$ and
adding two new edges. If $C$ is a Hamilton cycle then we are done.
Otherwise, since $G$ is a connected graph (this follows easily from
$\delta(G)\geq n/2$), there exist two vertices $v\in V(C)$ and $u\in
V(G)\setminus V(C)$ such that $vu\in E(G)$. By deleting an edge $vw$
from $C$ and by denoting $a=v$ and $b=w$ we get the desired
path.{\hfill $\Box$\medskip\\}

In the following lemma we prove that every dense graph $G$ contains
a subgraph $H$ with some nice pseudorandom properties for which
$\reg(G)$ and $\reg(G-H)$ are relatively close to each other. We
will use this subgraph $H$ to form edge disjoint Hamilton cycles
from a set of edge disjoint $2$-factors. This is crucial for the
proof of Theorem \ref{AppRegConj}. Before stating the lemma we
introduce the following notation which will be used in its proof and
in later sections. An $r$-factor of an oriented graph is a spanning
subgraph with all in- and out-degrees equal to $r$.

\begin{lemma} \label{RotationsGraph}
For every $0<\varepsilon<1/4$ and $0<\alpha<\epsilon^2$, there exist
$\beta>0$ and $n_0:=n_0(\varepsilon,\alpha)$ such that for every
$n\geq n_0$ the following holds. Suppose that:
\begin{enumerate} [$(i)$]
\item $G$ is a graph on $n$ vertices, and
\item $\delta(G)\geq (1/2+\varepsilon$)n.
\end{enumerate}
Then $G$ contains a subgraph $H\subset G$ with the following
properties:
\begin{enumerate}[$(P1)$]

\item $G'=G-
E(H)$ is $r$-regular and $r$ is an even integer which satisfies $r\geq
(1-\varepsilon/2)\emph{\reg}(G)$;
\item $\delta(H)\geq \varepsilon n/8$;
\item for every subset $S\subset V(G), |S|=\alpha n$ and for every subset $E'\subset E(H)$ of size
$|E'|\leq \beta n^2$, we have $|N_{H-E'}(S)\setminus S|\geq
(1/2+\varepsilon/4)n$ ;
\item $H-E'$ is a connected graph for every $E'\subset E(H)$ such that $\delta(H-E')\geq \alpha n$ and $|E'|\leq \beta n^2$.
\end{enumerate}

\end{lemma}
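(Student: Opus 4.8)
The plan is to obtain $H$ as $G\setminus E(G')$ for a carefully chosen even $r$-factor $G'$ of $G$, after first planting inside $H$ a sparse random ``skeleton'' responsible for the pseudorandom properties $(P3)$--$(P5)$. Fix a small constant $p=p(\varepsilon)>0$ and let $H_1$ be the random subgraph of $G$ obtained by keeping each edge of $G$ independently with probability $p$; set $\beta:=p\alpha\varepsilon/8$. I would first record two properties of $H_1$, each holding with high probability. $(a)$ $d_{H_1}(v)=(1+o(1))p\,d_G(v)$ for every $v$, which is immediate from Lemma~\ref{Che} since $d_G(v)=\Theta(n)$. $(b)$ $|E_{H_1}(S,T)|\ge\beta n^2$ for all disjoint $S,T$ with $|S|=\alpha n$ and $|T|\ge(1-\varepsilon)n/2$. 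For $(b)$ one first notes that, because $\delta(G)\ge(1/2+\varepsilon)n$, every $s\in S$ has at most $n-|T|\le(1/2+\varepsilon/2)n$ non-neighbours and hence at least $\varepsilon n/2$ neighbours in $T$; thus $|E_G(S,T)|\ge(\alpha\varepsilon/2)n^2$. Since $|E_{H_1}(S,T)|\sim\mathrm{Bin}(|E_G(S,T)|,p)$, Lemma~\ref{Che} bounds the failure probability for a fixed pair by $e^{-\Omega(n^2)}$, which survives a union bound over the fewer than $4^n$ pairs $(S,T)$.

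Next I pass to $G_2:=G\setminus E(H_1)$. By $(a)$, choosing $p$ small enough in terms of $\varepsilon$ gives $\delta(G_2)\ge(1-p)\delta(G)-o(n)\ge(1/2+\varepsilon/2)n$, and moreover, fixing any $\reg(G)$-regular spanning subgraph $D$ of $G$ (which exists, with $\reg(G)$ even, by definition of $\reg(G)$), the graph $D\cap G_2$ is, again by $(a)$ and Lemma~\ref{Che}, nearly $(1-p)\reg(G)$-regular. Using the $r$-factor results of Section~\ref{sec::tools} — applying Lemma~\ref{AlmostRegularToRegular}$(ii)$ to $D\cap G_2$ when its degree $(1-p)\reg(G)$ is at least $n/2$, and Lemma~\ref{AlmostRegularToRegular}$(i)$ (or Theorem~\ref{regularsubgraphdense}) to $G_2$ directly otherwise — I would extract an $r$-factor $G'\subseteq G_2$ with $r$ even and $(1-\varepsilon/2)\reg(G)\le r\le\delta(G)-\varepsilon n/4$; that this target interval for $r$ is nonempty is a short computation from $\reg(G)\le\delta(G)$ and $\delta(G)\ge(1/2+\varepsilon)n$. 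Finally put $H:=G\setminus E(G')$, so that $H\supseteq H_1$.

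It remains to check $(P1)$--$(P5)$. $(P1)$ holds by the choice of $G'$. For $(P2)$, $d_H(v)=d_G(v)-r\ge\delta(G)-r\ge\varepsilon n/4$. $(P3)$ is inherited from $H_1$ via $(b)$, since $H\supseteq H_1$. $(P4)$ follows from $(P3)$: if some $S$ with $|S|=\alpha n$ and some $E'$ with $|E'|=o(n^2)$ had $|N_{H-E'}(S)\setminus S|<(1/2+\varepsilon/4)n$, then $W:=(V\setminus S)\setminus N_{H-E'}(S)$ would satisfy $|W|>(1/2-\varepsilon/4-\alpha)n\ge(1-\varepsilon)n/2$ (using $\alpha<\varepsilon^2<\varepsilon/4$) and receive no edge of $H-E'$ from $S$, whereas $(P3)$ gives $|E_H(S,W)|\ge\beta n^2>|E'|$, a contradiction. $(P5)$ follows similarly: if $H-E'$ were disconnected with $\delta(H-E')\ge\alpha n$, its smallest component $C$ would have $|C|\ge\alpha n$ (otherwise a vertex of $C$ has degree $<\alpha n$ in $H-E'$), and then, choosing $S_0\subseteq C$ with $|S_0|=\alpha n$ and $W:=V\setminus C$ (of size $\ge n/2\ge(1-\varepsilon)n/2$), we would have $|E_{H-E'}(S_0,W)|=0$ but $|E_H(S_0,W)|\ge\beta n^2>|E'|$ by $(P3)$, again a contradiction.

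The step I expect to be most delicate is producing, inside $G\setminus E(H_1)$, an $r$-factor that is simultaneously even and of degree at least $(1-\varepsilon/2)\reg(G)$ — this is where one must take $p$ (and the auxiliary parameters of Lemma~\ref{AlmostRegularToRegular}) sufficiently small in terms of $\varepsilon$, and may need to split into cases according to how $\reg(G)$ compares with $\delta(G)$. By contrast the pseudorandom part is cheap: once the edge-by-edge random skeleton $H_1$ is in place, the only real estimate is the one-line Chernoff-plus-union-bound argument for $(b)$, and $(P4)$, $(P5)$ are purely formal consequences of $(P3)$. (If one preferred to avoid randomness, one could instead fix an arbitrary admissible $r$-factor and then spread it out by edge-switchings along alternating $4$-cycles with its complement until $(P3)$ holds; I would use the random construction, as it makes the concentration transparent.)
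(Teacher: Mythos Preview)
Your overall architecture is sound, and your derivations of $(P2)$, $(P4)$ and $(P5)$ from $(P3)$ are exactly how the paper proceeds. But your route to $(P1)$ and $(P3)$ is genuinely different from the paper's, and the step you yourself flag as ``most delicate'' has a real gap.

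The paper never samples edges independently. It fixes a $\reg(G)$-factor $R$, gives it an Eulerian orientation $\overrightarrow{R}$, and then peels off $\Theta(\varepsilon n)$ \emph{uniformly random $1$-factors} $F_1,F_2,\ldots$ of $\overrightarrow{R}$ one at a time; $H$ is the union of these (undirected) $F_i$ together with $G\setminus R$, and $G'=R\setminus\bigcup_i F_i$. Since each $F_i$ is $2$-regular as an undirected graph, $G'$ is \emph{automatically} regular of even degree $\reg(G)-\Theta(\varepsilon n)$, so $(P1)$ is free. The cost is that $(P3)$ is no longer a Chernoff bound: the paper bounds the probability that a random $1$-factor of a regular bipartite graph avoids a dense edge set by a permanent ratio, using Theorems~\ref{VanDerWaerden} and~\ref{Bregman}, and then iterates over the $\Theta(n)$ factors and union-bounds over the $\le 4^n$ pairs $(S,T)$.

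Your route --- Bernoulli sampling $H_1\subseteq G$ followed by extracting a large even factor from $G_2=G\setminus H_1$ --- makes $(P3)$ a clean Chernoff computation, but shifts the whole difficulty to $(P1)$, and the factor lemmas you invoke do not cover all cases. Take $\varepsilon$ small (say $\varepsilon=1/100$), $\reg(G)$ close to $n/2$, and $G$ far from regular (e.g.\ $\delta(G)=(1/2+\varepsilon)n$ but $\Delta(G)$ much larger). Then for any small $p$:
\begin{itemize}
\item $D\cap G_2$ has all degrees $(1-p)\reg(G)+o(n)<n/2$, so no constant $\xi>0$ satisfies the hypothesis $(1/2+\xi)n\le\Delta(D\cap G_2)$ of Lemma~\ref{AlmostRegularToRegular}(ii);
\item Lemma~\ref{AlmostRegularToRegular}(i) applied to $G_2$ only yields even $r\le\tfrac{1}{2}\big(\delta(G_2)+\sqrt{n(2\delta(G_2)-n)}\big)\approx(1/4+\sqrt{\varepsilon/2})n$, far below the required $(1-\varepsilon/2)\reg(G)\approx n/2$;
\item Lemma~\ref{AlmostRegularToRegular}(ii) applied to $G_2$ itself fails because $G_2$ is not nearly regular.
\end{itemize}
No choice of $p$ rescues this regime, and neither Lemma~\ref{AlmostRegularToRegular} nor Theorem~\ref{regularsubgraphdense} produces a large factor in a merely near-regular graph whose degrees sit below $n/2$. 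The difficulty is intrinsic to independent edge deletion: it destroys the exact regularity of $D$, and recovering it afterward is precisely the hard step that the paper's random-$1$-factor mechanism sidesteps by never leaving the regular world.
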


\textbf{Proof of Lemma \ref{RotationsGraph}.} Let $R$ be a
$\reg(G)$-factor of $G$ and observe by Theorem
\ref{regularsubgraphdense} that $\reg(G)\ge n/4$. Since $\reg(G)$ is
even, we can find an Eulerian orientation $\overrightarrow{E}$ and
obtain a $\reg(G)/2$-regular oriented graph
$\overrightarrow{R}=(V(G),\overrightarrow{E})$. Now, choose a
collection $\mathcal F$ of $t:=\varepsilon n/16\leq \varepsilon
\cdot \reg(G)/4$ edge-disjoint random $1$-factors from
$\overrightarrow{R}$ as follows. Let
$\overrightarrow{R}_0:=\overrightarrow{R}$, and for $i:=1,\ldots,t$
do: let $F_i$ be a $1$-factor of $\overrightarrow{R}_{i-1}$ chosen
uniformly at random among all such $1$-factors, and let
$\overrightarrow{R}_i:=\overrightarrow{R}_{i-1}-F_i$ (the existence
of such factors follows immediately from the fact that
$\overrightarrow{R}_{i-1}$ is regular and Hall's Marriage Theorem).
Delete the orientations of edges in every $F\in \mathcal F$ and let
$H$ be the graph spanned by all of these edges (that is, $\cup_{F\in
\mathcal F} E(F)$) and the edges of $G-R$. We prove that with high
probability, $H$ satisfies all the properties stated in the theorem.

Properties $(P1)$ and $(P2)$ follow immediately from the definition
of $H$ and from Theorem \ref{regularsubgraphdense}.

For proving $(P3)$, it is enough to prove that for every two disjoint subsets $S,T\subseteq V(G)$
of size $|S|=\alpha n$ and $|T|\geq \frac{(1-\varepsilon)n}{2}$, we
have $|E_H(S,T)|\geq \beta n^2$. Property $(P3)$ thus follows immediately using the fact that $|S|=\alpha n\leq \varepsilon^2n\leq \varepsilon n/4$. Indeed,
given a subset $S\subset V(G)$ for which $|S|=\alpha n$, 
the number of edges (in $H$) between $S$ to every subset
of size $(1/2-\varepsilon/2)n$ is $\Theta(n^2)$. Therefore, for some
small constant $\beta>0$, by removing at most $\beta n^2$ edges one
cannot delete all the edges between two such sets. It follows that
$|N_{H-E'}(S)\setminus S|\geq (1/2+\varepsilon/2-\alpha) n\geq
(1/2+\varepsilon/4)n$ as required.

To this end, let $S,T\subseteq V(G)$ be two disjoint subsets
for which $|S|=\alpha n$ and $|T|=\frac{(1-\varepsilon)n}{2}$. Since
$\delta(G)\geq (1/2+\varepsilon)n$, it follows that $d(v,T)\geq
\varepsilon n/2$ for every $v\in S$. Therefore, $|E_G(S,T)|\geq
|S|\varepsilon n/2=\frac{\varepsilon\cdot \alpha}{2} n^2$. Now, let
$\beta$ be a fixed constant smaller than $\frac{\varepsilon \cdot
\alpha}{4}$ (to be determined later), and note that if
$|E_{G-R}(S,T)|\geq \frac{\varepsilon\cdot \alpha}{4} n^2$, then we
are done. Otherwise, we have $|E_R(S,T)|\geq \frac{\varepsilon\cdot
\alpha}{4} n^2$. We wish to bound from above the probability that
for two such subsets $S$ and $T$, the $2$-factors in $H$ use at most
$\beta n^2$ edges from $E_R(S,T)$. For this end, consider
$\overrightarrow{R}$ again and let $A$ be an $n\times n$, $0/1$
matrix for which $(A)_{ij}=1$ if and only if $ij\in
\overrightarrow{E}$. Since $A$ is $\reg(G)/2$-regular, by Theorem
\ref{VanDerWaerden} we conclude that
$$per(A)\geq \left(\frac{\reg(G)}{2e}\right)^n.$$ Now, note that if $A'$
is a matrix which is obtained from $A$ by deleting $c n^2/2$ many
$1$'s (where $c>0$ is some positive constant), then by Theorem
\ref{Bregman} we have
$$per(A')\leq (1+o(1))^n\left(\frac{\reg(G)-c n}{2e}\right)^n.$$
Now, picking a $1$-factor $F$ of $\overrightarrow{R}$ at random, the
probability that for some fixed subset $E_0\subseteq E_R(S,T)$ of
size at most $\beta n^2$ the $1$-factor $F$ does not use any edges
from from $E_R(S,T)\setminus E_0$ is bounded from above by
$\frac{per(A')}{per(A)}$, where $c=2\varepsilon\alpha/4-2\beta$
(recall that $|E_R(S,T)|\geq \varepsilon \alpha n^2/4$). Note that
when we remove a $1$-factor from $\overrightarrow{R}$, the new graph
remains regular (the in- and out-degrees decrease by exactly $1$).
Therefore, while choosing the $(i+1)$st factor $F_{i+1}$, using the
fact that $R_{i}$ is $(\reg(G)/2-i)$-regular and the estimation on
$per(A')$ and $per(A)$ mentioned above, we obtain that the
probability for not touching edges in $E_R(S,T)\setminus E_0$ is
upper bounded by
$$(1+o(1))^n\left(\frac{\reg(G)-cn-2i}{\reg(G)-2i}\right)^n.$$
All in all, we conclude that for some $0\leq\delta<1$, the
probability for the existence of such a set $E_0\subseteq E_R(S,T)$
of size $\beta n^2$ for which none of the $1$-factors in $\mathcal
F$ uses edges from $E_R(S,T)\setminus E_0$ is at most
$$(1+o(1))^{nt}\binom{n^2}{\beta n^2}\cdot\prod_{i=1}^t\left(\frac{\reg(G)-cn-2i}{\reg(G)-2i}\right)^n \leq
(1+o(1))^{nt}\left(\frac{e}{\beta}\right)^{\beta
n^2}\delta^{nt}=\delta^{\Theta(n^2)}.$$ Indeed, recall that
$t=\varepsilon n/16$ and that by Theorem \ref{regularsubgraphdense}
we have (say) $\reg(G)\geq n/5$, and therefore, if we require that
$\beta< \frac{\varepsilon\alpha}{8}$, then for example
$\delta=1-\frac{5\varepsilon\alpha}{4}$ is such that
$\frac{\reg(G)-cn-2i}{\reg(G)-2i}\leq \delta$ holds for every $i\leq
t$. All in all, for a small enough $\beta$ we have
$\left(\frac{e}{\beta}\right)^{\beta}\delta^{\varepsilon/16}=\delta^{\Theta(1)}$
and the last equality holds. Now, by applying the union bound we get
that the probability for having two such sets is at most $4^n\cdot
\delta^{\Theta(n^2)}=o(1)$.

For $(P4)$, note that from the minimum degree condition we have that
every component of $H-E'$ has size at least $\alpha n$. Now, by
$(P3)$ we have that every connected component is in fact of size
more than $n/2$ even after deleting at most $\beta n^2$ many edges.
This completes the proof. {\hfill $\Box$
\medskip}

In the next lemma, using some ideas from \cite {SV}, we prove that in a graph with good
expansion properties, every non-Hamilton path can be extended by changing only a few edges.

\begin{lemma} \label{LongPathToCycle}
For every $0<\varepsilon<1/200$ and a sufficiently large integer $n$
the following holds. Suppose that:
\begin{enumerate}[(1)]
\item $H$ is a graph on $n$ vertices, and
\item $\delta(H)\geq \varepsilon n/8$, and
\item $|N_H(S)\setminus S|>(1/2+\varepsilon/4)n$ for every subset
$S\subset V(H)$ of size $|S|=\varepsilon^3 n$.
\end{enumerate}
Then for every path $P$ with $V(P)\subseteq V(H)$ ($P$ does not
necessarily need to be a subgraph of $H$), there exist a pair of vertices
$a, b$  and a path $P^*$ in $H\cup P$ connecting these vertices so that:
\begin{enumerate} [(i)]
\item $V(P^*)=V(P)$, and
\item $|E(P)\Delta E(P^*)|\leq 8$, and
\item $ab\in E(H)$ and the cycle obtained by adding this edge is a Hamilton cycle, or $H\cup P$ contains an edge between $\{a,b\}$ and
$V(H)\setminus V(P^*)$.
\end{enumerate}
\end{lemma}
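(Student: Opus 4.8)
The plan is to run the P\'osa rotation-extension technique inside $H \cup P$, starting from the given path $P$, but to be careful that we only perform a bounded number of rotations. First I would check the trivial case: if one of the two endpoints of $P$ has a neighbor in $H$ outside $V(P)$, then we are done immediately with $P^* = P$ (taking $a,b$ to be the endpoints of $P$), since condition (iii) is satisfied via the second alternative. So assume from now on that $N_H(v) \subseteq V(P)$ for both endpoints $v$ of $P$, and in particular that $|V(P)|$ is large — in fact $|V(P)| \ge \delta(H) + 1 \ge \varepsilon n/4 + 1$; more importantly, since $V(P) \supseteq N_H(\{v_0,v_k\})$ and, by hypothesis (3), expansion gives that $V(P)$ must be quite large once we apply (3) to a small subset of it. (We only need $|V(P)| \ge \varepsilon^3 n$ to invoke (3), which holds as long as the path is long; if $|V(P)| < \varepsilon^3 n$ then hypothesis (2) already forces an edge of $H$ from an endpoint of $P$ to outside $V(P)$, and we are in the trivial case.)

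Next I would do exactly one ``rotation step'' as in the proof of Lemma \ref{rotations-dense}. Fix the path $P = v_0 v_1 \cdots v_k$ (oriented arbitrarily), and look for a vertex $v_i \in N_H(v_0)$ such that $v_{i-1} \in N_H(v_k)$, i.e.\ $v_{i-1} \in N_H(v_k)$ and $v_i \in N_H(v_0)$ with $v_i = v_{i-1}^{+}$ along $P$. If such an index exists, then $C := v_0 v_1 \cdots v_{i-1} v_k v_{k-1} \cdots v_i v_0$ is a cycle on $V(P)$ obtained from $P$ by deleting one edge of $P$ and adding two edges of $H$, so $|E(P) \triangle E(C)| \le 3$. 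If $C$ is a Hamilton cycle of $H \cup P$ we take any edge of $C$, delete it to get $P^*$ and let $a,b$ be its endpoints, satisfying (iii) via the first alternative (in fact we directly produce the Hamilton cycle). Otherwise $V(C) = V(P) \subsetneq V(H)$, and I claim $H \cup P$ has an edge from $V(C)$ to $V(H)\setminus V(C)$: this is where I would use hypothesis (3) applied to any small subset $S \subseteq V(C)$ of size $\varepsilon^3 n$ — its $H$-neighborhood has size $> (1/2+\varepsilon/4)n$, so if $|V(C)|$ itself were $\ge$ that size we'd still be fine by a direct counting argument. Actually the cleanest route: since $|V(C)| = |V(P)|$, either $|V(P)| \le n/2$, in which case by (2) some endpoint already had an outside $H$-neighbor (trivial case, contradiction), or $|V(P)| > (1/2+\varepsilon/4)n$... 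Let me instead use (3) honestly: pick $S$ of size $\varepsilon^3 n$ inside $V(C)$; then $N_H(S)\setminus S$ has size $> (1/2+\varepsilon/4)n$; if this is not contained in $V(C)$ we get an edge leaving $V(C)$, and if it is contained, then $|V(C)| > (1/2+\varepsilon/4)n + \varepsilon^3 n$, and repeating with a disjoint $S'$ (possible since $V(C)$ is now large) forces, together with $\delta(H) \ge \varepsilon n /4$ and the bound $|V(C)| \le n - 1$, the desired outside edge by an inclusion-exclusion count. After deleting one edge $vw$ of $C$ incident to the relevant vertex we obtain $P^*$ with $|E(P)\triangle E(P^*)| \le 4$, well within the allowed $8$.

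The remaining (and main) issue is the case where \emph{no} such index $i$ exists, i.e.\ there is no vertex $v_i$ with $v_i \in N_H(v_0)$ and $v_{i-1} \in N_H(v_k)$. In the Dirac setting (Lemma \ref{rotations-dense}) this cannot happen because $N_H(v_0)^-$ and $N_H(v_k)$ would be disjoint subsets of an $(n-1)$-set each of size $\ge n/2$. Here $\delta(H)$ is only $\varepsilon n/4$, so disjointness is not automatically contradictory from a size count alone. The fix, following the idea I'd attribute to \cite{SV}, is to first perform a short sequence of rotations to boost the endpoint neighborhoods: do one rotation at $v_0$ to replace $v_k$ by a new endpoint $v_j$ (where $v_j^{+} \in N_H(v_0)$, say), obtaining a path $P_1$ with $|E(P)\triangle E(P_1)| \le 2$ and the same vertex set; the set of attainable endpoints after at most two rotations has size $\ge (1/2+\varepsilon/4)n$ by two applications of hypothesis (3) (once to the $H$-neighborhood of $v_0$, once to the set of $v^{+}$'s of those neighbors, i.e.\ a booster argument). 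Once the set $A$ of possible right-endpoints reachable via $\le 2$ rotations satisfies $|A| > (1/2+\varepsilon/4)n$, then either some endpoint in $A$ has an $H$-neighbor outside $V(P)$ (extension, done), or $N_H(a)\subseteq V(P)$ for all $a\in A$; in the latter case, either $V(P)$ is already $> (1/2+\varepsilon/4)n$ so one more rotation closes a Hamilton-or-extendable cycle as above, or we apply (3) to $A$ to get $|N_H(A)| > (1/2+\varepsilon/4)n$ and, since $v_k$ has $\ge \varepsilon n/4$ $H$-neighbors, a counting argument forces an index giving a closing edge between the current left endpoint and $N_H(v_k)$, and we finish as before with a total of at most $8$ edge changes. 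I expect the bookkeeping that keeps the symmetric difference $\le 8$ through the (at most $\le 3$ or so) rotations — two rotations to boost on one side, possibly one more to close — to be the delicate part, together with making sure hypothesis (3) is applied only to sets of the exact size $\varepsilon^3 n$ it is stated for (padding or trimming to that size where necessary, which is harmless since larger sets have at least as large neighborhoods).
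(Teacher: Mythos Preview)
Your handling of the trivial case and of the direct-closure case (finding $v_i \in N_H(v_0)$ with $v_{i-1} \in N_H(v_k)$) is fine, but there is a genuine gap in the main case (your step 3). The claim that ``the set of attainable endpoints after at most two rotations has size $\ge (1/2+\varepsilon/4)n$'' is not justified. After a first rotation using $v_0 v_i \in E(H)$, the new path is $v_{i-1}\cdots v_0 v_i \cdots v_k$; a second rotation from the new endpoint $v_{i-1}$ via an edge $v_{i-1} v_m$ then produces endpoint $v_{m+1}$ if $m<i-1$ but $v_{m-1}$ if $m>i$. Thus the shift direction after two rotations depends on where the second neighbour sits relative to the \emph{first} pivot $v_i$, and since $v_i$ ranges over all of $N_H(v_0)$ --- a set that could be spread across the whole path --- there is no consistent way to read off the two-step endpoint set as a single shift of $N_H(N_H(v_0)^-)$. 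Applying hypothesis~(3) to a subset of first-level endpoints gives you a large \emph{neighbourhood}, not a large set of \emph{reachable endpoints}, and the ``booster'' step you sketch does not bridge this difference.

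The idea you are missing, which the paper takes from \cite{SV}, is to first localize via pigeonhole: partition $P$ into $O(1/\varepsilon)$ intervals of length $\le \varepsilon n/10$, and find short disjoint intervals $I, J$ with $|N_H(v_0)\cap I|,\, |N_H(v_k)\cap J| \ge \varepsilon^3 n$. Now every first pivot lies in $I$, so for any vertex outside $I\cup J$ the shift direction is determined solely by whether it lies in the segment $L$ left of $I$, the segment $M$ between $I$ and $J$, or the segment $R$ right of $J$ --- independently of which pivot in $I$ was used. One then applies~(3) once to a set $I_0^- \subseteq (N_H(v_0)\cap I)^-$ of size $\varepsilon^3 n$ and once to $J_0^+$, obtaining two sets each of size $>n/2$ which must intersect; three explicit cycle constructions (one per region $L,M,R$) handle the possible intersection locations, each using four new $H$-edges and deleting three $P$-edges, and one further deletion to open the cycle (if it is not Hamilton) yields the bound $8$. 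Without this interval localization I do not see how to keep the number of edge changes bounded by an absolute constant. (A minor aside: your argument for finding an edge of $H$ leaving $V(C)$ is overcomplicated --- hypotheses~(2) and~(3) directly imply that $H$ is connected, which is all that is needed there.)
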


\textbf{Proof.} Let $P=v_0v_1\ldots v_k$ be a path. We distinguish
between three cases:

\textbf{Case I:} There exists $v\in V(H)\setminus V(P)$ for which
$v_0v\in E(H)$ or $v_kv\in E(H)$. In this case, by denoting $P^*=P$,
$a=v_0$ and $b=v_k$, we are done.

\textbf{Case II:} $v_0v_k\in E(H)$. Let $C$ be the cycle obtained by
adding the edge $v_0v_k$ to $P$. If $C$ is a Hamilton cycle then we
are done. Otherwise, since $H$ is connected (immediate from
properties (2) and (3)), we can find $v\in V(C)$ and $u\in
V(H)\setminus V(C)$ for which $vu\in E(H)$. Now, let $P^*$ be the
path obtained from $C$ by deleting the edge $vv^+$, $a=v$, $b=v^+$
and we are done.

\textbf{Case III:} $N_H(v_0)\cup N_H(v_k)\subseteq V(P)$ and
$v_0v_k\notin E(H)$. Let $t=\lceil 10/\varepsilon\rceil$ and let
$I_1,\ldots ,I_t$ be a partition of $P$ into $t$ intervals of length
at most $|P|/t\leq \varepsilon n/10$ each. Note that, since
$t=\lceil 10/\varepsilon\rceil$ and since $\varepsilon<1/200$, we
can find $I_i$ for which $|N_H(v_0)\cap I_i|\geq (\varepsilon
n/8)/t\geq \varepsilon^2 n/81$. Similarly there exists an interval
$I_j$ which contains at least $\varepsilon^2 n/81$ neighbors of
$v_k$. If $i\neq j$ then set $I=I_i$ and $J=I_j$. Otherwise, divide
$I_i$ into two intervals such that each of them contains at least
$\varepsilon^2 n/162$ neighbors of $v_0$. Clearly one of them
contains at least $\varepsilon^2 n/162$ neighbors of $v_k$. Hence,
we obtain two disjoint intervals $I$ and $J$ of $P$ such that
$|I|,|J|\leq \varepsilon n/10$ and for which $|N_H(v_0)\cap I|,
|N_H(v_k)\cap J|\geq \varepsilon^2n/160\geq \varepsilon^3n$.

Now, assume that the interval $I$ is to the left of the interval $J$
according to the orientation of $P$ (the case where $I$ is to the
right of $J$ is similar). Let $i_1=\min\{i: v_i\in I\}$ and define
$L:=\{v_0,\ldots, v_{i_1-1}\}$ to be the set of all vertices of $P$
which are to the left of $I$. For $i_2=\max\{i:v_i\in I\}$ and
$i_3=\min\{i:v_i\in J\}$, set $M:=\{v_{i_2+1},\ldots,v_{i_3-1}\}$ to
be the set of all vertices between $I$ and $J$. Similarly, set
$R:=\{v_{i_4+1},\ldots,v_k\}$ to be the set of all vertices which
are to the right of $J$ in $P$ (where $i_4=\max\{i:v_i\in J\}$). We
prove that by a sequence of at most four additions and at most three
deletions of edges we can turn $P$ into a cycle $C$ on $V(P)$, and
then the result follows exactly as described in Case II (deleting at
most one more edge). Let $I_0\subseteq N_H(v_0)\cap I$ and
$J_0\subseteq N_H(v_k)\cap J$ be two subsets of size exactly
$\varepsilon^3 n$. Let
$$N:=\left(N_H(I_0^-)^+\cap L\right)\cup \left(N_H(I_0^-)^-\cap
M\right)\cup \left(N_H(I_0^-)^+\cap R \right).$$
Then, by Property $(3)$ we have
$|N|\geq (1/2+\varepsilon/4)n -|I|-|J|>n/2$
and also $|N_H(J_0^+)|>n/2$. Therefore we conclude that $N\cap N_H(J_0^+)\neq
\emptyset $ and need to consider only the following three scenarios:

\vspace{0.05cm}

($a$)\,\, $\left(N_H(I_0^-)^+\cap L\right)\cap N_H(J_0^+)\neq \emptyset$. Let $v^+\in \left(N_H(I_0^-)^+\cap L\right)$ and $u^+\in J_0^+$ be such that
$v^+u^+\in E(H)$, and let $w\in I_0$ be such that $w^-v\in E(H)$. Then we
have the following cycle
$$C=v^+\ldots w^-v\ldots v_0w\ldots uv_k\ldots u^+v^+.$$

($b$)\,\, $\left(N_H(I_0^-)^-\cap
M\right)\cap N_H(J_0^+)\neq \emptyset$. Let $v^-\in
(N_H(I_0^-))^-\cap M$ and $u^+\in J_0^+$ be such that $v^-u^+\in
E(H)$, and let $w\in I_0$ be such that $w^-v\in E(G)$. In this case
the cycle is
$$C=v\ldots uv_k\ldots u^+v^-\ldots wv_0\ldots w^-v.$$

($c$)\,\, $\left(N_H(I_0^-)^+\cap R \right)\cap N_H(J_0^+)\neq \emptyset$.
Let $v^+\in \left(N_H(I_0^-)^+\cap R\right)$ and $u^+\in J_0^+$ be
such that $v^+u^+\in E(H)$, and let $w\in I_0$ be such that $w^-v\in
E(H)$. We obtain the following cycle
$$C=v_0\ldots w^-v\ldots u^+v^+\ldots v_ku \ldots wv_0.$$

\vspace{0.05cm}

This completes the proof. {\hfill $\Box$
\medskip}

\subsection{Oriented graphs}

In this subsection we establish tools needed in the proof of Theorem
\ref{CountingHamOriented} which deals with counting the number of
Hamilton cycles in oriented graphs. We start with the following
notion of a \emph{robust} expander due to K\"{u}hn, Osthus and
Treglown \cite{KOT}:

\begin{definition}
Let $G$ be an oriented graph of order $n$ and let $S\subseteq V(G)$.
The $\nu$-\emph{robust outneighborhood} $RN^+_{\nu,G}(S)$ of $S$ is
the set of vertices with at least $\nu n$ inneighbors in $S$. The
graph $G$ is called a \emph{robust} $(\nu,\tau)$-\emph{outexpander}
if $|RN^+_{\nu,G}(S)|\geq |S|+\nu n$ for every $S\subseteq V(G)$
with $\tau n\leq |S|\leq (1-\tau)n$.
\end{definition}

The following fact is an immediate consequence of the definition of
a robust $(\nu,\tau)$-outexpander.

\begin{fact}\label{fact1}
For every $\nu,\nu'>0$ such that $\nu'<\nu$, and for every
sufficiently large integer $n$ the following holds. Suppose that:
\begin{enumerate}[(i)]
\item $G$ is an oriented graph on $n$ vertices, and
\item $G$ is a robust $(\nu,\tau)$-outexpander.
\end{enumerate}
Then every graph $G'$ which is obtained from $G$ by adding a new
vertex (does not matter how) is a robust $(\nu',\tau)$-outexpander.
\end{fact}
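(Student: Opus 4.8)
The plan is to verify the defining inequality of a robust $(\nu',\tau)$-outexpander for $G'$ directly, using that erasing the new vertex recovers $G$ together with the observation that the slack $\nu-\nu'>0$ translates into an additive slack of order $n$, which will absorb all the lower‑order discrepancies caused by the change of ground‑set size.

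Write $V(G')=V(G)\cup\{w\}$, so $|V(G')|=n+1$, with the edges at $w$ oriented in an arbitrary (possibly empty) way. Fix $S\subseteq V(G')$ with $\tau(n+1)\le|S|\le(1-\tau)(n+1)$; the goal is to establish $|RN^+_{\nu',G'}(S)|\ge|S|+\nu'(n+1)$. One may assume $\tau<1/2$, since otherwise the admissible range $\tau(n+1)\le|S|\le(1-\tau)(n+1)$ contains at most one integer for large $n$ and the remaining instances are settled by the same computation (or the condition is vacuous). The key elementary point to record first is that, since every new edge is incident to $w$, the oriented graph induced by $G'$ on $V(G)$ is exactly $G$; hence for $T\subseteq V(G)$ and $v\in V(G)$ the in‑neighbours of $v$ lying in $T$ are the same in $G$ and in $G'$. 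Combined with the inequality $\nu n\ge\nu'(n+1)$ (valid once $n\ge\nu'/(\nu-\nu')$) and with the obvious monotonicity $RN^+_{\mu,\cdot}(T_1)\subseteq RN^+_{\mu,\cdot}(T_2)$ whenever $T_1\subseteq T_2$, this yields $RN^+_{\nu,G}(T)\subseteq RN^+_{\nu',G'}(T)$ for every $T\subseteq V(G)$.

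Now I would pass from $S$ to $T:=S\cap V(G)$; this can only shrink the robust out‑neighbourhood, i.e. $RN^+_{\nu',G'}(T)\subseteq RN^+_{\nu',G'}(S)$, and does not increase $|S|$, while $|T|\ge|S|-1\ge\tau n-1$ and $|T|\le|S|\le(1-\tau)n+1$. Next choose $T'\subseteq V(G)$ with $\tau n\le|T'|\le(1-\tau)n$ differing from $T$ in at most one vertex: take $T'=T$ if $|T|$ already lies in that range; take $T'=T\cup\{z\}$ for an arbitrary $z\in V(G)\setminus T$ if $|T|<\tau n$ (note this forces $w\in S$, hence $|T'|=|S|$); and take $T'\subset T$ of size $\lfloor(1-\tau)n\rfloor$ if $|T|>(1-\tau)n$ (here the assumption $\tau<1/2$ and $n$ large guarantee $\tau n\le\lfloor(1-\tau)n\rfloor$). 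Applying the robust‑expander property of $G$ to $T'$ gives $|RN^+_{\nu,G}(T')|\ge|T'|+\nu n$. Every $v\in RN^+_{\nu,G}(T')$ has at least $\nu n-|T'\setminus T|\ge\nu n-1\ge\nu'(n+1)$ in‑neighbours inside $T\subseteq S$ (computed in $G'$, by the key point above), so $RN^+_{\nu,G}(T')\subseteq RN^+_{\nu',G'}(S)$. Plugging in $|T'|\ge|S|-1$ in the first two cases, and $|T'|=\lfloor(1-\tau)n\rfloor$ together with $|S|\le(1-\tau)(n+1)$ in the third, gives in all cases $|RN^+_{\nu',G'}(S)|\ge|T'|+\nu n\ge|S|+\nu'(n+1)$, because the linear term $(\nu-\nu')n$ eventually dominates the constant $2-\tau+\nu'$.

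There is no genuine obstacle of substance here: the only care required is in tracking the $O(1)$ corrections produced by the passage from ground‑set size $n$ to $n+1$ and by the integrality of $|S|$, and in each case the $\Theta(n)$ gap between the quantity $\nu n$ supplied by the hypothesis and the target term $\nu'(n+1)$ absorbs them once $n$ is sufficiently large.
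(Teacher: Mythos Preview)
Your argument is correct and is precisely the routine verification the paper has in mind: the paper does not supply a proof at all, merely stating that the fact ``is an immediate consequence of the definition of a robust $(\nu,\tau)$-outexpander''. Your write-up carefully tracks the $O(1)$ discrepancies arising from the change of ground-set size and absorbs them using the slack $(\nu-\nu')n$, which is exactly the intended (and only) idea.
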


The following theorem shows that given a robust outexpander $G$
which is almost regular, $G$ contains an $r$-factor with almost the
same degree as the degrees of $G$. Before stating the theorem we
remark that the constants in the hierarchies used to state our
results are chosen from the largest to the smallest. More precisely,
whenever we write something like $0<1/n\ll \nu\ll \tau\ll \alpha <1$
(where n is the order of the graph or digraph), then this means that
there are non-decreasing functions $f : (0, 1] \rightarrow (0, 1]$,
$g : (0, 1] \rightarrow (0, 1]$ and $h : (0, 1] \rightarrow (0, 1]$
such that the result holds for all $0 < \nu, \tau, \alpha< 1$ and
all positive integers $n$ with $\tau \leq f(\alpha)$, $\nu \leq
g(\tau)$ and $1/n \leq h(\nu)$. We will not calculate these
functions explicitly.

\begin{theorem} \label{RobustRFactor}
For every $\alpha>0$ there exists $\tau>0$ such that for all
$\nu\leq \tau$ and $\eta>0$ there exist
$n_0:=n_0(\alpha,\nu,\tau,\eta)$ and
$\gamma:=\gamma(\alpha,\nu,\tau,\eta)>0$ 
such that the following holds. Suppose
that $G$ is an oriented graph with $|V(G)|=n\geq n_0$ satisfying:
\begin{enumerate}[(i)]
\item $d^\pm(v)\in (\alpha\pm \gamma)n$ for every $v\in V(G)$, and
\item $G$ is a robust $(\nu,\tau)$-expander.

\end{enumerate}

Then $G$ contains an $(\alpha-\eta)n$-factor.
\end{theorem}


In order to prove Theorem \ref{RobustRFactor} we need the following
lemma from \cite{KO}.

\begin{lemma}[Lemma 5.2 in \cite{KO}] \label{lemma:KO}
Suppose that $0<1/n\ll \varepsilon\ll \nu\leq \tau\ll \alpha<1$ and
that $1/n\ll \xi\leq \nu^2/3$. Let $G$ be a digraph on $n$ vertices
with $\delta^{\pm}(G)\geq \alpha n$ which is a robust
$(\nu,\tau)$-outexpander.
For every vertex $x$ of $G$, let $n^+_x,n^-_x\in \mathbb{N}$ be such
that $(1-\varepsilon)\xi n\leq n^+_x,n^-_x\leq (1+\varepsilon)\xi n$
and such that $\sum_{x\in V(G)}n^+_x=\sum_{x\in V(G)}n^-_x$. Then
$G$ contains a spanning subdigraph $G'$ such that
$d^+_{G'}(x)=n^+_x$ and $d^-_{G'}(x)=n^-_x$ for every $x\in V(G)$.
\end{lemma}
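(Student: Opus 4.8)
The plan is to turn the prescribed-degrees problem into a single cut inequality via max-flow--min-cut, and then to verify that inequality by splitting on the size of the ``out-side'' set, using the minimum semi-degree of $G$ for the extreme sizes and robust outexpansion for the middle range.

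\emph{Reduction.} First I would form the bipartite ``split'' graph $B$ with parts $V^+=\{v^+:v\in V(G)\}$ and $V^-=\{v^-:v\in V(G)\}$, putting an edge $u^+v^-$ exactly when $uv\in E(G)$. A spanning subdigraph of $G$ realising $d^+(x)=n^+_x$ and $d^-(x)=n^-_x$ for all $x$ is the same object as a subgraph of $B$ in which $\deg(u^+)=n^+_u$ and $\deg(v^-)=n^-_v$. I would then run the obvious flow network ($s\to u^+$ of capacity $n^+_u$; unit-capacity arcs $u^+v^-$ for the edges of $B$; $v^-\to t$ of capacity $n^-_v$); since $\sum_x n^+_x=\sum_x n^-_x=:N$, the desired subgraph exists iff there is an integral $s$--$t$ flow of value $N$, and by max-flow--min-cut together with integrality of maximum flows this holds iff every $s$--$t$ cut has capacity at least $N$. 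Expanding a generic cut and reorganising, this is equivalent, for suitable sets $X,W\subseteq V(G)$, to
\begin{equation}
e_G(X,W)\ge \sum_{x\in X}n^+_x+\sum_{w\in W}n^-_w-N\qquad\text{for all }X,W\subseteq V(G),\tag{$\star$}
\end{equation}
where $e_G(X,W)$ counts the arcs of $G$ directed from $X$ to $W$. So the whole task is to prove $(\star)$.

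\emph{Verifying $(\star)$.} If the right-hand side of $(\star)$ is nonpositive there is nothing to prove; since that right-hand side is at most $(1+\varepsilon)\xi n(|X|+|W|)-(1-\varepsilon)\xi n^2$, I may assume $|X|+|W|>(1-2\varepsilon)n$. I would also record that the right-hand side of $(\star)$ never exceeds $\min\{\sum_{x\in X}n^+_x,\sum_{w\in W}n^-_w\}$. Now split on $|X|$. If $|X|<\tau n$, then because $|X|+|W|>(1-2\varepsilon)n$ and $\xi,\varepsilon,\tau\ll\alpha$ we get $|V\setminus W|<\alpha n-2\xi n$, so every $x\in X$ sends more than $2\xi n$ arcs into $W$, giving $e_G(X,W)\ge 2\xi n|X|\ge\sum_{x\in X}n^+_x$. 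If $|X|>(1-\tau)n$, then $|V\setminus X|<\tau n$, so every $w\in W$ receives more than $(\alpha-\tau)n>2\xi n$ arcs from $X$, giving $e_G(X,W)>2\xi n|W|\ge\sum_{w\in W}n^-_w$. Finally, if $\tau n\le|X|\le(1-\tau)n$, robust outexpansion does the work: $|RN^+_{\nu,G}(X)|\ge|X|+\nu n$, so $|W\cap RN^+_{\nu,G}(X)|\ge|X|+|W|+\nu n-n>(\nu-2\varepsilon)n\ge\nu n/2$, and since each vertex of $RN^+_{\nu,G}(X)$ absorbs at least $\nu n$ arcs from $X$ we obtain $e_G(X,W)\ge\nu^2 n^2/2$; on the other hand the right-hand side of $(\star)$ is at most $\sum_{x\in X}n^+_x\le(1-\tau)(1+\varepsilon)\xi n^2<\xi n^2\le\nu^2 n^2/3<\nu^2 n^2/2$. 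These three cases cover all $X,W$ and establish $(\star)$.

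\emph{Main obstacle.} The reduction and the bookkeeping in the two extreme cases are routine. The genuinely delicate point is the middle-sized case: a crude bound of $2\xi n^2$ on the right-hand side of $(\star)$ does not beat the $\nu^2 n^2/2$ produced by expansion, so one must use that a set $X$ with $|X|\le(1-\tau)n$ is bounded away from all of $V(G)$ in order to push $\sum_{x\in X}n^+_x$ strictly below $\xi n^2$ — and it is precisely here that the hypothesis $\xi\le\nu^2/3$ is consumed. Conceptually, the crux is to arrange the case split so that robust outexpansion is invoked on exactly the size window $[\tau n,(1-\tau)n]$ on which the minimum-semi-degree argument has nothing to say, and to confirm that the two extreme windows are genuinely handled by the degree condition once the ``otherwise trivial'' regime $|X|+|W|\le(1-2\varepsilon)n$ has been removed.
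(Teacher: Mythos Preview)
The paper does not prove this lemma at all: it is quoted verbatim as Lemma~5.2 of K\"uhn--Osthus \cite{KO} and used as a black box in the proof of Theorem~\ref{RobustRFactor}. So there is no ``paper's own proof'' to compare against.

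That said, your argument is correct and is essentially the standard proof of this statement (and, as far as I recall, the one given in \cite{KO}). The max-flow/min-cut reduction to the inequality $(\star)$ is right --- with $X$ the set of $u^+$'s on the $s$-side and $W$ the complement of the set of $v^-$'s on the $s$-side, the cut capacity works out exactly to $N - \sum_{x\in X}n^+_x - \sum_{w\in W}n^-_w + e_G(X,W)$. Your three-case verification is clean: the two extreme ranges of $|X|$ are handled by $\delta^\pm(G)\ge\alpha n$ together with $|X|+|W|>(1-2\varepsilon)n$, and the middle range by robust outexpansion, with the constraint $\xi\le\nu^2/3$ used precisely where you say. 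One cosmetic point: in the middle case you do not actually need the sharper bound $(1-\tau)(1+\varepsilon)\xi n^2<\xi n^2$; the cruder $\sum_{x\in X}n^+_x\le(1+\varepsilon)\xi n^2\le 2\xi n^2\le 2\nu^2 n^2/3$ would still beat $\nu^2 n^2/2$ --- wait, no, $2/3>1/2$, so your refinement via $|X|\le(1-\tau)n$ really is needed, exactly as you flag in your ``main obstacle'' paragraph. Good catch.
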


\begin{proof}{\bf of Theorem \ref{RobustRFactor}} The proof is identical
to the first paragraph of the proof of Corollary 1.2 in \cite{OS}.
For the convenience of the reader we will add it here.

Since in a digraph $G$, whenever $G$ contains an $r$-factor it also
contains an $(r-1)$-factor, we can assume that $\eta$ is
sufficiently small. Now, given $\alpha$ and $\eta$, choose $\tau$
and $\gamma$ so that $0<1/n\ll \gamma\ll \eta\ll \nu\leq \tau \ll
\alpha-\gamma $, and for each $x\in V(G)$ let
$$n^{+}_x:=d^{+}_G(x)-(\alpha-\sqrt{\gamma})n \text{ and } n^{-}_x:=d^{-}_G(x)-(\alpha-\sqrt{\gamma})n.$$

Note that $(\sqrt{\gamma}-\gamma)n\leq n^{\pm}_x\leq
(\sqrt{\gamma}+\gamma)n$ for every $x\in V(G)$, which means that
$$(1-\sqrt{\gamma})\sqrt{\gamma}n\leq n^{\pm}_x\leq
(1+\sqrt{\gamma})\sqrt{\gamma}n.$$

Apply Lemma \ref{lemma:KO} to $G$ with
$\xi=\varepsilon=\sqrt{\gamma}$ and $\alpha:=\alpha-\gamma$, and
obtain a subdigraph $G'$ for which $d^+_{G'}(x)=n^+_x, \text{
}d^-_{G'}(x)=n^-_x$, and therefore the graph $G''=G-G'$ is an
$(\alpha-\sqrt{\gamma})n$-regular digraph on $n$ vertices. Using the
fact that $(\alpha-\sqrt{\gamma})n\geq (\alpha-\eta)n$, completes
the proof. \end{proof}

The following technical lemma is one of the main ingredients in the
proof of Theorem \ref{CountingHamOriented}. We use it to turn a
directed path of length $n-o(n)$ into a directed Hamilton cycle:

\begin{lemma} \label{OrientedHamPathBetweenTwoSets}
For every $\alpha>3/8$  and a sufficiently large integer $n$ the
following holds. Suppose that:
\begin{enumerate}[(i)]
\item $G$ is an oriented graph on $n$ vertices, and

\item $\delta^{\pm}(G)\geq \alpha n$.

\end{enumerate}

Then for every two disjoint subsets $A,B\subseteq V(G)$ with
$|A|=|B|= \alpha n /2$, $G$ contains a Hamilton path which starts
inside $A$ and ends inside $B$.
\end{lemma}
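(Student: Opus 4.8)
The plan is to reduce the statement to finding a Hamilton cycle in an auxiliary oriented graph obtained from $G$ by adding a single extra vertex, and then to exhibit that Hamilton cycle via robust expansion. Set $\varepsilon:=\alpha-3/8>0$. The first step is to record that, because $\delta^{\pm}(G)\ge\alpha n=(3/8+\varepsilon)n$, the oriented graph $G$ is a robust $(\nu,\tau)$-outexpander: for a parameter $\tau=\tau(\varepsilon)>0$, which we may take as small as we like (in particular small compared with $\alpha$), there is $\nu=\nu(\varepsilon,\tau)>0$ such that $G$ is a robust $(\nu,\tau)$-outexpander. This is a known consequence of the Keevash, K\"uhn and Osthus semidegree threshold (see \cite{KKO}). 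It is precisely here that the hypothesis $\alpha>3/8$ enters, and I regard this step --- deriving robust expansion from the semidegree bound, especially for the middle range of set sizes where counting out-edges or counting in-edges alone does not suffice --- as the genuine obstacle; accordingly I would quote the known result rather than reprove it.

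Second, construct the auxiliary oriented graph $G'$ on $n+1$ vertices: add a new vertex $w$ to $G$, together with the edge $w\to a$ for every $a\in A$ and the edge $b\to w$ for every $b\in B$, and no further edges at $w$. Since $A\cap B=\emptyset$, the vertex $w$ is never joined to another vertex by two oppositely oriented edges, so $G'$ is again an oriented graph. Each vertex of $G$ retains its in- and out-degree inside $G'$ (vertices of $A$ gain one inneighbour, vertices of $B$ gain one outneighbour), while $d^{+}_{G'}(w)=d^{-}_{G'}(w)=|A|=\alpha n/2$; hence $\delta^{\pm}(G')\ge\alpha n/2\ge(\alpha/3)(n+1)$, a linear minimum semidegree. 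By Fact~\ref{fact1}, $G'$ is a robust $(\nu',\tau)$-outexpander for every $\nu'<\nu$; fix one such $\nu'$, small relative to $\tau$.

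Third, apply the theorem of K\"uhn and Osthus that a robust $(\nu',\tau)$-outexpander on $N$ vertices with $\delta^{\pm}\ge\eta N$, where $1/N\ll\nu'\le\tau\ll\eta\le 1$, contains a Hamilton cycle (see \cite{KO}); take $N=n+1$ and $\eta=\alpha/3$. The required hierarchy $1/n\ll\nu'\ll\tau\ll\alpha/3$ is guaranteed by having chosen $\tau$ small compared with $\alpha$, then $\nu'$ small compared with $\tau$, and finally $n$ large. Let $C$ be the resulting Hamilton cycle of $G'$. It passes through $w$, entering along an edge $b\to w$ and leaving along an edge $w\to a$; by the construction of $G'$ we have $b\in B$ and $a\in A$. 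Deleting $w$ and its two incident edges from $C$ leaves a directed path on $V(G')\setminus\{w\}=V(G)$ that runs from $a$ to $b$ and uses only edges of $G$, i.e. a Hamilton path of $G$ starting inside $A$ and ending inside $B$, as desired. Beyond the two cited black boxes about robust outexpanders, the only care needed is the parameter bookkeeping just described; the rest is a direct verification.
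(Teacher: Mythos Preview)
Your proposal is correct and follows essentially the same route as the paper: show $G$ is a robust outexpander from the semidegree hypothesis, add a single vertex $w$ with $N^+(w)=A$ and $N^-(w)=B$, invoke Fact~\ref{fact1} to retain robust outexpansion, apply the Hamiltonicity theorem for robust outexpanders, and delete $w$. The only cosmetic differences are in the citations (the paper quotes Lemma~\ref{KO3/8Robust} from \cite{KO} for robust expansion and Theorem~\ref{RobustToHam} from \cite{KOT} for Hamiltonicity, rather than \cite{KKO} and \cite{KO}) and in the choice $\eta=\alpha/3$ versus the paper's $\eta=\alpha/2$; your choice is in fact slightly more careful, since the new vertex has semidegree exactly $\alpha n/2$, marginally below $(\alpha/2)(n+1)$.
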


Before proving Lemma \ref{OrientedHamPathBetweenTwoSets} we need the
following two results which are stated below. The first lemma, due
to K\"{u}hn and Osthus \cite{KO}, asserts that a dense oriented
graph is also a robustly expanding graph.

\begin{lemma}[Lemma 13.1 \cite{KO}] \label{KO3/8Robust}
Let $0<1/n\ll \nu\ll\tau\leq \varepsilon/2\leq 1$ and suppose that
$G$ is an oriented graph on $n$ vertices with
$\delta^+(G)+\delta^-(G)+\delta(G)\geq 3n/2+\varepsilon n$ (where
$\delta(G):=\min_{x\in V(G)}(d_G^+(x)+d_G^-(x))$). Then $G$ is a
robust $(\nu,\tau)$-outexpander.

\end{lemma}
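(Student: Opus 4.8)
This is Lemma~12.1 of K\"uhn--Osthus \cite{KO}; in this paper it is quoted rather than reproved, so what follows is a reconstruction of the argument. I would argue by contradiction: suppose $G$ is \emph{not} a robust $(\nu,\tau)$-outexpander and fix a witnessing set $S$ with $\tau n\le|S|\le(1-\tau)n$ and $|RN^+_{\nu,G}(S)|<|S|+\nu n$. Write $R:=RN^+_{\nu,G}(S)$ and $T:=V(G)\setminus R$, so that every $v\in T$ has fewer than $\nu n$ inneighbours in $S$ and $|T|>n-|S|-\nu n$. Throughout I would exploit that $G$ is oriented (no digons): for any $X\subseteq V(G)$ the number of edges inside $X$ is at most $\binom{|X|}{2}$, and for $A,B\subseteq V(G)$ the number of edges from $A$ to $B$ is at most $|A||B|-\binom{|A\cap B|}{2}$.

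The first step is two complementary double-counts. Counting the out-edges leaving $S$: there are at least $|S|\delta^+(G)$ of them, at most $|T|\nu n<\nu n^2$ of them end in $T$, and the number ending in $R$ is at most $|S||R|$; this forces $|R|\ge\delta^+(G)-\nu n/\tau$, so the claim holds whenever $|S|\lesssim\delta^+(G)$. Symmetrically, every $v\in T$ sends all but fewer than $\nu n$ of its (at least $\delta^-(G)$) in-edges into $V(G)\setminus S$; bounding $e\big(V(G)\setminus S,\,T\big)$, separating the part of $T$ lying in $S$ from the part lying in $V(G)\setminus S$ and applying the orientedness inequalities to the latter, forces $|S|<n-\delta^-(G)+\nu n$ (so if $|S|$ is at least about $n-\delta^-(G)$ one gets $R=V(G)$, done) together with an upper bound on $|T\setminus S|$ of order $n-|S|-\delta^-(G)$; an analogous count through the out-degrees of the vertices of $T\cap S$ (and the no-digon bound inside $S$) bounds $|T\cap S|$ by order $n-\delta^+(G)-\delta^-(G)$.

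Combining the two ranges, a witnessing $S$ can survive only when ``$|S|\lesssim\delta^+(G)$'' and ``$|S|\gtrsim n-\delta^-(G)$'' barely overlap, i.e.\ $\delta^+(G)+\delta^-(G)=n-o(n)$. By the hypothesis $\delta^+(G)+\delta^-(G)+\delta(G)\ge 3n/2+\varepsilon n$ this pins down $\delta(G)\ge n/2+\Omega(\varepsilon n)$: every vertex has total degree $d^+(v)+d^-(v)$ well above $n/2$, even though its outdegree or indegree alone may be small. In this remaining regime I would close the argument by a finer split on $T\cap S$ and $T\setminus S$: each $v\in T\cap S$ has fewer than $\nu n$ inneighbours in $S$ and, by its large total degree together with the no-digon bound between $v$ and $V(G)\setminus S$, many out-neighbours inside $S$, so $e(T\cap S,S)$ is forced to be large while $e(X)\le\binom{|X|}{2}$ forces it to be small; symmetrically each $v\in T\setminus S$ has almost all of its in-edges inside $V(G)\setminus S$, and $e\big(V(G)\setminus S\big)$ is both forced large and bounded by $\binom{n-|S|}{2}$. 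The $\varepsilon n$ slack is exactly what makes these estimates contradict $|T|>n-|S|-\nu n$. (Alternatively, as in Keevash--K\"uhn--Osthus \cite{KKO}, this last regime is handled by stability: either $G$ is $\varepsilon$-far from the extremal H\"aggkvist-type configuration, where a direct count gives expansion, or it is $\varepsilon$-close to it, which contradicts the degree slack.)

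The hard part is this last step. The elementary counting above is essentially tight at the threshold $3n/2$ --- H\"aggkvist's construction sits at $\delta^+(G)+\delta^-(G)+\delta(G)=3n/2-o(n)$ and is not even Hamiltonian, hence not a robust outexpander --- so the $\varepsilon n$ slack must be used in an essential way, and the only leverage distinguishing the true statement from the false ``$3n/2$'' version is the absence of digons, which has to be squeezed carefully in the overlapping-set edge counts.
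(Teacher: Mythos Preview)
The paper does not prove this lemma; it is quoted verbatim from \cite{KO} and used as a black box, exactly as you say in your first sentence. So there is no ``paper's own proof'' to compare against.

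As for your reconstruction: the overall plan (assume a witness $S$, double-count directed edges, exploit the no-digon bounds) is the standard one, and your first count giving $|R|\ge\delta^+(G)-\nu n/\tau$ and hence $|S|\gtrsim\delta^+(G)$ is correct, as is the observation that any $v\in T$ forces $|V\setminus S|\ge\delta^-(G)-\nu n$, i.e.\ $|S|\lesssim n-\delta^-(G)$. But the rest has genuine gaps. First, the intermediate conclusion you draw in step~3, namely $\delta(G)\ge n/2+\Omega(\varepsilon n)$, is automatic from the hypothesis alone (since $\delta(G)\ge\delta^+(G)+\delta^-(G)$ always, one has $2\delta(G)\ge 3n/2+\varepsilon n$), so steps~1--2 are not what buys it; this is a sign that the logical structure is not yet right. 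Second, and more seriously, your claimed ``upper bound on $|T\setminus S|$ of order $n-|S|-\delta^-(G)$'' does not follow from the count you describe: summing $d^-(v,V\setminus S)\ge\delta^-(G)-\nu n$ over $v\in T\setminus S$ and bounding $e(V\setminus S,T\setminus S)\le\binom{|T\setminus S|}{2}+|R\setminus S|\,|T\setminus S|$ yields a \emph{lower} bound on $|T\setminus S|$, not an upper one. The analogous bound you assert on $|T\cap S|$ is likewise unsupported. Finally, the closing paragraph is a plan rather than an argument: you say $e(T\cap S,S)$ is ``forced large'' and simultaneously ``forced small by $e(X)\le\binom{|X|}{2}$'' without naming the set $X$ or writing down the inequality that the $\varepsilon n$ slack is supposed to violate, and the parenthetical appeal to stability is just a pointer to a different proof.

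If you want to carry this through, the actual argument in \cite{KO} tracks the four pieces $S\cap R$, $S\setminus R$, $R\setminus S$, $V\setminus(S\cup R)$ and uses the oriented bound on the \emph{bipartite} pairs (that is, $e(A,B)+e(B,A)\le|A||B|$ for disjoint $A,B$) as well as inside single sets; the three parameters $\delta^+,\delta^-,\delta$ enter the final inequality together, not sequentially.
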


The following theorem states that if a graph $G$ is a robust
outexpander with a linear minimum degree, then $G$ contains a
Hamilton cycle.

\begin{theorem}[Theorem 16 \cite{KOT}]\label{RobustToHam}
Let $1/n\ll \nu\leq \tau\ll \eta<1$, and let $G$ be a digraph on $n$
vertices with $\delta^{\pm}(G)\geq \eta n$ which is a robust
$(\nu,\tau)$-outexpander. Then $G$ contains a Hamilton cycle.
\end{theorem}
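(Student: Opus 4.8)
The plan is to deduce the statement from Theorem \ref{RobustToHam} by an auxiliary‑vertex construction. I would form an oriented graph $G'$ on $n+1$ vertices by adding one new vertex $w$ to $G$, inserting the edge $wa$ for every $a\in A$ and the edge $bw$ for every $b\in B$, and no other edges incident to $w$. Since $A\cap B=\emptyset$, at most one edge joins $w$ to any vertex, so $G'$ is a genuine oriented graph. Any directed Hamilton cycle $C$ of $G'$ must pass through $w$, entering along some edge $bw$ with $b\in N^-_{G'}(w)=B$ and leaving along some edge $wa$ with $a\in N^+_{G'}(w)=A$; deleting $w$ from $C$ then leaves a directed Hamilton path of $G$ that starts at $a\in A$ and ends at $b\in B$, which is exactly what is required. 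So the whole task reduces to producing a Hamilton cycle in $G'$.

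To do that I would verify the hypotheses of Theorem \ref{RobustToHam} for $G'$. For the minimum semidegree: each $v\in V(G)$ satisfies $d^{\pm}_{G'}(v)\ge d^{\pm}_G(v)\ge\alpha n$, and $d^+_{G'}(w)=|A|=\alpha n/2=|B|=d^-_{G'}(w)$, so $\delta^{\pm}(G')\ge\alpha n/2\ge\eta(n+1)$ for $\eta:=\alpha/3$ and $n$ large. For robust outexpansion: from $\delta^{\pm}(G)\ge\alpha n$ every vertex of $G$ has in‑plus‑out degree at least $2\alpha n$, hence $\delta^+(G)+\delta^-(G)+\delta(G)\ge 4\alpha n=3n/2+\varepsilon n$ with $\varepsilon:=4\alpha-\tfrac32>0$ because $\alpha>3/8$. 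Fixing constants with $1/n\ll\nu\ll\tau$, $\tau\le\varepsilon/2$ and $\tau$ additionally small enough that $\tau\ll\eta$, Lemma \ref{KO3/8Robust} makes $G$ a robust $(\nu,\tau)$‑outexpander, and then Fact \ref{fact1} upgrades $G'$, obtained from $G$ by adding the single vertex $w$, to a robust $(\nu',\tau)$‑outexpander for any fixed $\nu'<\nu$. Now $1/n\ll\nu'\le\tau\ll\eta<1$ holds, so Theorem \ref{RobustToHam} yields a Hamilton cycle $C$ of $G'$, and deleting $w$ from $C$ finishes the argument as described above.

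I do not expect a real obstacle here: this is the standard ``Hamilton path between two prescribed sets'' gadget, and the substantive input is entirely outsourced to Lemma \ref{KO3/8Robust}, Fact \ref{fact1} and Theorem \ref{RobustToHam}. The only points needing care are bookkeeping ones — that $G'$ stays an oriented graph, which is precisely why $A$ and $B$ must be disjoint, and that the constant hierarchy $1/n\ll\nu'\ll\tau\ll\eta$ together with the side conditions $\tau\le\varepsilon/2=2\alpha-\tfrac34$ and $\eta\le\alpha/2$ can be met simultaneously, which it can since $\tau$ (and then $\nu,\nu'$, and then $1/n$) may be taken as small as we please once $\alpha$, hence $\varepsilon$ and $\eta$, are fixed.
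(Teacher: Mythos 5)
Your argument does not prove the statement in question. The statement is Theorem \ref{RobustToHam} itself --- the result of K\"uhn, Osthus and Treglown that a robust $(\nu,\tau)$-outexpander with $\delta^{\pm}(G)\geq \eta n$ (under the hierarchy $1/n\ll\nu\leq\tau\ll\eta<1$) is Hamiltonian, which the paper imports from \cite{KOT} without proof. Your proposal instead \emph{uses} Theorem \ref{RobustToHam} as a black box, together with Lemma \ref{KO3/8Robust} and Fact \ref{fact1}, to establish the existence of a Hamilton path starting in $A$ and ending in $B$ in a dense oriented graph. As a proof of the stated theorem this is circular: the very result to be proved is the substantive input of your argument. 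What you have actually written is, essentially verbatim, the paper's proof of Lemma \ref{OrientedHamPathBetweenTwoSets} (the added-vertex gadget with $N^+(w)=A$, $N^-(w)=B$, the semidegree check, Lemma \ref{KO3/8Robust} to get robust outexpansion, Fact \ref{fact1} to preserve it under adding one vertex, then Theorem \ref{RobustToHam} and deletion of $w$), which is a different statement.

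A genuine proof of Theorem \ref{RobustToHam} cannot be assembled from the tools in this paper; it requires the machinery of \cite{KOT} (and of the related K\"uhn--Osthus work), namely showing that robust outexpansion plus linear minimum semidegree allows one to absorb all vertices into a closed structure --- via shifted walks relative to a $1$-factor, or the analogous argument based on an almost-decomposition into long paths --- none of which appears in your proposal. So either the target was misidentified, in which case your write-up is a correct reconstruction of the paper's proof of Lemma \ref{OrientedHamPathBetweenTwoSets}, or, taken as a proof of Theorem \ref{RobustToHam}, it has a fatal gap: it assumes its own conclusion.
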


Now we are ready to prove Lemma \ref{OrientedHamPathBetweenTwoSets}.

\textbf{Proof of Lemma \ref{OrientedHamPathBetweenTwoSets}.} Let
$\alpha>3/8$ and let $G$ be an oriented graph on $n$ vertices with
$\delta^{\pm}(G)\geq \alpha n$. Let $A,B\subseteq V(G)$ be two
disjoint subsets of size $|A|=|B|=\alpha n/2$. We wish to show that
$G$ contains a Hamilton path which starts inside $A$ and ends inside
$B$. First, notice that since $\delta^-(G)+\delta^+(G)+\delta(G)\geq
3n/2+\varepsilon n$ (for some small positive constant
$\varepsilon$), by Lemma \ref{KO3/8Robust} we get that for every
choice of constants $0<1/n\ll \nu\ll\tau \leq \varepsilon/2$, $G$ is
a robust $(\nu,\tau)$-outexpander. Second, by adding a new vertex
$x$ to $V(G)$ in such a way that $N^+(x)=A$ and $N^-(x)=B$, by Fact
\ref{fact1} we obtain a new graph $G'$ which is a robust
$(\nu/2,\tau)$-outexpander. Third, by applying Theorem
\ref{RobustToHam} to $G'$ (applied with $\eta=\alpha/2$), we
conclude that $G'$ is Hamiltonian. Last, let $C$ be a Hamilton cycle
in $G'$, by deleting $x$ we obtain the desired Hamilton path in $G$.
{\hfill $\Box$
\medskip}

The following lemma enables us to pick a subgraph of an oriented
graph which inherits some properties of the base graph.

\begin{lemma} \label{PartitioningOriented}
For every $c>0$, for every $0<\varepsilon<c/2$, and for every
sufficiently large integer $n$ the following holds. Suppose that:
\begin{enumerate} [(i)]
\item $G$ is an oriented graph with $|V(G)|=n$, and
\item $d^\pm(v)=(c\pm \varepsilon)n$ for every $v\in V(G)$.
\end{enumerate}
Then there exists a subset $V_0\subseteq V(G)$ of size $n^{2/3}$ for
which the following property holds:

$|N_G^+(v)\cap V_0|\in (c\pm2\varepsilon)|V_0|$ and $|N_G^-(v)\cap
V_0|\in (c\pm2\varepsilon)|V_0|$ for every $v\in V(G)$ $(*)$.

\end{lemma}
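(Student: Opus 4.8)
The plan is to select $V_0$ uniformly at random among all subsets of $V(G)$ of size $m:=n^{2/3}$ and show that with positive probability the property $(*)$ holds simultaneously for all $v\in V(G)$. Fix a vertex $v$. The random variable $X_v^+:=|N_G^+(v)\cap V_0|$ follows a hypergeometric distribution: we are sampling $m$ elements from a ground set of size $n$ containing $d^+(v)=(c\pm\varepsilon)n$ ``good'' elements, so $\mathbb{E}[X_v^+]=\frac{m\,d^+(v)}{n}=(c\pm\varepsilon)m$. I would first observe that the target window in $(*)$ is $(c\pm 2\varepsilon)m$, so it suffices to show that $X_v^+$ deviates from its mean by more than $\varepsilon m$ only with probability $o(1/n)$; the same argument handles $X_v^-$, and then a union bound over the $\le 2n$ events finishes the proof.

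The key step is the tail bound. Since a hypergeometric random variable is at least as concentrated as the corresponding binomial $\mathrm{Bin}(m,(c\pm\varepsilon))$ (this is a standard fact; alternatively one can apply Chernoff-type bounds directly to the hypergeometric distribution), Lemma~\ref{Che} gives
\begin{equation*}
\Pr\bigl(|X_v^+-\mathbb{E}[X_v^+]|>\varepsilon m\bigr)
\le 2\exp\bigl(-\Omega(\varepsilon^2 m)\bigr)
= 2\exp\bigl(-\Omega(\varepsilon^2 n^{2/3})\bigr),
\end{equation*}
and since $\varepsilon$ and $c$ are fixed constants, $\varepsilon^2 n^{2/3}\gg \log n$ for $n$ large, so this probability is $o(1/n)$. (One must be slightly careful that $\varepsilon m$ corresponds to a relative deviation of $\varepsilon/(c\pm\varepsilon)$, which is a fixed positive constant bounded away from $0$ and below $3/2$, so the Chernoff bound applies cleanly in the form stated.) Taking a union bound over all $v\in V(G)$ and over both $X_v^+$ and $X_v^-$ yields that the probability that $(*)$ fails for some vertex is at most $2n\cdot o(1/n)=o(1)$, hence a valid choice of $V_0$ exists.

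I expect the only real subtlety to be justifying the concentration inequality for the hypergeometric distribution rather than the binomial one, since Lemma~\ref{Che} as stated applies to $\mathrm{Bin}(n,p)$. This is routine — one invokes the classical result that hypergeometric variables satisfy the same one-sided Chernoff bounds (for instance via the fact that sampling without replacement is ``more concentrated'' than sampling with replacement, or via Hoeffding's original treatment) — but it is the one point where the proof uses something not literally quoted in the excerpt. Everything else is a direct expectation computation plus a union bound, and the choice $m=n^{2/3}$ is comfortable: it is large enough that $\varepsilon^2 m$ dominates $\log n$, yet the exact polynomial exponent is immaterial as long as it lies strictly between $0$ and $1$.
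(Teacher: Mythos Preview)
Your proposal is correct and matches the paper's proof essentially line for line: choose $V_0$ uniformly at random of size $n^{2/3}$, observe that $|N_G^{\pm}(v)\cap V_0|$ is hypergeometric, apply Chernoff (the paper likewise remarks that Lemma~\ref{Che} holds for the hypergeometric distribution, citing \cite{JLR}), and take a union bound over all vertices. Your anticipated subtlety about extending Chernoff to the hypergeometric case is exactly the one the paper addresses in a parenthetical.
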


\textbf{Proof.} Let $V_0\subseteq V(G)$ be a subset of size
$|V_0|=n^{2/3}$, chosen uniformly at random among all such subsets.
We prove that $V_0$ w.h.p satisfies Property $(*)$.

For this aim, let $v\in V(G)$ be an arbitrary vertex. Since
$|N_G^+(v)\cap V_0|\sim HG(n,n^{2/3},d^+(v))$ and since $d^+(v)\in
(c\pm \varepsilon)n$, by Chernoff's inequality (Lemma \ref{Che} is
also valid for the hypergeometric distribution, see \cite{JLR}) we
have that $\Pr(|N_G^+(v)\cap V_0|\geq (c+2\varepsilon)|V_0|)\leq
e^{-anp}$, for $p=n^{-1/3}$ and for some positive constant
$a=a(\varepsilon)$. Applying the union bound we get that
$$\Pr\Big(\exists v\in V(G) \textrm{ such that } |N_G^+(v)\cap V_0|\geq (c+2\varepsilon)|V_0|\Big)\leq
ne^{-anp}=ne^{-a n^{2/3}}=o(1).$$
In a similar way we prove
it for $|N_G^-(v)\cap V_0|$. This completes the proof.
 {\hfill $\Box$
\medskip}

Last, we need the following simple fact:

\begin{fact} \label{fact2}
Let $G$ be an oriented graph with $|V(G)|=n$ and $\delta^\pm(G)\geq
3n/8$. Then, the directed diameter of $G$ is at most $4$.
\end{fact}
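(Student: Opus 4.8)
The plan is to combine a two-step neighbourhood expansion with a crude edge count. Fix two vertices $u,v\in V(G)$, the case $u=v$ being trivial, and aim to produce a directed $u$--$v$ path of length at most $4$. Let $A$ be the set of vertices reachable from $u$ by a directed path of length at most $2$, that is $A=\{u\}\cup N^+(u)\cup N^+(N^+(u))$, and let $B$ be the set of vertices from which $v$ is reachable by a directed path of length at most $2$, that is $B=\{v\}\cup N^-(v)\cup N^-(N^-(v))$. If $A\cap B\neq\emptyset$, then choosing $w\in A\cap B$ and concatenating a $u$--$w$ walk of length at most $2$ with a $w$--$v$ walk of length at most $2$ gives a directed $u$--$v$ walk of length at most $4$, which contains a directed $u$--$v$ path of length at most $4$. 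So it suffices to prove $|A|+|B|>n$.

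The main step is to show $|A|>n/2$; the bound $|B|>n/2$ then follows by applying the same argument to the reverse oriented graph, which still has minimum semidegree at least $3n/8$ (and in which $B$ plays the role of $A$ with source $v$). Put $W=\{u\}\cup N^+(u)$, so $|W|> 3n/8$ and $W\subseteq A$. The key observation is that every out-edge of $G$ with tail in $W$ has its head in $A$: the out-edges of $u$ land in $N^+(u)\subseteq A$, and the out-edges of a vertex $w\in N^+(u)$ land in $N^+(w)\subseteq N^+(N^+(u))\subseteq A$. Since these edges are pairwise distinct and all have both endpoints in $A$, the number of edges of $G$ inside $A$ is at least $\sum_{w\in W}d^+_G(w)\ge |W|\,\delta^+(G)\ge (3n/8)^2$. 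On the other hand, as $G$ is an oriented graph, the number of edges inside $A$ is at most $\binom{|A|}{2}$. Hence $\binom{|A|}{2}\ge 9n^2/64$, whereas if $|A|\le n/2$ we would get $\binom{|A|}{2}< n^2/8 = 8n^2/64$, a contradiction. Therefore $|A|>n/2$.

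Combining the two halves yields $|A|+|B|>n=|V(G)|$, so $A\cap B\neq\emptyset$, and the directed distance from $u$ to $v$ is at most $4$, as required. I do not anticipate a genuine obstacle: the only points needing a little care are verifying that the edge count is strict enough to force $|A|>n/2$ (this is exactly where the threshold $3/8$ enters, since $9/64>8/64$), checking that the reversed graph inherits the semidegree hypothesis, and recalling that a directed walk of length at most $4$ always contains a directed path of length at most $4$.
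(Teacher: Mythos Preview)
Your proof is correct and proceeds along a different route from the paper's. The paper works with the one-step neighbourhoods: setting $A\subseteq N^+(x)$ and $B\subseteq N^-(y)$ of size $3n/8$ each and assuming $A\cap B=\emptyset$, it uses averaging to pick $a\in A$ with $d^+(a,A)\le |A|/2$ (hence at least $3n/16$ out-neighbours outside $A$) and $b\in B$ with $d^-(b,B)\le |B|/2$, and then argues that if $N^+(a)\cap B$, $N^-(b)\cap A$ and $N^+(a)\cap N^-(b)$ were all empty, the four disjoint sets $A,B,N^+(a)\setminus A,N^-(b)\setminus B$ would have total size at least $3n/8+3n/8+3n/16+3n/16>n$. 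You instead bound the whole two-step out-ball directly via an edge count: all out-edges from $W=\{u\}\cup N^+(u)$ land inside the two-ball $A$, giving at least $(3n/8)^2=9n^2/64$ edges there, which exceeds $\binom{n/2}{2}<n^2/8$ and forces $|A|>n/2$. Your argument is cleaner and more symmetric---it reduces the whole statement to a single expansion estimate and then a pigeonhole---while the paper's is more hands-on, actually exhibiting a concrete short path through the chosen vertices $a,b$. Both extract the threshold $3/8$ from essentially the same arithmetic: $9/64>1/8$ in yours, $3/8+3/8+3/16+3/16>1$ in the paper's.
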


\textbf{Proof.} Let $x,y\in V(G)$. We wish to prove that there
exists a path $P$ of length at most $4$ which is oriented from $x$
to $y$. Let $A\subseteq N_G^+({x})$ and $B\subseteq N_G^-({y})$ two
subsets of size $|A|=|B|=3n/8$. If $A\cap B\neq \emptyset$ then we
are done. Otherwise, let $a\in A$ be a vertex for which
$d^+(a,A)\leq |A|/2$ (there must be such a vertex since $\sum_{z\in
A}d^+(z,A)\leq \binom{|A|}{2}$), and let $b\in B$ be a vertex for
which $d^-(b,B)\leq |B|/2$. The result will follow by proving that
$N_G^+(a)\cap B\neq \emptyset$, $N_G^+(a)\cap N_G^-(b)\neq
\emptyset$ or $N_G^-(b)\cap A\neq \emptyset$. Indeed, otherwise we
get that $|V(G)|=n\geq 2+|A|+|B|+|A|/2+|B|/2\geq
3n/8+3n/8+3n/16+3n/16>n$, which is a contradiction. {\hfill $\Box$
\medskip}

\section{Counting Hamilton cycles in undirected graphs}

In this section we prove Proposition \ref{warmup} and Corollaries
\ref{warmup2} and \ref{almostregular}.

\textbf{Proof of Proposition \ref{warmup}.} Let $H\subseteq G$ be a
$\textrm{reg}(G)$-factor of $G$. By Theorem
\ref{regularsubgraphdense} we have that $\textrm{reg}(G)=\Theta(n)$.
Therefore, we can apply Corollary \ref{2factor} and conclude that
$\sum_{s\leq s^*} f(H,s)\geq
\left(\frac{\textrm{reg}(G)}{e}\right)^n\left(1-o(1)\right)^n$
(where $s^*=\sqrt{n \ln n}$ and $f(H,s)$ counts the number of $(\leq
2)$-factors of $H$ with exactly $s$ cycles).

Now, working in $G$, given a $(\leq 2)$-factor $F$ with $s\leq s^*$
cycles, by repeatedly applying Lemma \ref{rotations-dense} we can
turn $F$ into a Hamilton cycle of $G$ by adding and removing at most
$O(s)$ edges in the following way: let $C$ be a non-Hamilton cycle
in $F$. If we can find vertex $v\in V(C)$ and a vertex $u\in
V(G)\setminus V(C)$ for which $vu\in E(G)$, then by deleting the
edge $vv^+$ from $C$ (and doing nothing in case $C$ is a cycle of
length two) we get a path $P$ which can be extended by the edge
$vu$. Connecting it to a cycle $C'$ which contains $u$ ($C'$ can be
just an edge) we obtain a longer path $P'$. Repeat this argument as
long as we can. If there are no edges between the endpoints of the
current path $P'$ and the other cycles from $F$, then we can use
Lemma \ref{rotations-dense} in order to turn $P'$ either into a
Hamilton cycle (and then we are done) or into a path $P^*$ for which
$V(P^*)=V(P)$ and for which there exists an edge between one of its
endpoints to $V(G)\setminus V(P^*)$. This can be done within $4$
edge replacements and we then extend the path using such an edge.
Note that in each such step we invest at most $4$ edge replacements
in order to decrease the number of cycles by $1$, and unless the
current cycle is a Hamilton cycle, we can always merge two cycles.
Therefore, after $O(s)$ edge-replacements we get a Hamilton cycle.

In order to complete the proof, note that given a Hamilton cycle $C$
in $G$, by replacing at most $k$ edges we can get at most
$\binom{n}{k}(2k)^{2k}$ $2$-factors in $H$ (choose $k$ edges of $C$
to delete, obtain at most $k$ paths which need to be turned into a
$2$-factor by connecting endpoints of paths; for each endpoint we
have at most $2k$ choices of other endpoints to connect it to).
Therefore, for some positive constant $D$ we have that $\sum_{s\leq
s^*}f(G,s)\leq h(G)\cdot s^*\binom{n}{Ds^*}(2Ds^*)^{2Ds^*} \leq
h(G)n^{O(s^*)}$. This implies that
\begin{eqnarray*}
h(G) &\geq& (1-o(1))^n \left(\frac{\textrm{reg}(G)}{e}\right)^n
n^{-O(s^*)}= (1-o(1))^n\left(\frac{\textrm{reg}(G)}{e}\right)^n,
\end{eqnarray*}
and completes the proof of Proposition \ref{warmup}.{\hfill $\Box$
\medskip\\}

Corollary \ref{warmup2} follows easily from Proposition
\ref{warmup}.

\textbf{Proof of Corollary \ref{warmup2}.} Let $A$ be the adjacency
matrix of $G$. Then $A$ is an $n\times n$ matrix with all entries
$0$'s and $1$'s which is $cn$-regular (the number of $1$'s in each
row/column is exactly $cn$). Since $G$ is $cn$-regular, it follows
that $\textrm{reg}(G)=cn$. Therefore, since $cn\geq n/2$, by
Proposition \ref{warmup} we have
$$h(G)\geq \left(\frac{cn}{e}\right)^n(1-o(1))^n.$$
For the upper bound, note that since the number of Hamilton cycles
in $G$, $h(G)$, is at most the number of $(\leq 2)$-factors in $G$,
which is the permanent of $A$, using Theorem \ref{Bregman} we get
that
$$h(G)\leq per(A)\leq \left((cn)!\right)^{1/c}=(1+o(1))^n\left(\frac{cn}{e}\right)^n.$$
This completes the proof. {\hfill $\Box$\medskip\\}

The proof of Corollary \ref{almostregular} follows quite immediately
from the previous proof and Corollary \ref{cor:AlmostRegtoReg}.

\textbf{Proof of Corollary \ref{almostregular}.} Let $c>1/2$, let
$0<\varepsilon<1/9$ be such that
$c-\varepsilon-\sqrt{\varepsilon}>1/2$, and let $G$ be a graph
satisfies the assumptions of the corollary. For the upper bound on
$h(G)$, a similar calculation as in the proof of Corollary
\ref{warmup2} will do the work. For the lower bound, note that by
applying Corollary \ref{cor:AlmostRegtoReg} to $G$, one can find a
subgraph $G'\subseteq G$ which is $(c-\varepsilon')n$ regular, where
$\varepsilon'=\varepsilon+\sqrt{\varepsilon}$. Apply now Propsition
\ref{warmup} to $G'$ gives the lower bound.{\hfill $\Box$\medskip\\}

\section{Counting Hamilton cycles in oriented graphs}

In this section we prove Theorem \ref{CountingHamOriented}.

{\bf Proof of Theorem \ref{CountingHamOriented}.} Let $c>3/8$ and
let $\eta>0$. Let $\varepsilon_0>0$ be a sufficiently small constant
which satisfies $4(c-\varepsilon_0)n'>3n'/2+\varepsilon_0 n$ for
each $n'\geq 0.9n$ (the existence of such $\varepsilon_0$ follows
from the fact that $c>3/8$ and that $n$ is sufficiently large).

Next, note that for a given directed graph $G$ on $n'\geq \ell$
vertices with $\delta^{\pm}(G)\geq (c-\varepsilon_0)n'$, and for
each choice of $\nu,\tau$ satisfying $0<1/n'\ll\nu\ll\tau\leq
\varepsilon_0/2\leq 1$, since $\delta^+(G)+\delta^-(G)+\delta(G)\geq
4(c-\varepsilon_0)n'>3n'/2+\varepsilon_0 n$, it follows by Lemma
\ref{KO3/8Robust} that $G$ is a robust $(\nu,\tau)$-expander. Now,
let $\tau$ be a constant obtained by applying Theorem
\ref{RobustRFactor} with $\alpha=c$ and $\eta$, and let $\nu\ll
\tau$ (recall that $0<1/n\ll \nu \ll \tau \ll \alpha<1$). We obtain
a positive constant $\gamma$ and a positive integer $n_0$ for which
the following holds: for every oriented graph $G$ with
$|V(G)|=n'\geq n_0$, if $d^\pm(v)\in (c\pm \gamma)n'$ for every
$v\in V(G)$, then $G$ contains a $(c-\eta)n'$-factor.

Now, let $G$ be an oriented graph on $n$ vertices, where $n$ is such
that $n':=n-n^{2/3}\geq n_0$. Moreover, assume that in $G$ we have
$d^{\pm}(v)\in (c\pm \varepsilon)n$ for every $v\in V(G)$, where
$\varepsilon=\min\{\gamma/3,\varepsilon_0/3\}$. By applying Lemma
\ref{PartitioningOriented} to $G$ we find a subset $V_0\subset V(G)$
of size $|V_0|=n^{2/3}$ for which $|N_G^+(v)\cap V_0|\in (c\pm
2\varepsilon)|V_0|$ and $|N_G^-(v)\cap V_0|\in
(c\pm2\varepsilon)|V_0|$ for every vertex $v\in V(G)$. Let
$G_1=G[V_0]$ and $G_2=G[V(G)\setminus V_0]$ denote the two subgraphs
induced by $V_0$ and $V(G)\setminus V_0$, respectively. Note that
since $n':=|V(G_2)|=n-n^{2/3}$ and since $\varepsilon\leq \gamma/3$,
it follows that $d_{G_2}^{\pm}(v)\in (c\pm\gamma)n'$ holds for each
$v\in V(G_2)$. In addition, since $\epsilon\leq \varepsilon_0/3$, it
follows that $d_{G_2}^{\pm}(v)\in (c\pm\varepsilon_0)n'$ holds for
each $v\in V(G_2)$, and therefore, using Lemma \ref{KO3/8Robust} we
conclude that $G_2$ is a robust $(\nu,\tau)$-expander. Therefore, by
applying Theorem \ref{RobustRFactor} to $G_2$ we conclude that $G_2$
contains a $(c-\eta)n'$-factor $H$.

Next, assume that $V(G_2)=[n']$ and let $A$ be an $n'\times n'$
matrix with all entries $0$'s and $1$'s for which $A_{ij}=1$ if and
only if $ij\in E(H)$. $A$ is clearly $(c-\eta)n'$-regular and recall
that $(c-\eta)n'=(1-o(1))(c-\eta)n$. Therefore, by Lemma
\ref{NotTooManyCycles} it follows that there are at least
$\left(\frac{(c-\eta)n}{e}\right)^n(1-o(1))^n$ permutations
$\sigma\in S_{n'}$ such that $A\geq A(\sigma)$ and such that
$\sigma$ contains at most $s^*:=\sqrt{n \ln n}$ cycles in its cyclic
form. Note that every such permutation corresponds to a $1$-factor
of $G_2$ with at most $s^*$ many cycles, and therefore, since all
the degrees in $V_0$ are larger than $\frac 38 |V_0|$ we obtain that
$G_1$ contains a Hamilton cycle (using \cite{KKO}) and we have that
$$\sum_{s\leq s^*+1}f(G,s)\geq \sum_{s\leq
s^*}f(G_2,s)\geq\left(\frac{(c-\eta)n}{e}\right)^n(1-o(1))^n,$$
where $f(G,s)$ denote the number of $1$-factors of $G$ with exactly
$s$ cycles.

Now, given a $1$-factor $F$ of $G_2$, we wish to turn it into a
Hamilton cycle of $G$ by changing at most $O(n^{2/3})$ edges. This
can be done as follows: Let $C$ be a cycle in $F$. Since $G_2$ is
strongly connected (follows for example from Fact \ref{fact2}) we
can find a vertex $v\in V(C)$ and a vertex $u\in V(G_2)\setminus
V(C)$ for which $vu\in E(G)$. Deleting the edge $vv^+$ from $C$ we
get a path $Q$ which can be extended to a longer path $Q'$ by adding
the edge $vu$ and all edges of the cycle $C'$ in $F$ including $u$
apart from $u^-u$. Let $x$ and $y$ be the endpoints of the current
path $Q'$ (from $x$ to $y$). Using the subgraph $G_1$, we can close
$Q'$ into a cycle, using at most $6$ additional edges. Indeed, by
Lemma \ref{PartitioningOriented} $x$ has an in-neighbor and $y$ has
an out-neighbor in $V_0$ and by Fact \ref{fact2} $y$ can be
connected to $x$ (in $G_1$) by a directed path of length at most
$4$. Delete from $G_1$ the edges and vertices we used to close $Q'$.
Update $F$ by replacing $C$ and $C'$ by the newly created cycle.
Repeat this argument until we have a cycle $C$ with $V(G_2)\subseteq
V(C)$. Note that during this process we constantly change $G_1$ and
$G_2$ (we use vertices of $G_1$ in order to connect vertices from
$G_2$ and then move them into $G_2$ and repeat until a Hamiltonian
cycle is obtained). So far, we have invested $O(s^*)$ edge
replacements and have deleted at most $O(s^*)=o(|V_0|)$ vertices
from $G_1$. Hence, $G_1$ (minus all the edges/vertices deleted so
far) still satisfies $(i)$ and $(ii)$ of Lemma
\ref{OrientedHamPathBetweenTwoSets} with respect to some
$\alpha>3/8$. Deleting an arbitrary edge $vu$ from $C$, we obtain a
path $P$ with $v,u$ as its endpoints. Next, choose disjoint sets $A
\subset N^+_G(v)\cap V_0$ and $B \subset N^-_G(u)\cap V_0$, each of
size at least $(c-\eta)|V_0|/2$. Using Lemma
\ref{PartitioningOriented}, and applying Lemma
\ref{OrientedHamPathBetweenTwoSets} with respect to $A=N^+_G(v)\cap
V_0$ and $B=N^-_G(u)\cap V_0$ we obtain a Hamilton path $P'$ of
$G_1$ which starts inside $A$ and ends inside $B$. This path
together with $P$ forms a Hamilton cycle of $G$. Note that this
cycle was obtained from $F$ by changing $O(n^{2/3})$ edges and
vertices.

In order to complete the proof, we need to show that by performing
this transformation we do not get the same Hamilton cycle too many
times. For this aim we first note that given a Hamilton cycle $C$ in
$G$, by replacing at most $k$ edges we can get at most
$\binom{n}{k}(2k)^{2k}$ $1$-factors. Indeed, we need to choose $k$
edges of $C$ to delete, we obtain at most $k$ paths which need to be
turned into a $1$-factor by connecting their endpoints; for each
endpoint we have at most $2k$ choices of other endpoints to connect
it to. Therefore, since in the whole process we changed $O(n^{2/3})$
edges, for some positive constant $D$ we have that $\sum_{s\leq
s^*}f(G,s)\leq h(G)\cdot
s^*\binom{n}{Dn^{2/3}}(2Dn^{2/3})^{2Dn^{2/3}} \leq
h(G)n^{O(n^{2/3})}$. This implies that
\begin{eqnarray*}
h(G) &\geq& \left(\frac{(c-\eta)n}{e}\right)^n(1-o(1))^n
n^{-O(n^{2/3})} =(1-o(1))^n \left(\frac{(c-\eta)n}{e}\right)^n,
\end{eqnarray*}
which proves the lower bound on $h(G)$.

For the upper bound, note that since the number of Hamilton cycles
in $G$, $h(G)$, is at most the number of $1$-factors in $G$, using
Theorem \ref{Bregman} and the fact that $d^{\pm}(v)\in (c\pm \eta)n$
for every $v\in V(G)$, we get that
$$h(G)\leq \#\text{ of 1-factors}=per(A)\leq \left(((c+\eta)n)!\right)^{1/(c+\eta)}=(1+o(1))^n\left(\frac{(c+\eta)n}{e}\right)^n.$$
This completes the proof. {\hfill $\Box$\medskip\\}

\section{Packing Hamilton cycles in undirected graphs}

In this section we prove Theorem \ref{AppRegConj}.

\textbf{Proof of Theorem \ref{AppRegConj}.} Let $\varepsilon>0$ and
let $G$ be a graph with minimum degree $\delta(G)\geq
(1/2+\varepsilon)n$. Let $\varepsilon'<\min\{\varepsilon,1/160\}$ be
a positive constant, let $H\subset G$ be an auxiliary subgraph of
$G$ obtained by applying Lemma \ref{RotationsGraph} to $G$ with
$\varepsilon'$ and $\alpha=(\varepsilon')^3$, and let $G'=G-H$.
Recall that by $(P1)$ of Lemma \ref{RotationsGraph}, $G'$ is
$r$-regular for some even integer $r$ which satisfies
$$r\geq
(1-\varepsilon'/2)\reg(G)\geq (1-\varepsilon/2)\reg(G).$$ Since
$(1-\varepsilon/2)^2\geq 1-\varepsilon$, the result will then follow
by proving that $G$ contains at least $(1-\varepsilon/2)r/2$ edge
disjoint Hamilton cycles.

To this end we first note that since $\delta(G)>n/2$, it follows
from Theorem \ref{regularsubgraphdense} that $r=\Theta(n)$.
Therefore, we can use Lemma \ref{Real2Factor} repeatedly (starting
with $\alpha=r/n$ and until the last time we have $\alpha\geq
\varepsilon r/(2n)$) in order to find $m=(1-\varepsilon/2)r/2$
edge-disjoint $2$-factors of $G'$, $\{F_1,\ldots,F_m\}$, each of
them containing at most $s^*=\sqrt{n \ln n}$ cycles, each of which
of length at least $3$. Note that by removing such a factor from an
$r'$-regular graph, the obtained graph is $(r'-2)$-regular, and
therefore one can apply Lemma \ref{Real2Factor} over and over. Now,
we wish to turn each of the $F_i$'s into a Hamilton cycle $H_i$,
using the edges of $G\setminus (H_1\cup\ldots \cup H_{i-1}\cup
F_{i+1}\cup \ldots \cup F_{m})$. For this goal, we make an extensive
use of Lemma \ref{LongPathToCycle} and the properties of the
auxiliary graph $H$.

Assume inductively that we have built edge-disjoint Hamilton cycles
$H_1,\ldots,H_{i-1}$, which are edge disjoint from
$F_{i},\ldots,F_m$, and that the current graph $G_i=G\setminus
(H_1\cup\ldots \cup H_{i-1}\cup F_i\cup \ldots \cup F_{m})$
satisfies $\emph{(2)}$ and $\emph{(3)}$ of Lemma
\ref{LongPathToCycle} with $\varepsilon'$. Moreover, assume that
each of the $H_j$'s has been created from $F_j$ by replacing
$O(s^*)$ edges. Note that for $i=0$, since $H$ is a subgraph of
$G_0$, it follows that $G_0$ satisfies $\emph{(2)}$ and $\emph{(3)}$
of Lemma \ref{LongPathToCycle}. Now, starting with $F_i$, using the
fact that $G_i$ satisfies $\emph{(2)}$ and $\emph{(3)}$ of Lemma
\ref{LongPathToCycle} (the induction hypothesis), by repeatedly
applying this lemma, one can turn $F_i$ into a Hamilton cycle by
using $O(s^*)$ edge replacements. This is done in a similar way as
in the proof of Proposition \ref{warmup}. Now, note that during the
procedure, every edge that we delete from $F_i$ is added back to
$G_i$ and therefore the minimum degree of $G_i$ remains the same and
therefore $G_i$ satisfies $\emph{(2)}$ of Lemma
\ref{LongPathToCycle}. Since this procedure takes $O(s^*)$ edge
replacements each time and since there are $\Theta(n)$ factors to
work on, the total number of edges deleted (or replaced) from $G_0$
(and in particular, from $H$) is at most $O(ns^*)=o(n^2)$. Thus,
since $H$ satisfies $(P3)$ and $(P4)$ of Lemma \ref{RotationsGraph},
using the fact that $n$ is sufficiently large, the graph $G_{i}$
also satisfies $\emph{(3)}$ of Lemma \ref{LongPathToCycle}, which
therefore can be further applied. This completes the proof. {\hfill
$\Box$
\medskip\\}

\section{Concluding remarks}

We presented a general approach, based on permanent estimates, for
counting and packing Hamilton cycles in dense graphs and oriented
graphs. Using this method we derived some known results in a simpler
way and proved some new results as well. In particular, we showed
how to apply our technique to find many edge-disjoint Hamilton
cycles in dense graphs.

It would be interesting to decide whether our approach can be also
used to find many edge-disjoint Hamilton cycles in dense oriented
graphs. The main obstacle here is that apparently there is no good
analog of P\'osa's rotation extension technique for digraphs.

In Proposition \ref{warmup} we obtained a lower bound on $h(G)$ in
terms of $\reg(G)$, for a Dirac graph $G$. For graphs which are not
close to being regular our result is worse than the result of
Cuckler and Kahn in \cite{CK}. It would be very interesting to try
and approach their result using our method.

Another natural question is to obtain a variant of Theorem
\ref{CountingHamOriented} for non-regular oriented graphs similar to
the result of Cuckler and Kahn for the non-oriented case. The goal
here is to estimate the minimum  number of Hamilton cycles in an
oriented graph on $n$ vertices with semi-degree $\delta^{\pm}(G)\geq
(3/8+o(1))n$. Observe that our technique allows to prove easily that
an oriented graph $G$ on $n$ vertices with $\delta^{\pm}(G)\geq
(3/8+\varepsilon)n$ contains at least $\left(\frac{\varepsilon
n}{3e}\right)^n$ Hamilton cycles. Indeed, applying repeatedly the
result of  Keevash, K\"uhn and Osthus \cite{KKO} we can extract
$\frac{\varepsilon n}{2}$ edge-disjoint Hamilton cycles in such
graph, whose union is an $\frac{\varepsilon n}{2}$-factor $F$ in
$G$. The rest of the proof is quite similar to our argument in
Theorem \ref{CountingHamOriented}. This establishes  a weak(er)
version of the result of S\'ark\"ozy, Selkow and  Szemer\'edi
\cite{SSS} for the oriented case.

Finally it would be also nice to extend the result of Keevash,
K\"uhn and Osthus \cite{KKO} and determine the number of edge
disjoint Hamilton cycles that oriented graphs with
$\delta^{\pm}(G)\geq 3n/8$ must contain as a function of
$\delta^{\pm}(G)$.

{\bf Acknowledgment.} We would like to thank the anonymous referees
for many valuable comments.

\end{document}